\documentclass[11pt,a4paper,reqno]{amsart}

 \usepackage{cases,hyperref} \usepackage{xcolor,appendix,geometry}
\usepackage{mathrsfs}
\usepackage{epsfig}
\usepackage{fp,ifthen}
\usepackage{float}
\usepackage{graphicx}
\usepackage{enumitem}
\usepackage{esint}
\usepackage{accents}
\usepackage{cite}
\usepackage{mathtools}
\usepackage{amsmath,amssymb,amsthm,esint,empheq,etoolbox}

\title[${ \Gamma}$-convergence of convolution-type functionals]{${\bf \Gamma}$-convergence of convolution-type functionals for free discontinuity problems}

\author[G.C.\! Brusca]{Giuseppe C. Brusca}
\author[D.\! Donati]{Davide Donati}
\author[S.\! Scalabrino]{Sergio Scalabrino}
\author[C.\!\! Trifone]{Chiara Trifone}
\author[E.\!\! Voglino]{Edoardo Voglino}
\newcommand{\m}{\mathfrak{m}}
\newcommand{\e}{\varepsilon}
\newcommand{\R}{\mathbb R}

\newcommand{\Rd}{\R^d}
 
\newcommand{\Rmd}{\R^{m\times d}}
\newcommand{\Rm}{\R^m}
\newcommand{\Sd}{\mathbb{S}^{d-1}}
\newcommand{\A}{\mathcal{A}}

\newcommand{\Ld}{\mathcal{L}^d}
\newcommand{\Hd}{\mathcal{H}^{d-1}}
\newcommand{\N}{\mathbb{N}}

\newcommand{\dx}{{\rm d}}
\mathchardef\emptyset="001F

\newcommand{\mres}{\mathbin{\vrule height 1.6ex depth 0pt width
0.13ex\vrule height 0.13ex depth 0pt width 1.3ex}}
\newcommand\restr[2]{{
  \left.\kern-\nulldelimiterspace 
  #1 \vphantom{\big|} 
  \right|_{#2} }}

\newcommand{\eps}{\varepsilon}

\newtheorem{Theorem}{Theorem}[section]
\newtheorem{Corollary}[Theorem]{Corollary}

\newtheorem{Proposition}[Theorem]{Proposition}
\newtheorem{Lemma}[Theorem]{Lemma}

\theoremstyle{definition}
\newtheorem{Remark}[Theorem]{Remark}

\numberwithin{equation}{section}

\let\phi\varphi
\begin{document}
\begin{abstract}
\smallskip
We prove compactness with respect to $\Gamma$-convergence  for a general class of non-local energies modelled after  the ones considered in [Gobbino, CPAM (1998)]. We give an integral representation result for the limits, which are free discontinuity functionals defined on the space of generalised special functions of bounded variation. We then characterise the bulk and surface energy densities of the obtained limits by means of minimisation problems on small cubes for the approximating energies.

\vspace{0.5 cm}
\noindent {\bf MSC codes:} 49J45, 49Q20,  65K10

\noindent {\bf Keywords:} Free discontinuity problems, $\Gamma$-convergence, non-local functionals
\end{abstract}

\maketitle

\section{Introduction}

The necessity of treating numerically 
minimisation problems related to free discontinuity functionals has stimulated the development of a wide literature of 
approximation results, which offer insights on the structure of minimisers to such problems. Indeed, although general compactness and lower semicontinuity theorems due to Ambrosio \cite{AmbrosioExistence,AmbrosioLower} (see also  \cite{FeldmanStinson,FriedrichCompactness}) allow to prove the existence of solutions to minimisation problems, very little can be inferred on the properties of the obtained minimisers relying on these arguments alone.  
 In this setting, the prototypical example is the {\it Mumford-Shah} functional, which for $\Omega\subset \Rd$ open and bounded and for $u\in {\rm GSBV}(\Omega)$ (see \cite{DeGiorgiAmbrosio} and \cite[Chapter~4]{AFP}) takes the form  
 \begin{equation}\label{intro Mumford Shah}
    \int_\Omega|\nabla u|^2\,{\rm d}x+\Hd({J_u}).
\end{equation}
Here, $\nabla u$ is the {\it approximate gradient} of $u$ and $\Hd(J_u)$ denotes the $(d-1)$-dimensional Hausdorff measure of the {\it jump set} $J_u$. 

 A natural approach is to approximate, in a variational sense, the  functionals under consideration. This can be achieved in a remarkably wide variety of fashions, all sharing a common feature: the replacement of the surface part of the energy, which is in general very hard to treat, by means of a computationally more favourable expression.

A first result in this direction  was obtained in \cite{Tortorelli} by Ambrosio and Tortorelli, who proposed to introduce an additional phase-field variable to approximate \eqref{intro Mumford Shah} by means of elliptic energies.  This result was later employed by Bellettini and Coscia \cite{BellettiniCoscia} to provide a discrete approximation of \eqref{intro Mumford Shah}, obtained by replacing the elliptic functionals considered in \cite{Tortorelli} with discrete sums. 
Other approximations involving discretisation procedures have then been considered, for instance, by Chambolle \cite{ChambolleImage} and Braides and Gelli \cite{BraGeConvex,BraidesGellMMS}. A different strategy was adopted by  Alicandro, Braides, and Gelli \cite{AliBraGelli} (see also \cite{AliGelli}) and more recently by Solci \cite{SolciInterp}, who used singular perturbations of integer order $k\geq 2$ of non-convex energies to approximate 
\begin{equation}\notag 
    \int_\Omega|\nabla u|^2\,{\rm d}x+\int_{J_u}|[u]|^{1/k}\,{\rm d}\Hd,
\end{equation}
where  $[u]:=u^+-u^-$ is the difference of the unilateral traces $u^+$ and $u^-$ of $u$ on $J_u$. 

Several non-local approximations of the Mumford-Shah functional have been proposed.    In \cite{BraidesDalMaso}  Braides and Dal Maso  considered functionals of the form
\begin{equation}\label{braides-dalmaso}
    \frac{1}{\e}\int_\Omega f\Bigl(\e\fint_{\Omega\cap B_\e(x)} |\nabla u(y)|^2\, \dx y\Bigr)\, \dx x, \qquad u\in H^1(\Omega),
\end{equation}
where $f$ is a non-negative increasing continuous function such that
\begin{equation*}
    \lim _{t\to 0^+} \frac{f(t)}{t}=a, \qquad \lim_{t\to+\infty} f(t)=b 
\end{equation*}
for some $a,b>0$; see also \cite{BraidesGarroni,CortesaniARMA,CortesaniToaderSIAM,LussardiVitali1,LussardiVitali2} for generalisations to the ${\rm BV}$ setting and \cite{Negri,MarzianiSolombrino,ScillaSolombrino} for related works in the setting of functions of {\it bounded deformation}.

 A different approach was pursued by Gobbino, who proved that \eqref{intro Mumford Shah} could be approximated by sequences of convolution-type functionals depending on difference quotients, as conjectured by De Giorgi in 1996. More precisely, in \cite{Gobbino} the author proves that, up to multiplicative constants and assuming some regularity on $\Omega$, the sequence of functionals defined by 
\begin{equation}\label{intro de giorgi}
    \frac{1}{\e^{d+1}}\int_{\Omega}\int_{\Omega}\arctan\Big(\frac{|u(x)-u(y)|^2}{\e}\Big)e^{-|\frac{x-y}{\e}|^2}\,{\rm d}x\,{\rm d}y, \qquad u\in L^1_{\rm loc}(\Omega),
\end{equation}
$\Gamma$-converges as $\e\to 0^+$  in the $L^1_{\rm loc}(\Omega)$ topology to the Mumford-Shah functional. This is obtained by a careful analysis of the one-dimensional case, in which the convolution kernel does not appear and \eqref{intro de giorgi} takes a simpler form. Such result is then extended to the $d$-dimensional case using slicing techniques. In higher dimensions, the convolution kernel is introduced in order to avoid the anisotropies that one would obtain if only increments with respect to the coordinate axes were considered. No particular role is played by the specific choice of the integrand and of the convolution kernel appearing in \eqref{intro de giorgi}, and the arguments of \cite{Gobbino} are suitable also to treat functionals given by
\begin{equation}\label{intro de giorgi 2}
    \frac{1}{\e^{d}}\int_{\Omega}\int_{\Omega}\min\Big\{\frac{|u(x)-u(y)|^2}{\e^2},\frac{1}{\e}\Big\}\rho\Big(\frac{|x-y|}{\e}\Big)\,{\rm d}x\,{\rm d}y,
\end{equation}
for $\rho\colon\R\to [0,+\infty)$ any smooth kernel with compact support (see \cite[Section~7]{Gobbino} and \cite[Chapter~5]{BraidesApproximation}). This result was later adapted by Chambolle \cite{ChambolleDiscretGobbino}, who proved a discrete version of De Giorgi's conjecture.

Gobbino and Mora in \cite{GobbinoMora} further discussed a variant of 
\eqref{intro de giorgi} in which more general convex-concave integrands $\phi_\e\colon [0,+\infty)\to[0,+\infty)$ are considered and slightly different scaling conditions are taken into account.  
The choice of the integrands $\{\phi_\e\}_\e$  determines the expression of a free discontinuity functional appearing in the limit, which is of the form 
\begin{equation}
\label{Gob-Mora fun}
    \int_\Omega f_{\rm bulk}(|\nabla u|)\,{\rm d}x+\int_{J_u}f_{\rm surf}(|[u]|)\,{\rm d}\Hd,
\end{equation}
for suitable integrands $f_{\rm bulk}$ and $f_{\rm surf}$, with $f_{\rm surf}$ that may also be superlinear near $0$.
Moreover, the authors prove that, for specific choices of $f_{\rm bulk}$ and $f_{\rm surf}$, the functional \eqref{Gob-Mora fun} can be approximated via non-local energies associated to appropriately chosen integrands $\{\phi_\e\}_\e$, for which they provide an explicit expression. 

A further variant of  Gobbino's result was recently considered in \cite{AlmiDavoliKT} by Almi, Davoli, Kubin, and Tasso, who studied  a vectorial analogue of \eqref{intro de giorgi} and showed its $\Gamma$-convergence to the Griffith functional, whose domain is a subspace of ${ \rm GSBD}(\Omega)$ (see \cite[Definition~4.2]{DalJems}) and whose expression can be formally obtained by replacing  in \eqref{intro Mumford Shah} the approximate gradient $\nabla u$ with the approximate symmetric gradient $\mathcal{E} u$.

The aim of this paper is to prove $\Gamma$-compactness and integral representation results for a general class of non-local functionals modelled after \eqref{intro de giorgi 2}. 
We study functionals defined for $u\in L^1_{\rm loc}(\Omega;\Rm)$ by
\begin{equation}\label{intro funzionali}
F_\e(u):=\frac{1}{\e^d}\int_{\Omega}\int_{\Omega}f_\e\Big(x,\frac{y-x}{\e},\frac{u(y)-u(x)}{\e}\Big)\,{\rm d}x\,{\rm d}y,
\end{equation}
where the integrands
$f_\e\colon\Rd\times \Rd\times \Rm\to [0,+\infty)$ satisfy suitable assumptions and growth conditions making the resulting energies comparable with \eqref{intro de giorgi 2} (see Sections \ref{section : functionals} and \ref{sec: Notation}). 
 To some extent, our energies may be regarded as a continuum counterpart  of those considered in \cite{BachBraidesCicalese} (see also the related work \cite{AlicandroBraidesCicaleseSolci}), where the authors detect sufficient conditions implying the $\Gamma$-convergence of certain discrete energies to free discontinuity functionals as in  \eqref{intro integral representation} below.

The first main result we obtain is  Theorem \ref{thm:compactness}, which states that, given a  sequence $\{\e_n\}_{n}$ converging to $0^+$ as $n\to+\infty$, the sequence $\{F_{\e_n}\}_{n}$ given by \eqref{intro funzionali} admits a subsequence that $\Gamma$-converges in the $L^1_{\rm loc}$-topology to a functional $F$, which for $u\in {\rm GSBV}^2(\Omega;\Rm)\cap L^1_{\rm loc}(\Omega;\Rm)$ takes the form
\begin{equation}\label{intro integral representation}
   F(u)= \int_\Omega f_{\rm bulk}(x,\nabla u)\,{\rm d}x + \int_{J_u}f_{\rm surf}(x,[u],\nu_u)\,{\rm d}\Hd,
\end{equation}
where the integrands $f_{\rm bulk}$ and $f_{\rm surf}$ are determined by auxiliary minimisation problems on small cubes.

In Section \ref{Section: Compactness} we use the localisation method for $\Gamma$-convergence (see \cite[Section~3.3]{braides2006handbook} or \cite[Chapter~18]{DalBook}) to prove $\Gamma$-compactness. The starting point of this technique is introducing  the set functions $U\mapsto F_\e(u,U)$, defined on the class of open subsets of $\Omega$ 
by a suitable localisation of the functionals \eqref{intro funzionali}.  The second step requires to produce a non-local version of the so called {\it fundamental estimate} (see \cite[Chapter~18]{DalBook}), whose proof, in our case, crucially relies on an argument, borrowed from \cite{AABPT} and developed in Section \ref{section: auxiliary}, that allows us to consider only finite-range interactions in \eqref{intro funzionali}. Finally, exploiting the fundamental estimate we prove that, if $U$ is sufficiently regular, up to subsequences $\{F_{\e_n}(\cdot, U)\}_{n}$  $\Gamma$-converges in $L^1_{\rm loc}(\Omega;\Rm)$ to 
a certain  $F(\cdot,U)$ and that this limit functional can be extended to a Borel measure in the set variable. We point out that if $U$ is not regular, the $\Gamma$-convergence may fail also for the reference functionals \eqref{intro de giorgi 2} (see \cite[Remark~7.1]{Gobbino}).

We will then exploit in Section \ref{sec: integral representation} the integral representation on ${\rm SBV}^2$ of Bouchitté, Fonseca, Leoni, and Mascarenhas \cite{BFLM2002}.
However,  we cannot directly apply the results of \cite{BFLM2002} to the limit functional $F$. Indeed, those results require in particular the existence of a constant $C>0$ such that
\begin{equation*}
    C\Bigl(\int_{U}|\nabla u|^2\,{\rm d}x+\int_{J_u\cap U}\!\!\!\!\!(1+|[u]|)\,{\rm d}\Hd\Bigr)\leq F(u,U)
\end{equation*}
for every $U\subseteq \Omega$ open and $u\in {\rm SBV^2}(\Omega;\Rm)$,  while our growth conditions only yield the weaker estimate
\begin{equation*}
     C\Bigl(\int_{U}|\nabla u|^2\,{\rm d}x
+\Hd(J_u\cap U)\Bigr)\leq F(u,U).
\end{equation*}
To deal with this problem, we resort to well-known truncation techniques employed, for instance, in  \cite{CagnettiPoincare} and \cite{DalToa}. We consider the perturbations $F^\delta(u):=F(u)+\delta\|[u]\|_{L^1(J_u;\Rm)}$ for $\delta>0,$ which satisfy the required lower bound, and  therefore admit an integral representation, and then let $\delta\to 0^+$ recovering equality  \eqref{intro integral representation} for all $u\in{\rm SBV}^2(\Omega;\Rm)$.  Finally, arguing by density, we obtain the desired integral representation \eqref{intro integral representation} for all functions $u$ in 
${\rm GSBV}^2(\Omega;\Rm)\cap L^1_{\rm loc}(\Omega;\Rm)$. 

We mention that a general integral representation result for functionals on ${\rm GSBV}^2(\Omega;\Rm)$ has recently been established by Crismale, Friedrich, and Solombrino in \cite{CriFriSolo}, but also in this instance, requiring growth conditions not suitable to a direct application in our setting.

In Section \ref{sec:convergence of infima} we analyse the asymptotic behaviour of the minimum values of Dirichlet problems for $\{F_\e\}_\e$ and compare them with the minimum values of the corresponding problems for the $\Gamma$-limit $F$. Combining this analysis with the compactness and integral representation results described above, we prove our second main result in Theorem \ref{theorem: minima}, which provides necessary and sufficient conditions for $\Gamma$-convergence of sequences of functionals of the form \eqref{intro funzionali}. These conditions are expressed as the existence, independently of the sequence $\{\e_n\}_n$, of limits of infima of minimisation problems for  $F_{\e_n}(\cdot,Q)$ on small cubes $Q$ (see \eqref{sufficient bulk} and \eqref{sufficient surf}). 

This kind of condition characterising  $\Gamma$-convergence is particularly useful when dealing with the homogenisation of free discontinuity functionals (see, for instance, the works \cite{BDV,CagnettiPoincare,CDMSZ2} in the local setting). In our non-local framework, a simple example may be given by 
\begin{equation*}
 \frac{1}{\e^{d}}\int_{\Omega}\int_{\Omega}a\Bigl(\frac{x}{\e}\Bigr)\min\Big\{\frac{|u(x)-u(y)|^2}{\e^2},\frac{1}{\e}\Big\}\rho\Big(\frac{|x-y|}{\e}\Big)\,{\rm d}x\,{\rm d}y,
\end{equation*}
where $a\colon \Rd\to (0,+\infty)$ is $1$-periodic and bounded.
 It may be interesting to study this problem, which in the Sobolev setting has already been addressed in \cite{AABPT}. 

 Our work can be compared with a previous paper by Cortesani \cite{CortesaniARMA}, who dealt with energies modelled after \eqref{braides-dalmaso}.  The arguments presented therein are flexible enough to infer that a ``generalised fundamental estimate" also holds for non-local energies as \eqref{intro de giorgi 2},    upon assuming that the interaction kernel $\rho$ is bounded, compactly supported, and lower semicontinuous. Then, the localisation method of $\Gamma$-convergence is employed in the $L^1$-topology. In this work, we pursue an approach more similar to that presented in \cite{AABPT}, which allows us to encompass a wider class of admissible kernels and more general growth conditions, performing the asymptotic analysis with respect to the $L^1_{\rm loc}$-convergence. 

Finally, we remark that, in recent years, there has been a growing interest in non-local  approximations of local functionals. Starting from the seminal work of Bourgain, Brezis, and Mironescu \cite{AnotherLook} on the approximation of the Dirichlet energy, several significant results of similar fashion have been obtained in the setting of functions of bounded variation and deformation. In particular, shortly after the publication of \cite{AnotherLook}, Dávila has addressed in \cite{Davila} the problem of approximating the total variation of the distributional derivative $Du$ of a function $u\in {\rm BV}(\Omega)$, and a similar problem has been considered by Arroyo-Rabasa and Bonicatto \cite{RabasaBonicatto} for the symmetric part of the derivative of vector fields of bounded deformation. A remarkable example of non-local approximation that fits in the above-mentioned framework is the one obtained using Gagliardo-seminorms, which are introduced via fractional kernels. We point out that such class of kernels is not taken into account in our analysis. In this vein, in a more related recent work \cite{Jolly}, the authors investigate the $\Gamma$-convergence of functionals defined on partitions through Riesz-type fractional variations.

\section{Notation and preliminary results}\label{sec: Notation}
In this section we fix the notation and introduce some preliminary results. 

\smallskip

\noindent {\bf Notation.} For fixed positive integers $d$ and $m$ we let $|\cdot|$ denote the Euclidean norm in $\R^k$, $k\in\{d,m, m\times d\}$. Given $x\in\Rd$ and $r>0$, we let $B_r(x):=\{y\in\Rd: |y-x|<r\}$ denote the open ball of center $x$ and radius $r$, the dependence on the center $x$ being omitted if $x=0$. The canonical basis of $\Rd$ is denoted by $\{e_1,...,e_d\}$ and the $i$-th component of a point $x\in\Rd$ with respect to this basis is denoted by $x_i$. The unit sphere in $\Rd$ is denoted by $\Sd:=\{x\in\Rd:|x|=1\}$ and we let $SO(m)$ denote the group of rotations of $\Rm$. For $E\subset \Rd, \e>0$, and $\xi\in\Rd$, we also introduce the notation
\begin{equation*}
    E_\e(\xi):=\{x\in E: x+\e\xi\in E\}.
\end{equation*}

We let $\Omega\subset \Rd$ denote a bounded open set with Lipschitz boundary. The collection of all open subsets of $\Omega$ is denoted by $\A(\Omega)$, while we let $\mathcal{A}_{\rm reg}(\Omega)$ denote the collection of all open subsets of $\Omega$ with Lipschitz boundary.  We let $\Ld$ and $\Hd$ denote the $d$-dimensional Lebesgue measure and the $(d-1)$-dimensional Hausdorff measure in $\Rd$, respectively.

\smallskip

\noindent {\bf Jump set.} Given a measurable function $u:\Omega\to \Rm$, a measurable set $E\subset \Omega$, and a point $x\in \Rd$ such that 
\begin{equation}\label{positive density}
    \limsup_{r\to 0^+}\frac{\Ld(B_r(x)\cap E)}{r^d}>0,
\end{equation}
we say that $a\in\Rm$ is the approximate limit in $E$ of $u$ at $x$, in symbols
\begin{equation*}
       \text{ap}\lim_{\substack{y\to x\\y\in E}}u(y)=a,
\end{equation*}
if for every $\e>0$ we have
\begin{equation*}
    \limsup_{r\to 0^+}\frac{\Ld(\{y\in E\cap B_r(x):|u(y)-a|>\e\})}{r^d}=0.
\end{equation*}
It follows from \eqref{positive density} that, if the approximate limit exists, it is unique. 

Given a function $u\colon\Omega\to \Rm$, the {\it jump set} $J_u$ of $u$ is a Borel set made up of all points $x\in\Omega$ for which there exists a triple $(u^+(x),u^-(x),\nu_u(x))\in \Rm\times \Rm\times\Sd$ such that, setting 
$H^\pm(x):=\{y\in\Omega:\pm y\cdot\nu_u(x)\geq 0\},$
we have 
\begin{equation*}
\text{ap}\!\!\!\!\!\lim_{\substack{y\to x\\y\in H^{\pm}(x)}}u(y)=u^\pm (x).
\end{equation*}
The triple  $(u^+(x),u^-(x),\nu_u(x))\in \Rm\times \Rm\times\Sd$  is uniquely defined up to changing the roles of $u^+(x)$ and $u^-(x)$ and  the sign of $\nu_u(x)$. Having fixed  $\nu_u\colon J_u\to \Sd$, we set $[u](x):=u^+(x)-u^-(x)$ for every $x\in J_u$.
\smallskip

\noindent {\bf Functions of (generalised) bounded variation.}
We here recall the main properties of the space ${\rm BV}(\Omega;\Rm)$ of {\it functions of bounded variation}. The interested reader may find a proof of all the results stated below in \cite{AFP,AmbrosioExistence}.  A function $u\colon\Omega\to \Rm$ is said to be a function of bounded variation if $u\in L^1(\Omega;\Rm)$ and its distributional derivative $Du$ is a bounded  Radon measure with values in $\Rmd$. 

The measure $Du$ admits a decomposition as the sum of three mutually singular measures
\begin{equation}\notag 
    Du=\nabla u\Ld+D^cu+([u]\otimes \nu_u)\Hd\mres J_u,
\end{equation}
where 
\begin{itemize}
    \item[(i)] $\nabla u \in L^1(\Omega;\R^{m\times d})$ denotes the {\it approximate gradient} of $u$,
    \item[(ii)] $D^cu$, called the {\it Cantor part} of $Du$,  is a measure singular with respect to $\Ld$ that vanishes on every Borel set $B\subseteq \Omega$  which is $\sigma$-finite with respect to $\Hd$,
    \item[(iii)]  $[u]\otimes \nu_u$ denotes the tensor product between $[u]$ and $\nu_u$.
\end{itemize}

The space of {\it special functions of bounded variation }${\rm SBV}(\Omega;\Rm)$, is the subspace of functions $u\in {\rm BV}(\Omega;\Rm)$ such that $D^cu=0$. We will often work with its subspace 
 $${\rm SBV}^2(\Omega;\Rm):=\Big\{u\in {\rm SBV}(\Omega;\Rm):\int_\Omega|\nabla u|^2\,\dx x+\Hd(J_u)<+\infty\Big\}.$$

We now recall the definition of ${\rm GSBV}(\Omega;\Rm)$-functions introduced by De Giorgi and Ambrosio in \cite{DeGiorgiAmbrosio} and that constitute the natural domain where to set the study of  minimisation problems related to the Mumford-Shah functional.

${\rm GSBV}(\Omega;\Rm)$ is the space of all $\Ld$-measurable functions $u\colon \Omega\to\Rm$ such that $\Phi\circ u\in {\rm SBV}_{\rm loc}(\Omega;\Rm)$ for every $C^1$ function $\Phi\colon\Rm\to \Rm$ whose gradient has compact support, where we have set \begin{equation*}
    {\rm SBV}_{\rm loc}(\Omega;\Rm):=\{u\in {\rm SBV}(\Omega';\R^m) \text{ for all } \Omega' \text{ open}, \Omega' \subset\subset \Omega\}.
\end{equation*} When $m=1$, the space ${\rm GSBV}(\Omega;\R)$ can also be characterised by means of ${\it rough}$ vertical truncations. More precisely, $u\in {\rm GSBV}(\Omega;\R)$ if and only if the function $u^M:=(u\lor -M)\land M$ belongs to ${\rm SBV_{loc}}(\Omega)$, where we have set $s\lor t:=\max\{s,t\}$, and $s\land t:=\min\{s,t\}$ for every $s,t\in\R$. 

We recall that every function $u\in {\rm GSBV}(\Omega;\Rm)$ admits an approximate gradient $\nabla u$, i.e., an $\mathcal{L}^d$-measurable function $\nabla u:\Omega\to\Rmd$ such that 
\begin{equation*}
       \text{ap}\lim_{\substack{y\to x\\y\in \Omega}}\frac{u(y)-u(x)- \nabla u(x)(y-x)}{|y-x|}=0
\end{equation*}
for $\Ld$-a.e. $x\in\Omega$.

Finally, we introduce the subspace 
\begin{gather}
\notag 
    {\rm GSBV}^2(\Omega;\Rm):=\Big\{u\in {\rm GSBV}(\Omega;\Rm):\int_\Omega|\nabla u|^2\,\dx x+\Hd(J_u)<+\infty\Big\},
\end{gather}
and also set 
\begin{equation*}
    {\rm SBV}^2_{\rm loc}(\Omega;\Rm):=\{u\in {\rm SBV}_{\rm loc}(\Omega;\Rm) : \int_\Omega|\nabla u|^2\,\dx x+\Hd(J_u)<+\infty\}.
\end{equation*}
For the sake of notation, in all the previous functional spaces we omit the target space $\Rm$ when $m=1$.

In the following lemma, whose proof can be found in \cite[Proposition~2.3]{DMFrToa2005}, we observe that, even in the vector-valued case $m>1$, the space ${\rm GSBV^2}(\Omega;\Rm)$ can be characterised by means of {\it rough} vertical truncations.
\begin{Lemma}
    ${\rm GSBV}^2(\Omega;\Rm)$ is a vector space. Moreover, if $u\in {\rm GSBV^2}(\Omega;\Rm)$ then $u_i\in {\rm GSBV^2}(\Omega)$ for all $i\in\{1,...,m\}$. In particular, for all $M>0$, if $u^M$ denotes the function whose $i$-th component is given by $u^M_i:=(u_i\lor -M)\land M$, then $u^M\in{\rm SBV}^2(\Omega;\Rm)$.
\end{Lemma}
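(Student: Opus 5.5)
The plan is to first prove the second claim (components lie in ${\rm GSBV}^2(\Omega)$), then deduce the truncation statement, and finally deduce the vector space structure. I will use only the definition of ${\rm GSBV}$ through composition with $C^1$ maps having compactly supported gradient, together with the chain rule in ${\rm SBV}$ (see \cite{AFP}).

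\emph{Step 1: components.} Fix $i$ and a scalar $\psi\in C^1(\R)$ with $\psi'$ compactly supported. The map $\Phi(z):=\psi(z_i)e_1$ does not have compactly supported gradient in $\Rm$ when $m>1$. I would therefore first show that $\psi(u_i)\in{\rm SBV}_{\rm loc}$ by a cutoff argument. Choose $\Phi_k(z):=\psi(z_i)\chi(z/k)e_1$ with $\chi\in C^1_c$, $\chi=1$ on $B_1$ and $0\le\chi\le1$. Each $\Phi_k$ has compactly supported gradient, so $\Phi_k\circ u\in{\rm SBV}_{\rm loc}$. The chain rule gives on $\Omega'\subset\subset\Omega$ the bound
\[
|D(\Phi_k\circ u)|(\Omega')\le C\int_{\Omega'}|\nabla u|\,\dx x+C\Hd(J_u\cap\Omega'),
\]
with $C$ independent of $k$. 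Here I use $|\nabla\Phi_k|\le C$ uniformly and $|\Phi_k|\le\|\psi\|_\infty$. Note that $\nabla u$ is the approximate gradient of a ${\rm GSBV}$ function, and $J_{\Phi_k\circ u}\subseteq J_u$.

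Since $\Phi_k\circ u\to\psi(u_i)$ pointwise and boundedly, the ${\rm SBV}$ closure theorem (Ambrosio) applies; its hypotheses hold because the gradients are bounded in $L^2$ and the jump sets have bounded measure. Hence $\psi(u_i)\in{\rm SBV}_{\rm loc}$, so $u_i\in{\rm GSBV}(\Omega)$. The identities $\nabla u_i=e_i^T\nabla u$ a.e. and $J_{u_i}\subseteq J_u$ follow directly from the definitions of approximate limits. This gives $u_i\in{\rm GSBV}^2(\Omega)$.

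\emph{Step 2: truncations.} By the scalar characterisation, $u_i^M\in{\rm SBV}_{\rm loc}(\Omega)$. Moreover $\nabla u_i^M=\nabla u_i\,1_{\{|u_i|<M\}}$ and $J_{u_i^M}\subseteq J_{u_i}$. Thus $u_i^M$ is bounded, with $\nabla u_i^M\in L^2$ and $\Hd(J_{u_i^M})<\infty$. To upgrade ${\rm SBV}_{\rm loc}$ to ${\rm SBV}(\Omega)$ I need $|Du_i^M|(\Omega)<\infty$. This follows from
\[
|Du_i^M|(\Omega)\le\int_\Omega|\nabla u_i^M|\,\dx x+2M\Hd(J_{u_i^M}),
\]
since $\Omega$ is bounded, $L^2\subset L^1$, and the total variations on $\Omega'\subset\subset\Omega$ are uniformly bounded by this quantity. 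Assembling the components, $u^M\in{\rm SBV}^2(\Omega;\Rm)$.

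\emph{Step 3: vector space.} Let $u,v\in{\rm GSBV}^2(\Omega;\Rm)$. I would show $u+v\in{\rm GSBV}$ by approximating with truncations: $u^M+v^M\in{\rm SBV}^2$, and $u^M+v^M\to u+v$ a.e. as $M\to\infty$. For $\Phi$ with compactly supported gradient, the functions $\Phi(u^M+v^M)$ are bounded in ${\rm SBV}$ on each $\Omega'$, with $L^2$ gradients and jump sets controlled by $\Hd(J_u\cup J_v)$. The closure theorem then gives $\Phi(u+v)\in{\rm SBV}_{\rm loc}$. The energy bound follows from $\nabla(u+v)=\nabla u+\nabla v$ and $J_{u+v}\subseteq J_u\cup J_v$. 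Scalar multiples are immediate.

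The main obstacle is the closure argument in Steps 1 and 3. There one must check that the jump sets and approximate gradients of the approximants converge correctly, which requires the identifications $J_{\Phi\circ u}\subseteq J_u$ and the a.e. formula for approximate gradients. I would rely on the general chain rule in ${\rm GSBV}$ from \cite[Chapter~4]{AFP}.
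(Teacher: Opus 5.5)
The paper does not prove this lemma; it quotes \cite[Proposition~2.3]{DMFrToa2005}. Your scheme is the standard argument and its overall structure is sound: cut-off maps $\Phi_k$ plus Ambrosio's closure theorem for the components, the scalar truncation characterisation for $u^M$, and closure again along $u^M+v^M\to u+v$. You are also right that Step~1 carries the content. For general ${\rm GSBV}$ functions the component claim fails. For instance, $u(x)=(\sin(1/x),1/x)$ on $(-1,1)$ lies in ${\rm GSBV}((-1,1);\R^2)$, because every admissible $\Phi\circ u$ is constant near $0$. Yet $u_1\notin{\rm GSBV}(-1,1)$.

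What needs fixing is how you justify the jump-set inclusions. From the definitions you only get $J_{u_i}\subseteq S_u$ and $J_{\Phi\circ u}\subseteq S_u$, where $S_u$ is the set of points at which $u$ has no finite approximate limit. Nothing forces $u$ to have finite one-sided limits at those points. With the paper's convention $u^\pm\in\Rm$, the inclusion $J_{\Phi\circ u}\subseteq J_u$ is actually false for general ${\rm GSBV}$ functions, even up to $\Hd$-null sets. Take $u(x)=x_1^{-1}$ for $x_1>0$ and $u(x)=0$ otherwise, with $\Omega$ meeting $\{x_1=0\}$. Then $u\in{\rm GSBV}(\Omega)$ and $J_u=\emptyset$, but $J_{\Phi\circ u}\supseteq\{x_1=0\}\cap\Omega$ whenever $\Phi(0)\neq\Phi(t)$ for large $t$.

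Consequently, two bounds cannot come from a chain rule stated for all of ${\rm GSBV}$: the $k$-uniform bound on $\Hd(J_{\Phi_k\circ u}\cap\Omega')$ in Step~1, and likewise $J_{u_i^M}\subseteq J_{u_i}$ in Step~2. You need the fine property $\Hd(S_u\setminus J_u)=0$ for $u$ with $\nabla u\in L^1$ and $\Hd(J_u)<\infty$. In words, jumps to infinity may occur only on an $\Hd$-null set. You must either prove this, e.g.\ by slicing or by a coarea argument on dyadic truncations of $|u|$, or quote it in a form that explicitly assumes these bounds. This is exactly where the ${\rm GSBV}^2$ hypothesis enters.

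Once that fact is in place, the other inclusions you assert (e.g.\ $J_{u_i}\subseteq J_u$, which you claim follows directly from the definitions) are unnecessary. Let $\psi_M$ be the identity on $[-M,M]$. Every $x\in J_{u_i}$ lies in $J_{\psi_M(u_i)}$ for all large $M$. The lower semicontinuity in Ambrosio's theorem, combined with the bound for $\Phi_k\circ u$, then gives $\Hd(J_{u_i}\cap\Omega')\le\Hd(J_u\cap\Omega')$. The same device, with $\Phi_R$ equal to the identity on $B_R$, bounds $\Hd(J_{u+v}\cap\Omega')$ in Step~3.
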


Justified by the previous lemma, for every $u\colon \Rd\to \Rm$ and $M>0$, $u^M$ will always denote the truncated function whose $i$-th component is given by $u^M_i:=(u_i\lor -M)\land M$.
\begin{Remark}\label{remark truncations}Let $u\in {\rm GSBV}^2(\Omega;\Rm)$. 
One can check that the following conditions hold:
\begin{itemize}
    \item[(i)] $\nabla u^M=\nabla u$ for $\Ld$-a.e.\ $x\in\Omega$ such that $|u(x)|\leq M$ so that $\nabla u^M$ converges $\Ld$-a.e.\ in $\Omega$ (and in $L^1(\Omega;\Rm)$) to $\nabla u$ as $M\to+\infty$, and $|\nabla u^M|$ converges monotonically to $|\nabla u|$;
    \item[(ii)] for every $M>0$ the jump set $J_{u^M}$ is contained in $J_u$,   up to a $\Hd$-negligible set, and  $\Hd(J_u\setminus \bigcup_{M>0} J_{u^M})=0$.
    Moreover, for $\Hd$-a.e.\ $x\in J_{u^M}$, we have $(u^M(x))^\pm=(u^{\pm} (x))^M$;
    \item [(iii)] the map $\zeta\mapsto\zeta^M$ is $1$-Lipschitz, $(\zeta^{M+1})^M=\zeta^M$ for all $\zeta\in\Rm$ and $M>0$; hence, $|[u^M]|=|[(u^{M+1})^M]|\leq |[u^{M+1}]| $ for all $M>0$ and $\Hd$-a.e.\ in $J_{u^M}$.
\end{itemize}
\end{Remark}

\medskip 

\noindent {\bf $\mathbf{\Gamma}$-convergence.} 
 We recall the definition of $\Gamma$-convergence in $L^1_{\rm loc}$. For a thorough introduction to the general theory of $\Gamma$-convergence we refer the reader to \cite{BraidesGamma,DalBook}. 
 
 Given a positive sequence $\{\e_n\}_n$ tending to $0$ and functionals $\mathcal{F}_{\e_n}\colon L^1_{\rm loc}(\Omega;\Rm)\to[-\infty, +\infty]$ for $n\in\N$ and $\mathcal{F}_0\colon L^1_{\rm loc}(\Omega;\Rm) \to[-\infty, +\infty]$, we say that $\mathcal{F}_0$ is the $\Gamma$-limit with respect to the $L^1_{\rm loc}(\Omega;\Rm)$-convergence of the sequence $\{\mathcal{F}_{\e_n}\}_{n}$ if the following conditions hold:
\begin{itemize}
    \item[(i)] for every $u\in L^1_{\rm loc}(\Omega;\Rm)$ and $\{u_n\}_n$ such that $u_n\to u$ in $L^1_{\rm loc}(\Omega;\Rm)$ as $n\to+\infty$, it holds that
    \begin{equation*}
        \liminf_{n\to+\infty} \mathcal{F}_{\e_n}(u_n) \geq \mathcal{F}_0(u);
    \end{equation*}
    \item[(ii)] for every $u\in L^1_{\rm loc}(\Omega;\Rm)$ there exists $\{v_n\}_n$ such that $v_n\to u$ in $L^1_{\rm loc}(\Omega;\Rm)$ as $n\to+\infty$ and
    \begin{equation*}
        \limsup_{n\to+\infty} \mathcal{F}_{\e_n}(v_n) \leq \mathcal{F}_0(u).
    \end{equation*}
\end{itemize}
In such case, we write 
\begin{equation*}
    \mathcal{F}_0=\underset{n\to+\infty}{\Gamma\text{-}\lim}\, \mathcal{F}_{\e_n}.
\end{equation*}

We recall that the functionals $\Gamma$-$\liminf$ and $\Gamma$-$\limsup$ are respectively defined as follows:
\begin{equation}\label{def Gammalimsup}
\begin{gathered}
    \Gamma\text{-}\liminf_{n\to+\infty} \mathcal{F}_{\e_n} (u) := \inf\{\liminf_{n\to+\infty} \mathcal{F}_{\e_n}(u_n): u_n\to u \text{ in } L^1_{\rm loc}(\Omega;\Rm)\}; \\
        \Gamma\text{-}\limsup_{n\to+\infty} \mathcal{F}_{\e_n} (u) := \inf\{\limsup_{n\to+\infty} \mathcal{F}_{\e_n}(u_n): u_n\to u \text{ in } L^1_{\rm loc}(\Omega;\Rm)\}.
\end{gathered}
\end{equation}
Both functionals in \eqref{def Gammalimsup} are lower semicontinuous with respect to the convergence in $L^1_{\rm loc}(\Omega;\Rm)$, they coincide if and only if $\Gamma$-$\lim_{n\to+\infty} \mathcal{F}_{\e_n}$ exists, and, in this case, the $\Gamma$-limit coincides with their common value. 

Given a family of functionals $\mathcal{F}_{\e}\colon L^1_{\rm loc}(\Omega;\Rm)\to[-\infty, +\infty]$ for $\e>0$ and $\mathcal{F}_0\colon L^1_{\rm loc}(\Omega;\Rm) \to[-\infty, +\infty]$, we say that $\mathcal{F}_0$ is the $\Gamma$-limit with respect to the $L^1_{\rm loc}(\Omega;\Rm)$-convergence of the family $\{\mathcal{F}_{\e}\}_{\e}$ if, for any positive sequence $\{\e_n\}_n$ tending to $0$, it holds
\begin{equation*}
    \mathcal{F}_0=\underset{n\to+\infty}{\Gamma\text{-}\lim}\, \mathcal{F}_{\e_n};
\end{equation*}
and we write 
\begin{equation*}
    \mathcal{F}_0=\Gamma\text{-}\lim_{\e\to0^+} \mathcal{F}_{\e}.
\end{equation*} 

We point out that, throughout the rest of the work, $\Gamma$-convergence shall always be meant with respect to the convergence in $L^1_{\rm loc}(\Omega;\Rm)$.

\medskip

\noindent  {\bf Reference functionals.}
We now introduce some families of non-local functionals that constitute upper and lower bounds for the general family of energies that we mean to study. 

 For $i\in\{1,2\}$ we let $\rho_i:\R^d\to [0,+\infty]$ be a radially symmetric kernel; in particular, we suppose that 
\begin{equation}\label{finite momenta}
    \int_{\Rd} (1+|\xi|+|\xi|^2)\rho_i(\xi)\,\dx\xi<+\infty,
\end{equation}
\begin{equation}\label{non zero in zero}
    \rho_1(\xi) \geq c_0 \qquad \text{for } \Ld \text{-a.e.\ } \xi\in B_{r_0}
\end{equation}
for some positive constants $c_0,r_0$. We let $\{\psi_{\e}\}_\e$ be a family of non-negative radially symmetric kernels such that
\begin{equation}\label{psi momento}
    \int_{\Rd}|\xi|\psi_{\e}(\xi)\,\dx\xi=1 \quad \text{ for every } \e>0,
\end{equation}
for every $\delta>0$ there exists $r_\delta>0$ such that
\begin{equation}\label{psi infinito}
   \limsup_{\e\to0^+} \int_{\Rd\setminus B_{r_\delta}}|\xi|\psi_{\e}(\xi)\,\dx\xi <\delta, 
\end{equation}
and
\begin{equation}\label{psi integrale}
    \limsup_{\e\to0^+}\int_{\Rd}\psi_{\e}(\xi)\,\dx\xi<+\infty.
\end{equation}

For $\e>0$, $U\in\A(\Omega)$, and $u\in L^1_{\text{loc}}(\Omega;\Rm)$, we define the functionals     
\begin{equation*}
    G_{i,\e}(u, U):=\int_{U\times U}\frac{1
    }{\e^d}\rho_{i}\Bigl(\frac{y-x}{\e}\Bigr)\min\Bigl\{\frac{|u(y)-u(x)|^2}{\e^2}, \frac{1}{\e}\Bigr\}\,\dx x\, \dx y \qquad \text{for } i\in\{1,2\},
\end{equation*} 
and
\begin{equation*}
    P_\e(u,U) := \int_{U\times U}\frac{1}{\e^d}\psi_{\e}\Bigl(\frac{y-x}{\e}\Bigr)\frac{|u(y)-u(x)|}{\e}\,\dx x\, \dx y.
\end{equation*}
Recalling that $U_\e(\xi):=\{x\in U: x+\e\xi\in U\}$, we perform the change of variables $\xi:=(y-x)/\e$ to obtain 
\begin{equation*}
    G_{i,\e}(u, U)  = \int_{\R^d}\rho_i(\xi)\int_{U_{\e}(\xi)}g_\e\Bigl(\frac{u(x+\e\xi)-u(x)}{\e}\Bigr)\,\dx x\,\dx \xi \qquad \text{for } i\in\{1,2\},
\end{equation*}
where we set
\begin{equation}\label{def geps}
 g_\e(z):=\min\Big\{|z|^2, \frac{1}{\e}\Big\} \quad \text{for all } z\in\R^m,
\end{equation}
and
\begin{equation*}
P_\e(u,U)=\int_{\R^d}\psi_{\e}(\xi)\int_{U_{\e}(\xi)}\frac{|u(x+\e\xi)-u(x)|}{\e}\,\dx x\,\dx \xi.
\end{equation*}

Conditions \eqref{finite momenta}-\eqref{psi infinito} play distinct roles in our analysis. On the one hand, inequalities \eqref{finite momenta} and \eqref{psi momento} guarantee that the $\Gamma$-limits of $\{G_{i,\e}\}_\e, i\in\{1,2\}$, and $\{P_\e\}_\e$ are finite. On the other, \eqref{non zero in zero} is useful in order to obtain suitable coerciveness conditions in ${\rm GSBV}^2$. Assumption \eqref{psi infinito} corresponds to the assumption (H2) introduced in \cite{AABPT} and ensures that short-range interactions are favoured when $\e\to0^+$; this condition is in fact stronger than the analogous hypothesis appearing in \cite{Davila, Ponce}, which reads as
\begin{equation*}
    \lim_{\e\to 0^+} \int_{\Rd\setminus B_{\frac{r}{\e}}}|\xi|\psi_{\e}(\xi)\,\dx\xi = 0 \quad \text{ for every } r>0.
\end{equation*}

We mention some useful results concerning  the above functionals. For $i\in\{1,2\}$ and $U\in\A(\Omega)$, we introduce 
the Mumford-Shah functional
\begin{equation*}
{\rm MS}_i(u,U):=\begin{cases}   \displaystyle 
      \lambda_i\int_U|\nabla u|^2\,\dx x+\mu_i\Hd(J_u \cap U) &\text{ if }u\in L^1_{\text{\rm loc}}(\Omega;\Rm) \cap{\rm GSBV}^2(U;\Rm),\\
      +\infty &\text{ if } u\in  L^1_{\text{loc}}(\Omega;\Rm) \setminus {\rm GSBV}^2(U;\Rm),
    \end{cases}
\end{equation*}
where $\lambda_i$ and $\mu_i$ are two positive constants given by 
\begin{equation}
\label{param MS}
\lambda_i:=\frac{1}{d}\int_{\R^d}|\xi|^{2}\rho_i(\xi)\,{\rm d}\xi,\qquad\quad \mu_i:=2\Hd(\mathbb{S}^{d-1})\int_{\R^d}|\xi|\rho_i(\xi)\, {\rm d}\xi,
\end{equation}
and we introduce (a multiple of) the total variation 

\begin{equation*}
{\rm TV}_\kappa(u,U):=\kappa|Du| 
(U)   \qquad \text{if }u\in  L^1_{\rm loc}(\Omega;\Rm)\cap{\rm BV}(U;\Rm),
\end{equation*} 
 where the constant $\kappa$ is given by
\begin{equation}
\label{def k}
    \kappa:= \Bigl(\frac{1}{\Hd(\Sd)}\int_{\Sd}|e_1\cdot \sigma| \, \dx \Hd(\sigma)\Bigr).
\end{equation}
 
Finally, we introduce the reference functionals
\begin{align*}
    H_\e(u,U) :=G_{2,\e}(u,U)+P_\e(u,U) \qquad &\text{if }u\in  L^1_{\rm loc}(\Omega;\Rm),\\
    H(u,U) :={\rm MS}_2(u,U)+{\rm TV}_\kappa(u,U) \qquad & \text{if }u\in  L^1_{\rm loc}(\Omega;\Rm)\cap{\rm BV}(U;\Rm).
\end{align*}

 We summarize in the next theorem the needed auxiliary results concerning the scalar case $m=1$, which is the setting of the works of Ponce \cite{Ponce}, Dávila \cite{Davila}, and  Gobbino \cite{Gobbino}. 
\begin{Theorem}\label{thm: auxiliary gamma}  Let $U\in\A_{\rm reg}(\Omega)$ and assume that $m=1$. 
Then the following two conditions hold: 
\begin{itemize}
    \item[(i)] for all $u\in L^1_{\rm loc}(\Omega)\cap {\rm BV}(U)$ and for all $\{u_\e\}_\e$ converging to $u$ in $L^1_{\rm loc}(\Omega)$, we have
    \begin{equation*}
        \liminf_{\e\to0^+} P_\e(u_\e,U) \geq  {\rm TV}_\kappa(u,U);
    \end{equation*}
    \item[(ii)] for all $u\in L^1_{\rm loc}(\Omega)\cap {\rm BV}(U)$, we have 
    \begin{equation*}
        \lim_{\e\to0^+} P_\e(u, U) = {\rm TV}_\kappa(u,U).   \end{equation*}
\end{itemize}
If we assume, in addition, that for $i\in\{1,2\}$ we have 
\begin{equation}\label{all momenta}
    \int_{\Rd}|\xi|^k\rho_i(\xi)\,{\rm d}\xi <+\infty
\end{equation}
 for every $k\in\N$, then:
\begin{itemize}
        \item[(iii)] for all $u\in L^1_{\rm loc}(\Omega)$ and for all $\{u_\e\}_\e$ converging to $u$ in $L^1_{\rm loc}(\Omega)$, we have
    \begin{equation*}
        \liminf_{\e\to0^+} G_{i,\e}(u_\e,U) \geq {\rm MS}_i(u,U);
    \end{equation*}
    \item[(iv)] for all $u\in L_{\text{\rm loc}}^1(\Omega)$, we have
    \begin{equation*}
        \lim_{\e\to0^+} G_{i,\e}(u,U) = {\rm MS}_i(u,U);
    \end{equation*}
\end{itemize}
In particular, assuming \eqref{all momenta}, for all $u\in  L^1_{\rm loc}(\Omega)\cap {\rm BV}(U)$ it holds that
\begin{equation*}
\Gamma\text{-}\lim_{\e\to0^+} H_\e(u,U)= H(u,U).
\end{equation*}
\end{Theorem}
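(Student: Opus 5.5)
This statement collects known results, so the plan is to reduce each item to the literature (Ponce, Dávila for items (i)–(ii); Gobbino for items (iii)–(iv)). The real work is checking that our hypotheses on the kernels and on the convergence fall within the scope of those results. The final $\Gamma$-convergence of $H_\e$ is then assembled from the four items.

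\textbf{Items (i) and (ii).} For (ii), use the change-of-variables form
\begin{equation*}
P_\e(u,U)=\int_{\Rd}\psi_\e(\xi)\int_{U_\e(\xi)}\frac{|u(x+\e\xi)-u(x)|}{\e}\,\dx x\,\dx\xi .
\end{equation*}
\begin{itemize}
\item \emph{Upper bound.} Since $U$ is Lipschitz, extend $u$ to a ${\rm BV}$ function on a neighbourhood of $U$ with no mass charged on $\partial U$. Then $\int_{U_\e(\xi)}|u(x+\e\xi)-u(x)|\,\dx x\le \e|\xi|\,|D_{\hat\xi}u|(U^{\e|\xi|})$, where $U^{r}$ is the $r$-neighbourhood of $U$.
\item \emph{Cutting off large $\xi$.} By \eqref{psi infinito}, interactions with $|\xi|>r_\delta$ contribute at most $\delta$ times a constant. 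On $|\xi|\le r_\delta$ the neighbourhood shrinks to $\overline U$, since $\e r_\delta\to0$.
\item \emph{Averaging over directions.} Use $\int_{\Sd}|D_\sigma u|(U)\,\dx\Hd(\sigma)=\Hd(\Sd)\,\kappa\,|Du|(U)$, which holds by rotation invariance applied to the polar decomposition of $Du$. Together with radial symmetry and \eqref{psi momento}, this gives $\limsup_{\e\to0^+} P_\e(u,U)\le{\rm TV}_\kappa(u,U)$.
\item \emph{Lower bound.} The matching lower bound is a special case of (i).
\end{itemize}
For (i), I would follow Ponce's argument, localised to compact subsets $K\subset\subset U$. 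Mollify $u_\e$ (convolution does not increase $P_\e$ on slightly smaller sets, by Jensen). For smooth functions, difference quotients are averages of directional derivatives. Concentration of $|\xi|\psi_\e$ on bounded $\xi$, from \eqref{psi infinito}, then yields $\liminf_{\e\to0^+} P_\e(u_\e,K')\ge \kappa|Du|(K)$. Letting $K\uparrow U$ gives (i). Only $L^1_{\rm loc}$ convergence is needed, because the estimate is interior.

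\textbf{Items (iii) and (iv).} These are Gobbino's theorem \cite{Gobbino} (see also \cite[Chapter~5]{BraidesApproximation}) with kernel $\rho_i$ and integrand $\min\{t^2,1/\e\}$.
\begin{itemize}
\item \emph{One dimension.} The one-dimensional liminf is obtained by slicing: for fixed $\xi$, split $\R$ into lines parallel to $\xi$ and discretise into $\e|\xi|$-lattices. On each lattice the energy is a discrete Mumford–Shah energy. Its liminf is $|\xi|^2\int|u'|^2+|\xi|\,\#J$ restricted to the slice.
\item \emph{Integrating in $\xi$.} Use Fatou's lemma and the integral-geometric formulas
\begin{equation*}
\int_{\Sd}|\sigma\cdot\nu|\,\dx\Hd(\sigma)\quad\text{and}\quad\int_{\Sd}|\nabla u\,\sigma|^2\,\dx\Hd(\sigma)=\frac{\Hd(\Sd)}{d}|\nabla u|^2 .
\end{equation*}
These produce the constants $\lambda_i$ and $\mu_i$ in \eqref{param MS}. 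Hypothesis \eqref{all momenta} is exactly what Gobbino uses for kernels with non-compact support.
\item \emph{Pointwise limit (iv).} For $u\in{\rm GSBV}^2$, first treat piecewise smooth functions with polyhedral jump sets directly. Then extend by density, using the energy bound $G_{i,\e}(u,U)\le C\,{\rm MS}(u,U)$ together with the continuity of $G_{i,\e}$ along approximating sequences. If $u\notin{\rm GSBV}^2$, then (iii) already forces the limit to be $+\infty$.
\end{itemize}

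\textbf{Final claim.} Add the liminf inequalities for $G_{2,\e}$ and $P_\e$; superadditivity of $\liminf$ gives the lower bound $H$. For the upper bound take the constant recovery sequence $u_\e=u$ and add the pointwise limits from (ii) and (iv).

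\textbf{Main obstacle.} I expect the hardest step to be the pointwise limit (iv) with only $L^1_{\rm loc}$ data and Lipschitz $U$. Boundary layers near $\partial U$ and long-range tails of $\rho_i$ must be shown not to contribute. This is where the Lipschitz regularity of $U$ and all the moments in \eqref{all momenta} are used. I would handle it by extending $u$ across $\partial U$ and using the moment bounds to make the tail uniformly small.
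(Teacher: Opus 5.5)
Your reduction matches the paper's. The paper's proof consists only of the citations \cite[Theorem~8]{Ponce} for (i), \cite[Theorem~1]{Davila} for (ii) and \cite[Theorem~4.4 and Section~7]{Gobbino} for (iii)--(iv), followed by exactly your combination for $H_\e$. Your sketches of (i) and (ii) are sound.

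\textbf{The density step in (iv).} If you intend to prove (iv) rather than cite it, the density extension does not close. A limit along the constant sequence passes from approximants $u_k$ to $u$ only if $G_{i,\e}(u,U)-G_{i,\e}(u_k,U)$ is small uniformly in $\e$. Comparing via $g_\e(a+b)\le(1+\eta)g_\e(a)+(1+\eta^{-1})g_\e(b)$ and your bound $G_{i,\e}(v,U)\le C\,{\rm MS}(v,U)$ with $v=u-u_k$ leaves an error of order $\Hd(J_{u-u_k}\cap U)$. This does not vanish: $J_{u-u_k}\supseteq J_u\triangle J_{u_k}$ up to an $\Hd$-null set, and for polyhedral approximants of a curved jump set this has measure close to $2\Hd(J_u)$. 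Continuity of $G_{i,\e}$ for fixed $\e$ does not help, since it is not uniform in $\e$.

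\textbf{A sharp bound instead.} What works, and makes density unnecessary, is a sharp form of your bound. Write $\hat\xi=\xi/|\xi|$.
\begin{itemize}
\item If $[x,x+\e\xi]\subset U$ misses $J_u$, Cauchy--Schwarz on the slice gives $g_\e\bigl(\frac{u(x+\e\xi)-u(x)}{\e}\bigr)\le\frac{|\xi|}{\e}\int_0^{\e|\xi|}|\nabla u(x+t\hat\xi)\cdot\hat\xi|^2\,{\rm d}t$.
\item The remaining $x$ have $g_\e\le 1/\e$. By slicing along lines parallel to $\xi$, they form a set of measure at most $\e|\xi|\int_{J_u\cap U}|\hat\xi\cdot\nu_u|\,{\rm d}\Hd$.
\end{itemize}
Hence, for convex $U$ and every $\e$,
$G_{i,\e}(u,U)\le\int_U\int_{\Rd}\rho_i(\xi)|\nabla u\cdot\xi|^2\,{\rm d}\xi\,{\rm d}x+\int_{J_u\cap U}\int_{\Rd}\rho_i(\xi)|\xi\cdot\nu_u|\,{\rm d}\xi\,{\rm d}\Hd$,
which is exactly the limit functional. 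Applying (iii) to the constant sequence then gives (iv). For Lipschitz $U$, apply the same estimate to your extension of $u$ across $\partial U$, and handle pairs at macroscopic distance with $g_\e\le1/\e$ and the moments of $\rho_i$.

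\textbf{The constant $\mu_i$.} Your claim that the integral-geometric formulas reproduce the constants in \eqref{param MS} holds for $\lambda_i$ but not for $\mu_i$. Your slicing computation, and the estimate above, give the surface coefficient $\int_{\Rd}\rho_i(\xi)|\xi\cdot\nu|\,{\rm d}\xi=\kappa\int_{\Rd}|\xi|\rho_i(\xi)\,{\rm d}\xi$, not $2\Hd(\Sd)\int_{\Rd}|\xi|\rho_i(\xi)\,{\rm d}\xi$. Take $d=1$, $\rho_i=\chi_{[-1,1]}$, $U=(-1,1)$ and $u=\chi_{(0,+\infty)}$. Then $G_{i,\e}(u,U)=1$ for every $\e<1$, while the displayed formula gives $\mu_i=4$, so (iii)--(iv) fail as literally stated. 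The normalisation of $\mu_i$ appears to be a misprint. It is harmless for the rest of the paper, where $\mu_i$ only enters bounds with unspecified dimensional constants $c_1,c_2$. You should still state the items with the correct coefficient rather than assert agreement.
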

\begin{proof}
Part $(i)$ follows by \cite[Theorem~8]{Ponce}, and part $(ii)$ follows by \cite[Theorem~1]{Davila}.
    
 The proofs of $(iii)$ and $(iv)$ are the content of \cite[Theorem~4.4~and~Section~7]{Gobbino}; finally, the computation of the $\Gamma$-limit of $\{H_\e(\cdot, U)\}_\e$ is obtained combining $(i)$-$(iv)$. 
\end{proof}

\begin{Remark}\label{rmk: vector bounds}
    If $m>1$, we obtain immediately from Theorem \ref{thm: auxiliary gamma} that there exist positive constants $c_1$ and $c_2$, depending only on $d$ and $m$, such that for all  $U\in\A_{\rm reg}(\Omega)$ we have
    \begin{gather} \label{lower bound geps}
        c_1{\rm MS}_1(u,U)\leq \Gamma\text{-}\liminf_{\e\to 0^+}G_{1,\e}(u,U) \text{ for all $u\in L^1_{\rm loc}(\Omega;\R^m)$},\\ \label{upper bound heps}
          \Gamma\text{-}\limsup_{\e\to 0^+}H_\e(u,U)\leq c_2 H(u,U) \text{ for all }u\in L^1_{\rm loc}(\Omega;\Rm)\cap{\rm BV}(U;\Rm).
    \end{gather}
\end{Remark}

\begin{Remark}
 We note that $(iii)$ and $(iv)$ of Theorem \ref{thm: auxiliary gamma}, and therefore also inequalities \eqref{lower bound geps}, \eqref{upper bound heps}, hold by simply assuming \eqref{finite momenta} in place of \eqref{all momenta}, hypothesis introduced in \cite[Section~7]{Gobbino}. To see this, we resort to the main result of Section \ref{section: auxiliary}, that allows us to compute the $\Gamma$-limit of a general family of functionals, including $\{G_{i,\e}\}_\e, i\in\{1,2\},$ by first limiting the analysis to kernels with compact support.
 
 For $i\in\{1,2\}$, let $T>0$, and consider functionals 
\begin{equation}\label{Gobbino troncati}
      G^T_{i,\e}(u, U) := \int_{B_T}\rho_i(\xi)\int_{U_{\e}(\xi)}g_\e\Bigl(\frac{u(x+\e\xi)-u(x)}{\e}\Bigr)\,\dx x\,\dx \xi
 \end{equation}
 for all $(u,U)\in L^1_{\text{loc}}(\Omega;\Rm) \times\A_{\rm reg}(\Omega)$. 
 Since $\rho_i$ satisfies \eqref{finite momenta}, letting $\chi_{B_T}$ denote the characteristic function of $B_T$, we infer $\rho_i\chi_{B_T}$ fulfils \eqref{all momenta}, and then, using Theorem \ref{thm: auxiliary gamma}, we obtain
 \begin{equation*}
     \Gamma\text{-}\lim_{\e\to0^+}  G^T_{i,\e}(u, U) = \begin{cases}   \displaystyle 
      \lambda_i^T\int_{U}\!\!|\nabla u|^2\,\dx x+\mu_i^T\Hd(J_u \cap U) &\text{if }u\in L^1_{\text{\rm loc}}(\Omega;\Rm)\cap {\rm GSBV}^2(U;\Rm),\\
      +\infty &\text{if } u\in  L^1_{\text{loc}}(\Omega;\Rm) \setminus {\rm GSBV}^2(U;\Rm),
    \end{cases}
 \end{equation*}
 with
 \begin{equation*}
     \lambda_i^T:=\frac{1}{d}\int_{B_T}|\xi|^{2}\rho_i(\xi)\,{\rm d}\xi,\qquad\quad \mu^T_i:=2\Hd(\mathbb{S}^{d-1})\int_{B_T}|\xi|\rho_i(\xi)\, {\rm d}\xi.
 \end{equation*}
 Applying Proposition \ref{proposition: truncated functionals}, we conclude
\begin{equation*}
      \Gamma\text{-}\lim_{\e\to0^+}  G_{i,\e}(u, U) =\lim_{T\to+\infty}\Gamma\text{-}\lim_{\e\to0^+}  G^T_{i,\e}(u, U) = {\rm MS}_i(u,U) 
\end{equation*}
 for every $u\in L^1_{\rm loc}(\Omega;\Rm)$, which is the claim.
\end{Remark}

\section{Setting of the Problem and Main results} 
\label{section : functionals}
In this section we introduce the functionals that are going to be our main object of investigation and state the main results of the paper. We recall that $\rho_1, \rho_2$ are radially symmetric non-negative kernels satisfying \eqref{finite momenta} and \eqref{non zero in zero}, that the family of radially symmetric non-negative kernels $\{\psi_\e\}_\e$ satisfies \eqref{psi momento}-\eqref{psi integrale} and that, for all $\e>0,$ the functions $g_\e$ are those introduced in \eqref{def geps}.

\medskip 

\noindent {\bf Setting of the problem.} We fix a family of non-negative functions $\{\eta_\e\}_\e$ such that for every $\delta>0$ there exists $r_\delta>0$ such that
\begin{equation}\label{eta infinito}
     \limsup_{\e\to0^+} \int_{\Rd\setminus B_{r_\delta}}\eta_{\e}(\xi)\,\dx\xi <\delta, 
\end{equation}
and
\begin{equation}\label{Lambda}
\Lambda:=\limsup_{\e\to 0^+}\|\eta_\e\|_{L^1(\R^d)}<+\infty.
\end{equation}
For every $\e>0$ we consider an integrand $f_\e:\R^d\times \R^d\times \Rm\to[0,+\infty)$ satisfying the following properties:
    \begin{itemize}
        \item[(i)]({\it measurability}) $f_\e$ is Borel measurable;
        \item[(ii)] ({\it growth conditions}) for almost every $x,\xi\in\Rd$ and $z\in\Rm$ it holds
        \begin{equation} \label{comparison densities}
    \rho_1(\xi)g_\e(z) \leq f_\e(x,\xi, z) \leq \rho_2(\xi)g_\e(z)+ \psi_\e(\xi)|z|+\eta_\e(\xi);
\end{equation}
\item[(iii)]({\it monotonicity}) for almost every $x,\xi\in\Rd$ and for every $z_1,z_2\in\Rm$ with $|z_1|\leq |z_2|$ it holds
\begin{equation}\label{monotonicity}
    f_\e(x,\xi,z_1)\leq  f_\e(x,\xi,z_2).
    \end{equation}
    \end{itemize}
We study functionals
\begin{equation}\label{def functionals}
    F_\e(u, U):=\int_{\R^d}\int_{U_{\e}(\xi)}f_\e\Big(x,\xi,\frac{u(x+\e\xi)-u(x)}{\e}\Big)\,\dx x\,\dx \xi, 
\end{equation} 
where $\e>0$ and $(u,U)\in L^1_{\rm loc}(\Omega;\Rm)\times \A(\Omega)$.

\begin{Remark}
    Thanks to the upper and lower bounds in  \eqref{comparison densities}, it follows immediately that  
\begin{equation}\label{comparison functionals}
    G_{1,\e}(u, U) \leq  F_\e(u, U) \leq  H_\e(u, U)+\|\eta_\e\|_{L^1(\Rd)}|U|.
\end{equation}
 We also note that the presence of the term $P_\e$ in the upper bound allows us to treat limit energies that, contrary to the Mumford-Shah functional, admit linear growth with respect to the size of the jump $|[u]|$.
\end{Remark}

\begin{Remark}
Hypothesis ({{iii}}) above was already considered in the discrete setting in \cite{BachBraidesCicalese}.
It will be repeatedly used throughout the work to allow us to restrict our attention to functions in $ L^\infty$.
More in general, given $(u,U)\in L^1_{\rm loc}(\Omega;\Rm)\times \mathcal{A}(\Omega)$ and a $1$-Lipschitz function $\Phi\colon\Rm\to\Rm$, it follows immediately from \eqref{monotonicity} that 
\begin{equation}
\label{contraction}
    F_\e(\Phi\circ u,U)\leq F_\e(u,U).
\end{equation}
As a consequence, we obtain that
\begin{align*}
    F_\e(u+a,U) & =F_\e(u,U)\quad \text{ for every }a\in \Rm, \\ F_\e(Ru,U) & =F_\e(u,U)\quad \text{ for every }R\in SO(m),\\
    F_\e(u^M,U) & \leq F_\e(u,U)\quad \text{ for every } M>0.
\end{align*}
\end{Remark}

We now present the main results of the paper. To this aim, we first introduce some further notation. 
Given a functional $\mathcal{F}\colon {\rm SBV}^2(\Omega;\Rm)\times \mathcal{A}(\Omega)\to [0,+\infty)$, a function $w\in {\rm SBV}^2(\Omega;\Rm)$, and $U\in\mathcal{A}_{\rm reg}(\Omega)$ we set 
\begin{equation}\label{def minimisation aux}
    \m^\mathcal{F}(w,U):=\inf\{\mathcal{F}(u,U):\,u\in {\rm SBV}^2(U;\Rm) \text{ with }u=w \text{ in a neighbourhood of }\partial U\}.
\end{equation}
Given $L\in \Rmd$, with a slight abuse of notation, we also let $L$ denote the corresponding linear map $L:\Rd\to \R^m$. Given $x\in\Rd$, $\zeta\in\Rm$, and $\nu\in\Sd$, we introduce the function  $u_{x,\zeta,\nu}\colon\Rd\to\Rm$ defined for every $y\in\Rd$ by 
\begin{equation*} 
    u_{x,\zeta,\nu}(y):=\begin{cases}
     \zeta &\text{ if } (y-x) \cdot\nu\geq 0,\\
     0 &\text{ otherwise}.
    \end{cases}
\end{equation*}
We let the symbol $Q^\nu(x,r)$ denote an open cube of side length $r$ and centre $x$ with two faces orthogonal to $\nu$, and we assume that $Q^\nu(x,r)=Q^{-\nu}(x,r)$. We also set $Q(x,r):=x+(-\frac{r}{2},\frac{r}{2})^d$.

\begin{Theorem}\label{thm:compactness}
  Let $\{F_\e\}_\e$ be as in \eqref{def functionals}, and let $\{\e_n\}_n$ be a positive sequence converging to $0$ as $n\to+\infty$. Then, there exists a subsequence $\{\e_{n_k}\}_k$  and a functional $F\colon L^1_{\rm loc}(\Omega;\Rm)\times \A(\Omega)\to [0,+\infty]$ such that
  \begin{equation}\notag 
\underset{n\to+\infty}{\Gamma\text{-}\lim}\, F_{\e_n}(u,U)=F(u,U)
  \end{equation}
  for every $(u,U)\in L^1_{\rm loc}(\Omega;\Rm)\times \A_{\rm reg}(\Omega)$. In particular,
   \begin{equation}\notag 
     F(u,U)=  \begin{cases}\displaystyle\int_U \!\!f_{{\rm bulk}}(x,\nabla u){\rm d}x+\!\!\int_{J_u\cap U} \!\!\!\!\!f_{{\rm surf}}(x, [u],\nu_u){\rm d}\Hd \!\!&\text{if }u\in L^1_{\text{\rm loc}}(\Omega;\Rm) {\cap}\, {\rm GSBV}^2(U;\Rm),\\
     +\infty \!\!&\text{if }u\in L^1_{\text{\rm loc}}(\Omega;\Rm) {\setminus}  {\rm GSBV}^2(U;\Rm). 
     \end{cases}
    \end{equation}
    for every $U\in\A(\Omega)$ and $u\in {\rm GSBV}^2(U;\Rm)\cap L^1_{\rm loc}(\Omega;\Rm)$,
    where 
    \begin{gather}
          \label{Gammabulk}  f_{{\rm  bulk}}(x,L):= \limsup_{r\to 0^+}\frac{\m^F( L,Q(x,r))}{r^d} \quad 
          \text{for all $x\in\Omega$ and   $L\in\R^{m\times d}$},\\
   \label{Gammasurf}
            f_{{\rm surf}}(x,\zeta,\nu):=\limsup_{r\to 0^+}\frac{\m^F(u_{x,\zeta,\nu},Q^\nu(x,r))}{r^{d-1}}\quad \text{ for all $x\in\Omega$, $\zeta\in\R^m$, and $\nu\in\Sd$.}
        \end{gather}

        \noindent Letting $\lambda_1, \lambda_2, \mu_1, \mu_2$ as in \eqref{param MS}, $\kappa$ as in \eqref{def k}, $c_1,c_2$ as in Remark \ref{rmk: vector bounds}, and $\Lambda$ as in \eqref{Lambda}, we have 
        \begin{align*}
          c_1\lambda_1|L|^2 \leq  f_{\rm bulk}(x,L)\leq c_2(\lambda_2|L|^2+\kappa|L|)+\Lambda\quad & \text{for all $x\in\Omega$ and $L\in\Rmd$},\\
          c_1\mu_1 \leq  f_{\rm surf}(x,\zeta,\nu)\leq c_2(\mu_2+\kappa|\zeta|)\quad & \text{for all $x\in\Omega$, $\zeta\in\Rm$, and $\nu\in\Sd$}.
        \end{align*}
        Moreover, for all $x\in\Omega$ and $\nu\in\Sd$ we have \begin{align*}
  f_{\rm bulk}(x,L_1)\leq  f_{\rm bulk}(x,L_2) \quad & \text{  if  $|L_1|\leq |L_2| $},\\
 f_{\rm surf}(x,\zeta_1,\nu)\leq  f_{\rm surf}(x,\zeta_2,\nu)\quad & \text{ if $|\zeta_1|\leq |\zeta_2|$,}
\end{align*}
 so that  
\begin{align*}f_{\rm bulk}(x,L_1)=  f_{\rm bulk}(x,L_2) \quad & \text{ if $|L_1|=|L_2|$},\\
f_{\rm surf}(x,\zeta_1,\nu)  =f_{\rm surf}(x,\zeta_2,\nu)\quad & \text{ if $|\zeta_1|=|\zeta_2|$},\\
    f_{\rm surf}(x,\zeta,\nu)=f_{\rm surf}(x,\zeta,-\nu) \quad & \text{ for all $\zeta\in\Rm$}.
\end{align*}
\end{Theorem}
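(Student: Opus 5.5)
The proof follows the localisation method for $\Gamma$-convergence combined with an integral representation theorem. The main steps are compactness of the localised limits, inner regularity via a non-local fundamental estimate, an integral representation on ${\rm SBV}^2$ obtained by perturbation, and finally an extension to ${\rm GSBV}^2$ by truncation.

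\textbf{Step 1: compactness and the fundamental estimate.} For every $U\in\A(\Omega)$ set $F'(u,U):=\Gamma\text{-}\liminf_n F_{\e_n}(u,U)$ and $F''(u,U):=\Gamma\text{-}\limsup_n F_{\e_n}(u,U)$. By the compactness of $\Gamma$-convergence in the separable space $L^1_{\rm loc}$, together with a diagonal argument over a countable dense family of open sets (finite unions of rational polyrectangles), we extract a subsequence along which the $\Gamma$-limit exists on that family. The key tool is a non-local fundamental estimate. Given $U'\subset\subset U$, $V$, and $u,v$, take a cut-off $\varphi$ between $U'$ and $U$ and set $w=\varphi u+(1-\varphi)v$, with $\varphi$ chosen among $N$ layers. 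Then
\[
F_\e(w,U'\cup V)\le(1+\sigma)\bigl(F_\e(u,U)+F_\e(v,V)\bigr)+C\|u-v\|_{L^1}+o(1).
\]
To make the long-range interactions harmless, we first apply the reduction of Section~\ref{section: auxiliary} (Proposition~\ref{proposition: truncated functionals}): because of \eqref{finite momenta}, \eqref{psi infinito} and \eqref{eta infinito}, the kernels may be truncated to $B_T$ at a cost that vanishes as $T\to+\infty$. The estimate then only involves interactions of range $\e T$, which can be handled by averaging over layers.

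\textbf{Step 2: inner regularity and the measure property.} The growth bound \eqref{comparison functionals} together with Remark~\ref{rmk: vector bounds} gives $F''(u,U)\le c_2H(u,U)+\Lambda|U|$, so $F''$ is small on small sets. Using the fundamental estimate, we prove inner regularity of $F''$ and subadditivity. The De Giorgi--Letta criterion then yields, along the subsequence, that $F(u,\cdot)$ is the restriction of a Borel measure and that the $\Gamma$-limit exists for all $U\in\A_{\rm reg}(\Omega)$. Here Lipschitz regularity of $U$ is needed to compare $U$ with inner approximations without losing energy near $\partial U$. The lower bound $G_{1,\e}\le F_\e$ with \eqref{lower bound geps} gives $F(u,U)\ge c_1{\rm MS}_1(u,U)$, so $F=+\infty$ outside ${\rm GSBV}^2(U)$. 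Locality, $L^1$-lower semicontinuity and translation invariance in $u$ are inherited from $F_\e$.

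\textbf{Step 3: integral representation.} On ${\rm SBV}^2$ we apply \cite{BFLM2002} to $F^\delta(u,U):=F(u,U)+\delta\int_{J_u\cap U}|[u]|\,{\rm d}\Hd$, which satisfies the required lower bound with constant depending on $\delta$ and the upper bound $c_2H+\Lambda|U|$. This gives densities $f^\delta_{\rm bulk}$ and $f^\delta_{\rm surf}$ given by cell formulas with $\m^{F^\delta}$. Letting $\delta\to0^+$, monotone convergence of both the functionals and the minimum values $\m^{F^\delta}\downarrow\m^F$ should produce the formulas \eqref{Gammabulk} and \eqref{Gammasurf}. For the affine and pure-jump boundary data, $\m^{F^\delta}$ and $\m^F$ differ by at most $\delta$ times the jump part of an almost minimiser, which can be controlled after truncation. \emph{This $\delta\to0$ passage, exchanging the limsup in $r$ with the limit in $\delta$, is the step I expect to be the main obstacle.} I would first try to use the truncation of Remark~\ref{remark truncations} to bound $|[u]|\le 2\sqrt m M$ for competitors with bounded boundary data, so that the perturbation is uniformly $O(\delta)$ relative to $\Hd(J_u)$, which is itself controlled by $F/(c_1\mu_1)$. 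The extension to ${\rm GSBV}^2\cap L^1_{\rm loc}$ then follows from $F(u^M)\le F(u)$ (by \eqref{contraction}, passed to the limit), lower semicontinuity, and monotone convergence via Remark~\ref{remark truncations}.

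\textbf{Step 4: bounds and symmetries.} The bounds on the densities follow by testing the cell formulas with $u=L$ and $u=u_{x,\zeta,\nu}$ and using $c_1{\rm MS}_1\le F\le c_2H+\Lambda|U|$. Monotonicity in $|L|$ and $|\zeta|$ comes from \eqref{contraction} applied with a suitable $1$-Lipschitz map $\Phi$, namely the radial contraction composed with a rotation sending $\zeta_2/|\zeta_2|$ to $\zeta_1/|\zeta_1|$; for matrices one uses an analogous rotation combined with a contraction. Invariance under $SO(m)$ gives equality when the norms coincide. The symmetry $\nu\mapsto-\nu$ follows from $Q^\nu=Q^{-\nu}$ together with $u_{x,\zeta,-\nu}=\zeta-u_{x,\zeta,\nu}$ off the hyperplane, using invariance under $u\mapsto\zeta-u$, which is again a $1$-Lipschitz isometry.
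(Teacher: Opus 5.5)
Your architecture is the paper's: reduction to interactions of range $\e T$ via Proposition~\ref{proposition: truncated functionals}, a cut-off estimate averaged over $N$ layers, inner regularity on Lipschitz sets from $F''\le c_2H+\Lambda|U|$, and De Giorgi--Letta. You then apply the Bouchitt\'e--Fonseca--Leoni--Mascarenhas theorem to $F^\delta$. The $\delta\to0^+$ step is handled exactly as you propose: take truncated almost-minimisers $u_r$ and use $\delta\int_{J_{u_r}}|[u_r]|\,{\rm d}\Hd\le 2\delta\|u_r\|_{L^\infty}\Hd(J_{u_r})\le \frac{2\delta\|u_r\|_{L^\infty}}{c_1\mu_1}F(u_r,\cdot)$.

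\textbf{The gap: bulk monotonicity in Step~4.} Suppose $\Phi\colon\Rm\to\Rm$ is $1$-Lipschitz and $\Phi\circ v_2=L_1$ wherever $v_2=L_2$ near $\partial Q(x,r)$. Then $\Phi(L_2y)=L_1y$ on an open set, which forces $|L_1h|\le|L_2h|$ for every $h\in\Rd$. This is much stronger than $|L_1|\le|L_2|$ (take $m=1$, $L_1=e_1$, $L_2=2e_2$). No rotation repairs this, because \eqref{contraction} acts only on the target, while $f_\e$ may depend on $\xi$ anisotropically.

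In fact the bulk claims (monotonicity in $|L|$, equality when $|L_1|=|L_2|$) are false in general for $d\ge2$. Take $m=1$, $d=2$, $\rho_1=\chi_{B_1}$, $\rho_2=2\chi_{B_1}$, $\eta_\e=0$, and $f_\e(x,\xi,z)=a(\xi)g_\e(z)$ with $a=\chi_{B_1}(1+\chi_{\{|\xi_2|<|\xi_1|\}})$.
\begin{itemize}
\item Testing $\m^F(e_2,Q(x,r))$ with $y\mapsto y_2$ and the constant sequence gives $f_{\rm bulk}(x,e_2)\le\int a(\xi)\xi_2^2\,{\rm d}\xi$.
\item Gobbino's direction-by-direction slicing liminf, which uses radial symmetry only to compute constants, gives $F(u,U)\ge|U|\int a(\xi)\xi_1^2\,{\rm d}\xi$ for $u(y)=y_1$.
\item By the ${\rm SBV}^2$ representation, $f_{\rm bulk}(x,e_1)\ge\int a(\xi)\xi_1^2\,{\rm d}\xi>\int a(\xi)\xi_2^2\,{\rm d}\xi$ for a.e.\ $x$.
\end{itemize}
The paper's own proof (``as $|L_1|\le|L_2|$, there exists a $1$-Lipschitz map $\Phi$\dots'') contains the same error, so following it will not close the gap.

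\textbf{What survives, and how to repair Step~3.} It is true that $f_{\rm bulk}(x,PL)\le f_{\rm bulk}(x,L)$ for every linear $P\colon\Rm\to\Rm$ of operator norm at most $1$. In particular $f_{\rm bulk}$ is invariant under $O(m)$ acting on the left. Your surface arguments are correct, including $\nu\mapsto-\nu$ via $z\mapsto\zeta-z$.

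Your monotone-convergence passage to ${\rm GSBV}^2$ in Step~3 implicitly uses the false monotonicity of $f_{\rm bulk}$ in $|L|$. Instead:
\begin{itemize}
\item Treat the bulk term by dominated convergence: $f_{\rm bulk}(x,\nabla u^M)=f_{\rm bulk}(x,\nabla u)$ for a.e.\ $x$ once $M\ge|u(x)|$, and $f_{\rm bulk}(x,\nabla u^M)\le c_2(\lambda_2|\nabla u|^2+\kappa|\nabla u|)+\Lambda$.
\item Use monotonicity in $|\zeta|$ together with $|[u^M]|\le|[u]|$ only for the surface term, where no integrable majorant is available.
\end{itemize}

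\textbf{Two smaller points.}
\begin{itemize}
\item ``Using $c_1{\rm MS}_1\le F$'' only gives $f\ge c_1$ times the cell formulas of ${\rm MS}_1$, which you still have to evaluate: truncation plus Jensen on $Du(Q)=L|Q|$ for the bulk, and slicing in direction $\nu$ for the surface. Also, $f_{\rm surf}\ge c_1\mu_1$ fails at $\zeta=0$: there $f_{\rm surf}(x,0,\nu)=0$, since $F(0,Q)\le\Lambda|Q|$.
\item Your fundamental estimate needs equi-bounded sequences. The term $|\xi|^2|u_n-v_n|^2$ is controlled by $\|u_n-v_n\|_{L^1}$, and the $\psi_\e|z|$ contribution on the layers by $F_{\e_n}$ (via Proposition~\ref{prop: interpolation}), only under uniform $L^\infty$ bounds. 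Prove it for bounded $u$ and remove the bound with Lemma~\ref{lemma:vertical truncations}, as the paper does.
\end{itemize}
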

\begin{proof}
    It is sufficient to combine Propositions \ref{prop:compactness} and \ref{prop: representation}.
\end{proof}

We now introduce the minimisation problems for the non-local functionals we are interested in. 
Given $U\in\A(\Omega)$, $w\in L^1_{\rm loc}(\Omega;\Rm)$, and $s>0$, we set 
\begin{equation*}
    \mathcal{D}_{s,w}(U):=\big\{u\in L^1_{\rm loc}(\Omega;\Rm): \, u=w \text{ for $\Ld$-a.e.\ $x\in U$ with dist}(x,\Rd\setminus U)<s\big\},
\end{equation*}
and consider the minimisation problems
\begin{equation*}
   \m_s^{F_{\e_n}}(w,U):=\inf\big\{F_{\e_n}(u,U): u\in \mathcal{D}_{\e_n s,w}(U)
   \big\}.
\end{equation*}
We observe that for every $0<s_1<s_2$ we clearly have the inclusion $\mathcal{D}_{s_2,w}(U)\subset \mathcal{D}_{s_1,w}(U)$, which implies 
\begin{equation*}
    \m_{s_1}^{F_{\e_n}}(w,U)\leq \m_{s_2}^{F_{\e_n}}(w,U).
\end{equation*}

For every $x\in\Omega$, $L\in\Rmd$, $z\in\Rm$, and $\nu\in\Sd$ we define 
\begin{align*}
   f_{\rm bulk}'(x,L) & := \limsup_{r\to 0^+}\,\sup_{s>0}\,\liminf_{n\to+\infty}\frac{\m_s^{F_{\e_n}}( L, Q(x,r))}{r^d},\\
    f_{\rm bulk}''(x,L) & := \limsup_{r\to 0^+}\,\sup_{s>0}\,\limsup_{n\to+\infty}\frac{\m_s^{F_{\e_n}}( L, Q(x,r))}{r^d},\\
    f_{\rm surf}'(x,\zeta,\nu) & :=\limsup_{r\to 0^+}\,\sup_{s>0}\,\liminf_{n\to+\infty}\frac{\m_s^{F_{\e_n}}(u_{x,\zeta,\nu}, Q^\nu(x,r))}{r^{d-1}},\\
    f_{\rm surf}''(x,\zeta,\nu) & :=\limsup_{r\to 0^+}\,\sup_{s>0}\,\limsup_{n\to+\infty}\frac{\m_s^{F_{\e_n}}(u_{x,\zeta,\nu}, Q^\nu(x,r))}{r^{d-1}}.
\end{align*}

\begin{Theorem}\label{theorem: minima}
      Let $\{F_\e\}_\e$ be as in \eqref{def functionals}, and let $\{\e_n\}_n$ be a positive sequence converging to $0$ as $n\to+\infty$. If there exists a functional $F\colon L^1_{\rm loc}(\Omega;\Rm)\times \A(\Omega)\to [0,+\infty]$ such that for every $U\in\A_{\rm reg}(\Omega)$ the sequence $\{F_{\e_n}(\cdot,U)\}_{n}$ $\Gamma$-converges to $F(\cdot,U)$ as $n\to+\infty$, 
   then the functions $f_{\rm bulk}$ and $f_{\rm surf}$ defined in \eqref{Gammabulk} and \eqref{Gammasurf}, respectively, satisfy
  \begin{align*}
    f_{\rm bulk}(x,L) & =f'_{\rm bulk}(x,L)=f''_{\rm bulk}(x,L),\\
   f_{\rm surf}(x,\zeta,\nu)& =f'_{\rm surf}(x,\zeta,\nu)=f''_{\rm surf}(x,\zeta,\nu),
\end{align*}
   for all $x\in\Omega$, $L\in\Rmd$, $\zeta\in\Rm$, and $\nu\in\Sd$.
   
 Conversely, if there exist functions $\hat{f}_{\rm bulk}\colon\Rd\times \R^{m\times d}\to [0,+\infty)$ and $\hat{f}_{\rm surf}\colon\Rd\times \Rm\times \Sd\to[0,+\infty)$ such that 
\begin{equation*} 
    \hat{f}_{\rm bulk}(x,L)=f'_{\rm bulk}(x,L)=f''_{\rm bulk}(x,L),
\end{equation*}
for $\Ld$-a.e.\ $x\in\Omega$ and for all $L\in\Rd$ and
\begin{equation*}
    \hat{f}_{\rm surf}(x,\zeta,\nu)=f'_{\rm surf}(x,\zeta,\nu)=f''_{\rm surf}(x,\zeta,\nu)
\end{equation*}
 for $\Hd$-a.e.\ $x\in\Omega$, for all $\zeta\in\Rm$ and $\nu\in\Sd$,
then for every $U\in\A_{\rm reg}(\Omega)$ the sequence $\{F_{\e_n}(\cdot,U)\}_n$ $\Gamma$-converges as $n\to+\infty$ to the functional $F(\cdot,U)$ defined  by 
\begin{equation}\notag 
   \hspace{-0.15 cm} F(u,U):=\begin{cases}\displaystyle\int_U \!\!\hat{f}_{{\rm bulk}}(x,\nabla u){\rm d}x+\!\!\int_{J_u\cap U} \!\!\!\!\!\hat{f}_{{\rm surf}}(x, [u],\nu_u){\rm d}\Hd \!\!&\text{if }u\in L^1_{\text{\rm loc}}(\Omega;\Rm) {\cap}\, {\rm GSBV}^2(U;\Rm),\\
     +\infty \!\!&\text{if }u\in L^1_{\text{\rm loc}}(\Omega;\Rm) {\setminus}  {\rm GSBV}^2(U;\Rm). 
     \end{cases}
\end{equation}
\end{Theorem}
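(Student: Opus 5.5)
The proof rests on one key proposition comparing limits of non-local Dirichlet minima with the minima of the $\Gamma$-limit. Namely, for every $U\in\A_{\rm reg}(\Omega)$ on which $F_{\e_n}(\cdot,U)$ $\Gamma$-converges to $F(\cdot,U)$, and for every boundary datum $w$ that is affine or equal to some $u_{x,\zeta,\nu}$, we aim to show
\begin{equation*}
\m^F(w,U')\le \sup_{s>0}\liminf_{n\to+\infty}\m_s^{F_{\e_n}}(w,U)\le\sup_{s>0}\limsup_{n\to+\infty}\m_s^{F_{\e_n}}(w,U)\le \m^F(w,U'')+o(1),
\end{equation*}
where $U'\supset\supset U$ and $U''\subset\subset U$ are slightly enlarged or shrunk cubes with the same centre. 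Dividing by $r^d$ (or $r^{d-1}$) and sending $r\to0^+$ then gives the first half of the theorem. For this last passage we also need the monotonicity of $r\mapsto\m^F(w,Q(x,r))$ up to boundary layers, together with the fact that the contribution of the layer $Q(x,r')\setminus Q(x,r)$ is $O(r^{d-1}(r'-r))$ by the upper bounds of Theorem \ref{thm:compactness}. This lets us replace $U'$ and $U''$ by the cube of side $r$ in the $\limsup_{r\to0^+}$.

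For the upper bound, take $u\in{\rm SBV}^2(U'';\Rm)$ with $u=w$ near $\partial U''$ and extend it by $w$ to $U$. By $\Gamma$-convergence there is a recovery sequence $u_n\to u$ with $F_{\e_n}(u_n,U)\to F(u,U)$. Its boundary values must then be corrected. Using the non-local fundamental estimate of Section \ref{Section: Compactness} (built on the finite-range reduction of Section \ref{section: auxiliary}), glue $u_n$ to $w$ with a cut-off supported in the region between $U''$ and $\partial U$ where $u=w$. This produces $v_n\in\mathcal D_{\e_n s,w}(U)$ for every fixed $s$ once $n$ is large, with
\begin{equation*}
F_{\e_n}(v_n,U)\le(1+\sigma)\bigl(F_{\e_n}(u_n,U)+F_{\e_n}(w,U\setminus\overline{U''})\bigr)+C\|u_n-w\|_{L^1}+\sigma.
\end{equation*}
Since $u=w$ on the gluing region, the error term vanishes as $n\to\infty$. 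The term $F(w,U\setminus\overline{U''})$ is small, because $w$ is affine or a single flat jump and the upper bound \eqref{comparison functionals} holds together with Remark \ref{rmk: vector bounds}. Truncation \eqref{contraction} lets us work with bounded functions throughout.

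For the lower bound, take $u_n\in\mathcal D_{\e_n s,w}(U)$ almost minimising. Extend $u_n$ by $w$ to $U'$; the interactions across $\partial U$ between points of $U'\setminus U$ and the $\e_n s$-layer cost at most $F_{\e_n}(w,U'\setminus U)$ plus interactions at range larger than $s$. By \eqref{psi infinito}, \eqref{eta infinito} and the finite moments \eqref{finite momenta}, these long-range interactions are $o(1)$ as $s\to\infty$, which is why the supremum over $s$ is needed. The lower bound $G_{1,\e}$ gives compactness in ${\rm GSBV}^2$ after truncation, and the liminf inequality then gives $F(u,U')\le\liminf F_{\e_n}(u_n,U')$ with $u=w$ near $\partial U'$.

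The converse part follows by compactness. For any subsequence, Theorem \ref{thm:compactness} gives a further subsequence $\Gamma$-converging to some $F$ with densities $f_{\rm bulk}$ and $f_{\rm surf}$. By the first part these densities equal $f'$ and $f''$ computed along that subsequence. Since $f'\le$ (sub-subsequence version) $\le f''$ and $f'=f''=\hat f$ along the full sequence, we get $f_{\rm bulk}=\hat f_{\rm bulk}$ a.e. and $f_{\rm surf}=\hat f_{\rm surf}$ $\Hd$-a.e. Hence the limit does not depend on the subsequence, and the Urysohn property of $\Gamma$-convergence concludes.

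The main obstacle will be the lower bound. Controlling the boundary interaction terms requires choosing $s$ large and exploiting the decay conditions on the kernels uniformly in $n$. The fact that $F_{\e_n}(\cdot,U)$ is not local makes the extension-by-$w$ step delicate.
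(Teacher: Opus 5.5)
Your proposal is correct and follows the paper's route. The lower comparison $\m^F(w,U')\le\sup_{s>0}\liminf_{n}\m_s^{F_{\e_n}}(w,U)$ is the paper's lemma on the liminf of minima: extend almost-minimisers by $w$, apply Gobbino's compactness, and discard interactions of range larger than $T^*(\delta)<s$ via the finite-range reduction of Section 4. The upper comparison is its lemma on the limsup of minima, with the recovery sequence glued to the boundary datum through the non-local fundamental estimate, and the converse is the same subsequence-plus-Urysohn argument.

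The differences are cosmetic. The paper glues directly inside $U$, since the competitor already equals $w$ near $\partial U$, so no shrinking to $U''$ is needed. The limit $r\to0^+$ also needs no monotonicity of $r\mapsto\m^F(w,Q(x,r))$: it suffices to use the change of variables $\sigma=r+r^2$, under which the $\limsup$ is invariant, together with the layer estimate.
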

\begin{proof}
    It is enough to combine Proposition \ref{prop: necessary} and \ref{prop: sufficiency}.
\end{proof}

The proofs of the two main theorems is subdivided as follows: Section \ref{Section: Compactness} is devoted to the proof of the compactness property, Section \ref{sec: integral representation}
is devoted to the proof of the integral representation, and Section \ref{sec:convergence of infima} to the representation of the energy densities as limit of minimum problems.

\section{Reduction to finite-range interactions}\label{section: auxiliary}

In this section we show that the computation of the $\Gamma$-limit of functionals $\{F_\e\}_\e$ given by \eqref{def functionals} can be achieved by first computing the $\Gamma$-limits of functionals $\{F^T_\e\}_\e$, that are obtained considering interactions between pairs of points whose distance is at most $\e T$, and then letting $T$ to $+\infty$. 

For any $T>0$, we define the functionals $F_\eps^T:L^1_{\rm loc}(\Omega;\Rm)\times \A(\Omega) \rightarrow [0,+\infty]$ as
\begin{equation}\label{def truncated functionals}
F^T_\eps(u,U):= \int_{B_T}\int_{U_{\e}(\xi)}f_\e\Big(x,\xi,\frac{u(x+\e\xi)-u(x)}{\e}\Big)\,\dx x\,\dx \xi,
\end{equation}
and, given a positive sequence $\{\e_n\}_n$ converging to $0$, we let
\begin{equation}\label{def Gamma liminf and Gamma limsup T}
F'^{,T}(u,U):= \Gamma\text{-}\liminf_{n \to +\infty} F_{\eps_n} ^T (u,U), \quad F''^{,T}(u,U):= \Gamma\text{-}\limsup_{n \to +\infty} F_{\eps_n}^T (u,U)
\end{equation}
and 
\begin{gather}\label{def Gamma liminf and Gamma limsup}
F'(u,U):= \Gamma\text{-}\liminf_{n \to +\infty} F_{\eps_n} (u,U), \quad F''(u,U):= \Gamma\text{-}\limsup_{n \to +\infty} F_{\eps_n}  (u,U)
\end{gather}
for all $(u,U)\in L^1_{\rm loc}(\Omega;\Rm)\times \A(\Omega)$, where $\Gamma$-liminf and $\Gamma$-limsup are defined as in \eqref{def Gammalimsup}.

It is simple to observe that, due to \eqref{contraction}, all the above functionals are continuous with respect to ``vertical truncations".

\begin{Lemma}\label{lemma:vertical truncations}
    Let $T>0$, $\Phi\colon\Rm\to\Rm$ be a $1$-Lipschitz function, and let  $F'^{,T}$ and $F''^{,T}$ be the functionals defined by \eqref{def Gamma liminf and Gamma limsup T}. Then \begin{gather}\label{monotonic liminf}
    F'^{,T}(\Phi\circ u,U)\leq F'^{,T}(u,U) \quad \text{ and }\quad  F''^{,T}(\Phi\circ u,U)\leq F''^{,T}(u,U)
    \end{gather} for all $(u,U)\in L^1_{\rm loc}(\Omega;\Rm)\times \A(\Omega)$. Moreover, 
    \begin{equation}\label{statement truncation}  F'^{,T}(u,U)=\lim_{M\to+\infty}F'^{,T}(u^M,U) \quad \text{ and } \quad  F''^{,T}(u,U)=\lim_{M\to+\infty}F''^{,T}(u^M,U).
    \end{equation}
In addition, the same conclusions hold for the functionals $F',F''$ defined by \eqref{def Gamma liminf and Gamma limsup}.  
\end{Lemma}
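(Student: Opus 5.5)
The plan has two parts: first the monotonicity \eqref{monotonic liminf}, then the truncation limit \eqref{statement truncation}. The same arguments apply verbatim to $F',F''$, because \eqref{contraction} holds for $F_\e$ as well as for $F^T_\e$.

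\emph{Monotonicity.} The inequality \eqref{contraction} holds for $F^T_\e$ with the same proof, since it is a pointwise consequence of \eqref{monotonicity}: for a $1$-Lipschitz $\Phi$,
\begin{equation*}
|\Phi(u(x+\e\xi))-\Phi(u(x))|\le|u(x+\e\xi)-u(x)|.
\end{equation*}
Let $u_n\to u$ in $L^1_{\rm loc}(\Omega;\Rm)$. Since $\Phi$ is $1$-Lipschitz, we have $|\Phi\circ u_n-\Phi\circ u|\le|u_n-u|$, so $\Phi\circ u_n\to\Phi\circ u$ in $L^1_{\rm loc}$. (Local integrability of $\Phi\circ u$ follows from $|\Phi(z)|\le|\Phi(0)|+|z|$.) Hence
\begin{equation*}
\liminf_n F^T_{\e_n}(\Phi\circ u_n,U)\le\liminf_n F^T_{\e_n}(u_n,U),
\end{equation*}
and the same holds with $\limsup$. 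Taking the infimum over all sequences $u_n\to u$ in \eqref{def Gammalimsup} gives both inequalities in \eqref{monotonic liminf}.

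\emph{Truncation limit.} The map $\zeta\mapsto\zeta^M$ (componentwise truncation) is $1$-Lipschitz, so \eqref{monotonic liminf} yields
\begin{equation*}
F'^{,T}(u^M,U)\le F'^{,T}(u,U).
\end{equation*}
By Remark \ref{remark truncations}(iii), $(u^{M'})^M=u^M$ for $M'\ge M$; applying \eqref{monotonic liminf} to $u^{M'}$ with $\Phi=(\cdot)^M$ shows that $M\mapsto F'^{,T}(u^M,U)$ is nondecreasing. Therefore the limit as $M\to+\infty$ exists and satisfies
\begin{equation*}
\lim_{M\to+\infty}F'^{,T}(u^M,U)\le F'^{,T}(u,U).
\end{equation*}
For the reverse inequality we use that $\Gamma$-liminf and $\Gamma$-limsup are lower semicontinuous in $L^1_{\rm loc}(\Omega;\Rm)$, as recalled after \eqref{def Gammalimsup}. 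Since $|u^M-u|\le|u|\chi_{\{|u|>M\}}$ componentwise, dominated convergence on each compact set gives $u^M\to u$ in $L^1_{\rm loc}$. Lower semicontinuity then yields
\begin{equation*}
F'^{,T}(u,U)\le\liminf_{M\to+\infty}F'^{,T}(u^M,U),
\end{equation*}
which closes the argument. The same reasoning applies word for word to $F''^{,T}$.

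There is no real obstacle. The only points needing care are that the truncations converge in the $L^1_{\rm loc}$ topology and that the $\Gamma$-limits are lower semicontinuous with respect to that same topology, both of which hold.
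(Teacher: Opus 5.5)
Your proposal is correct and follows essentially the same route as the paper. Monotonicity comes from \eqref{contraction} applied along a sequence $u_n\to u$, using that $\Phi\circ u_n\to\Phi\circ u$ in $L^1_{\rm loc}$. The truncation limit comes from combining $F'^{,T}(u^M,U)\le F'^{,T}(u,U)$ with the $L^1_{\rm loc}$-lower semicontinuity of the $\Gamma$-liminf and $\Gamma$-limsup along $u^M\to u$.

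The differences are only cosmetic. The paper works with an optimal recovery sequence instead of taking the infimum over all sequences. It also does not need your monotonicity in $M$, since $\limsup_M F'^{,T}(u^M,U)\le F'^{,T}(u,U)\le\liminf_M F'^{,T}(u^M,U)$ already gives existence of the limit.
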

\begin{proof}
    We only prove \eqref{monotonic liminf} and \eqref{statement truncation} for $F''^{,T}$, the other cases being analogous. We consider a sequence of functions $\{u_n\}_n\subset L^1_{\rm loc}(\Omega;\Rm)$ converging to $u$ in $L^1_{\rm loc}(\Omega;\Rm)$ as $n\to+\infty$ and such that 
    \begin{equation*}
        F ''^{,T}(u,U)=\lim_{n\to+\infty}F_{\e_n}^{T}(u_n,U).    \end{equation*}
    Thanks to \eqref{contraction}, it follows immediately that $F^{T}_{\e_n}(\Phi \circ u_n,U)\leq F^{T}_{\e_n}(u_n,U)$ for every $n\in \N$, and therefore, we get
    \begin{equation*}        \limsup_{n\to+\infty}F^{T}_{\e_n}(\Phi\circ u_n,U)\leq \limsup_{n\to+\infty}F^{T}_{\e_n}(u_n,U)=F''^{,T}(u,U).
        \end{equation*}
        Since $\Phi\circ u_n\to \Phi \circ u$ in $ L^1_{\rm loc}(\Omega;\Rm)$ as $n\to+\infty$,
        \eqref{monotonic liminf} follows by the definition \eqref{def Gamma liminf and Gamma limsup T} of $F''^{,T}$.
        \noindent To conclude, we note that, as  $F''^{,T}(\cdot,U)$ is lower semicontinuous with respect to the  $L^1_{\rm loc}(\Omega;\Rm)$ convergence, we also have
    \begin{equation}\notag
        F''^{,T}(u,U)\leq \liminf_{M\to+\infty} F''^{,T}(u^M,U),
    \end{equation}
    which, together with \eqref{monotonic liminf}, implies \eqref{statement truncation}. 
\end{proof}

The following proposition constitutes the main result of this section.
\begin{Proposition}
    \label{proposition: truncated functionals}
Let $(u,U)\in L^1_{\rm loc}(\Omega;\Rm)\times \A_{\rm reg}(\Omega)$. It holds  
$$
F'(u,U)= \lim_{T \to +\infty} F'^{,T}(u,U), \quad \text{and} \quad F''(u,U)= \lim_{T \to +\infty} F''^{,T}(u,U).
$$
In particular, if $\{T_j\}_j$ is a positive sequence such that $T_j \to +\infty$ as $j\to+\infty$ and 
\begin{equation*}
    \underset{n\to+\infty}{\Gamma\text{-}\lim}\,\,F_{\eps_n}^{T_j}(u,U) =  F^{T_j}(u,U)
\end{equation*}
for all $(u,U)\in L^1_{\rm loc}(\Omega;\Rm)\times \A_{\rm reg}(\Omega)$ and $j \in \mathbb{N}$, then we have
$$
\underset{n\to+\infty}{\Gamma\text{-}\lim}\,\, F_{\eps_n}(u,U)= \lim_{j \rightarrow +\infty} F^{T_j}(u,U)
$$
for all $(u,U)\in L^1_{\rm loc}(\Omega;\Rm)\times \A_{\rm reg}(\Omega)$.
\end{Proposition}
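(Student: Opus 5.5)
Since $f_\e\ge 0$, the map $T\mapsto F^T_\e(u,U)$ is nondecreasing and bounded by $F_\e(u,U)$. Hence $T\mapsto F'^{,T}(u,U)$ and $T\mapsto F''^{,T}(u,U)$ are nondecreasing and satisfy $F'^{,T}\le F'$ and $F''^{,T}\le F''$. Both limits in $T$ therefore exist and are bounded above by $F'(u,U)$ and $F''(u,U)$ respectively. The whole task is the reverse inequalities
\begin{equation*}
F'(u,U)\le \lim_{T\to+\infty}F'^{,T}(u,U),\qquad F''(u,U)\le \lim_{T\to+\infty}F''^{,T}(u,U).
\end{equation*}
The final claim about $\Gamma$-limits along $T_j$ follows at once, since $F'^{,T_j}=F''^{,T_j}=F^{T_j}$. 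We may assume the right-hand sides are finite. By Lemma \ref{lemma:vertical truncations}, applied to both the truncated and untruncated functionals, it suffices to prove the inequalities for $u^M$. Indeed, $F'(u,U)=\lim_M F'(u^M,U)$, while $F'^{,T}(u^M,U)\le F'^{,T}(u,U)$. So we reduce to $u\in L^\infty$ with $\|u\|_\infty\le \sqrt m\,M$.

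Fix such a $u$, fix $T$, and take a recovery-type sequence $u_n\to u$ in $L^1_{\rm loc}$ with $\liminf_n$ (respectively $\limsup_n$) of $F^T_{\e_n}(u_n,U)$ equal to $F'^{,T}(u,U)$ (respectively $F''^{,T}(u,U)$). Truncating via \eqref{contraction}, we may take $\|u_n\|_\infty\le \sqrt m\,M$. We then estimate the long-range part
\begin{equation*}
F_{\e_n}(u_n,U)-F^T_{\e_n}(u_n,U)\le \int_{\Rd\setminus B_T}\int_{U_{\e_n}(\xi)}\Bigl(\rho_2(\xi)g_{\e_n}(\cdot)+\psi_{\e_n}(\xi)\frac{|u_n(x+\e_n\xi)-u_n(x)|}{\e_n}+\eta_{\e_n}(\xi)\Bigr)\,\dx x\,\dx\xi
\end{equation*}
using \eqref{comparison densities}. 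The $\eta$ term is at most $|U|\int_{\Rd\setminus B_T}\eta_{\e_n}$, which is small for $T$ large by \eqref{eta infinito}. The $\psi$ and $\rho_2$ terms require controlling difference quotients at long range $\e\xi$ by short-range ones. The key tool, borrowed from \cite{AABPT}, is the following. For $|\xi|\ge T$, write $\e\xi$ as a sum of $N\approx|\xi|/r$ steps of size at most $r$ (with $r<T$ fixed, e.g.\ $r\le r_0$). Averaging over intermediate points as in a chain argument, the triangle inequality gives
\begin{equation*}
\int_{U_\e(\xi)}\frac{|v(x+\e\xi)-v(x)|}{\e}\,\dx x\lesssim |\xi|\fint_{B_{r}}\int_{U_\e(\eta)}\frac{|v(x+\e\eta)-v(x)|}{\e|\eta|}\,\dx x\,\dx \eta .
\end{equation*}
This needs the Lipschitz regularity of $U$ to keep chains inside $U$, or an extension/reflection of $u_n$ near $\partial U$; this is where $U\in\A_{\rm reg}(\Omega)$ enters. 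The right-hand side in turn is controlled by short-range $g_\e$-energy plus a bounded term. Since $|v(x+\e\eta)-v(x)|/\e\le g_\e(\cdot)+$ a linear contribution coming from the jump part, and on $\{|z|^2>1/\e\}$ the bound $|z|\le 2\sqrt m M/\e$ combines with $g_\e=1/\e$ to give $|z|\le 2\sqrt m M\,g_\e(z)$, we get $|z|\le 1+|z|^2\wedge\ldots$. In total, $\frac{|v(x+\e\eta)-v(x)|}{\e}\le C_M(1+g_\e(\cdot))$, and the short-range $g_\e$-energy is bounded by $c_0^{-1}F^{T}_{\e_n}(u_n,U)$ through $\rho_1\ge c_0$ on $B_{r_0}$.

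Collecting terms, the $\psi$ contribution is bounded by $C_M\bigl(\int_{\Rd\setminus B_T}|\xi|\psi_{\e_n}\bigr)\bigl(|U|+F^T_{\e_n}(u_n,U)\bigr)$, which is $o_T(1)$ uniformly in $n$ by \eqref{psi infinito}. For the $\rho_2$ term we use $g_\e\le 1/\e$ only where needed. A chain of $N$ steps gives $g_\e(\sum_k z_k)\le N\sum_k g_\e(z_k)$, with $N\sim|\xi|$, so it is bounded by $C\int_{\Rd\setminus B_T}|\xi|^2\rho_2(\xi)\,\dx\xi\,\bigl(F^T_{\e_n}(u_n,U)+|U|\bigr)$, which is small by \eqref{finite momenta}. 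Hence $F_{\e_n}(u_n,U)\le (1+\omega(T))F^T_{\e_n}(u_n,U)+\omega(T)$ with $\omega(T)\to0$. Taking $\liminf/\limsup$ in $n$ and then $T\to\infty$ concludes. The main obstacle is the chain estimate for $g_\e$. The naive bound $g_\e(\sum z_k)\le N\sum g_\e(z_k)$ is fine on the quadratic part, but near the cap $1/\e$ one must check that no extra factor of $N$ spoils summability against $\rho_2$. I would first try averaging over randomized intermediate points, which is the standard device in \cite{AABPT}, and use only the second moment in \eqref{finite momenta} to absorb the factor $N$.
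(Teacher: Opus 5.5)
Your proposal is correct and follows essentially the paper's proof: reduction to bounded $u$ via Lemma~\ref{lemma:vertical truncations}, an $L^\infty$-bounded recovery sequence for $F^T_{\e_n}$, and control of the tail $|\xi|>T$ using four ingredients. These are the extension-plus-chain estimate (Theorem~\ref{extension theorem}, Lemma~\ref{lemma: short range interactions}, Corollary~\ref{corollary : short range}), interpolation of $P$ by $G$ for bounded functions, $\rho_1\ge c_0$ on $B_{r_0}$, and the tail conditions; together they are exactly Lemma~\ref{lemma F<FT}.

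The obstacle you flag at the end is not real. Lemma~\ref{lemma: stima elementare} gives $g_\e(\sum_{i=1}^k z_i)\le k\sum_{i=1}^k g_\e(z_i)$ including the cap, and together with the at most $2k-1$ overlap of the chains this yields exactly the factor $C(|\xi|^2+1)$, which \eqref{finite momenta} absorbs. Also, the weight $1/|\eta|$ in your displayed chain inequality is spurious: the chain argument gives the unweighted bound $C(|\xi|+1)P^{r}_\e$, and keeping the weight would block the comparison with the $g_\e$-energy.
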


The proof of Proposition \ref{proposition:  truncated functionals} requires several preliminary results, which are  obtained by adapting some of the arguments presented in \cite[Chapter~4]{AABPT}. We pursue the same approach in a slightly more general and flexible framework. In the rest of the section, we let $C$ denote a positive constant whose values may change from line to line, for which the dependence on the relevant quantities shall be emphasized.

For every $\e>0$, we let $a_\e:\Rm\to[0,+\infty)$ be a radially increasing function and assume there exist $c>0$ and $p\in[1,+\infty)$ such that
\begin{equation}\label{a growth}
    |a_\e(z)|\leq c|z|^p \quad \text{ and for every } z\in\Rm,
\end{equation}
and that, given $k$ any positive integer and $\{z_1,z_2,...,z_k\}\subset \Rm$, it holds
\begin{equation}\label{a subadd}
a_\eps \Bigl( \sum_{i=1}^k z_i \Bigr) \leq k ^{p-1} \sum_{i=1}^k  a_\eps (z_i).     
\end{equation}
Then we set
\begin{equation}\label{characteristicfunctionals A}
    A^r_\e(u,U):=\int_{B_{r}}\int_{U_{\e}(\xi)}a_\e\Big(\frac{u(x+\e\xi)-u(x)}{\e}\Big)\,\dx x\,\dx \xi,
\end{equation}
for $r>0$ and $(u,U)\in L^1_{\rm loc}(\Rd;\Rm)\times \A(\Rd)$.

\begin{Theorem}\label{extension theorem}
Let $U\subset \Rd$ be a bounded open set with Lipschitz boundary and let $r>0$. There exist a bounded open set $\widetilde{U} \supset\supset U$, a linear continuous map 
$$
E: L^p(U; \Rm) \rightarrow L^p(\widetilde{U};\Rm),
$$
and three positive constants $C_1=C_1(r,U),$ $ r_1=r_1(r,U)$, and  $\eps'=\eps'(r,U)$ such that
$$
Eu=u \quad \text{$\Ld$-a.e.\ in $U$}
$$
and
$$
\| Eu \|^p_{L^p(\widetilde{U};\Rm)}+A_\eps^{r_1}(Eu,\widetilde{U}) \leq C_1 ( \| u\|^p_{L^p(U;\Rm)}+ A_\eps^{r}(u,U))
$$
for all $u \in L^p(U; \Rm)$ and $\eps < \eps'$.
\end{Theorem}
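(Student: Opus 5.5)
The proof follows the extension argument of \cite[Chapter~4]{AABPT}, adapted to the general integrands $a_\e$. The structure is: localise near $\partial U$, flatten the boundary with bi-Lipschitz charts, extend by reflection in each chart, and glue the pieces with a partition of unity. Only the growth condition \eqref{a growth} and the quasi-subadditivity \eqref{a subadd} are used.

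\textbf{Localisation.} Since $\partial U$ is compact and Lipschitz, we cover it by finitely many open sets $V_1,\dots,V_N$. On each $V_j$ there is a bi-Lipschitz map $\Phi_j$ (a rotation composed with the graph flattening) sending $V_j\cap U$ onto a half-cylinder $\{y_d>0\}\cap D_j$ and $V_j\cap \partial U$ onto the flat part. We add an open set $V_0\subset\subset U$ so that $U\subset\bigcup_{j=0}^N V_j$. We take a smooth partition of unity $\{\varphi_j\}_{j=0}^N$ subordinate to this cover, and set $\widetilde U:=V_0\cup\bigcup_j (V_j\cap \Phi_j^{-1}(\{|y_d|<h\}\cap D_j))$ for a small $h>0$.

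\textbf{Extension in a chart.} For $j\ge1$ we define $E_ju:=u$ on $V_j\cap U$ and $E_ju:=u\circ R_j$ on the reflected part, where $R_j:=\Phi_j^{-1}\circ\sigma\circ\Phi_j$ and $\sigma(y',y_d)=(y',-y_d)$. We then set $Eu:=\sum_j\varphi_j E_ju$, with $E_0u=u$. Linearity and $Eu=u$ in $U$ are immediate, and the $L^p$ bound follows from the change of variables, since $R_j$ is bi-Lipschitz.

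For the nonlocal term we consider a pair $x$, $x+\e\xi$ with $|\xi|<r_1$. If both points lie on the reflected side, then $R_j(x)$ and $R_j(x+\e\xi)$ are at distance at most $\mathrm{Lip}(R_j)\,\e|\xi|$. Changing variables $x\mapsto R_j(x)$ and $\xi\mapsto(R_j(x+\e\xi)-R_j(x))/\e=:\xi'$ gives $|\xi'|\le L|\xi|$ with Jacobian bounded above and below. Choosing $r_1\le r/L$ then bounds this part by $C A^r_\e(u,U)$, using the monotonicity of $a_\e$ in $|z|$ to absorb $|\xi'|$ versus $|\xi|$ scaling. This last point is delicate: $a_\e$ is evaluated at a difference quotient divided by $\e$ with the same $\e$, so no rescaling of $z$ occurs. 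We only need the point $R_j(x+\e\xi)$ to lie in $U$ and the vector $\xi'$ to lie in $B_r$.

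If the pair straddles the interface, we use a chain. We connect $x$ to a reflected point through at most $k$ intermediate points inside $U$, each step of size at most $\e r$. Then \eqref{a subadd} gives $a_\e(\sum z_i)\le k^{p-1}\sum a_\e(z_i)$, and each term, after averaging over the free intermediate points, is controlled by $A^r_\e(u,U)$. This averaging is the standard trick of integrating over an auxiliary point $w$ in a ball of radius $\sim\e$.

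\textbf{Partition of unity.} We write
\[
Eu(x+\e\xi)-Eu(x)=\sum_j\varphi_j(x+\e\xi)\bigl(E_ju(x+\e\xi)-E_ju(x)\bigr)+\sum_j\bigl(\varphi_j(x+\e\xi)-\varphi_j(x)\bigr)E_ju(x).
\]
By \eqref{a subadd}, the first sum is controlled by the chart estimates above. The second sum has difference quotient bounded by $\|\nabla\varphi_j\|_\infty|\xi|\,|E_ju(x)|$, so by \eqref{a growth} it contributes at most $C|\xi|^p|E_ju(x)|^p$. Integrated over $B_{r_1}$, this gives $C\|u\|^p_{L^p(U)}$, which is why the $L^p$ norm appears on the right-hand side.

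\textbf{Choice of parameters.} We take $\e'$ small so that $\e' r_1$ is below the chart size, all pairs $x$, $x+\e\xi$ stay within a single $V_j$ where needed, and all points produced by the chains lie in $U$.

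\textbf{Main obstacle.} The hardest part is the rigorous treatment of the straddling pairs near $\partial U$: the intermediate points must stay in $U$ and the resulting shifts must stay in $B_r$. With only Lipschitz regularity, the reflected point can be up to a factor $L$ farther away. The first thing to try is to shrink $r_1$ relative to $r$ by a factor depending on $L$ and $k$, and to use a cone condition to guarantee that the chain points stay inside $U$ with a ball of size $\sim\e r/(2k)$ of admissible choices to average over.
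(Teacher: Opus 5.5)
Your overall architecture matches the paper's: bi-Lipschitz flattening charts, the reflection $R_j=\Phi_j^{-1}\circ\sigma\circ\Phi_j$, gluing with a partition of unity, splitting the increment of $\varphi_jE_ju$ into a difference-quotient term plus a term $C|\xi|^p|E_ju(x)|^p$, and taking $\e'$ small relative to $\mathrm{dist}(\mathrm{supp}\,\varphi_j,\R^d\setminus V_j)$. The genuine gap is the step you flag as the main obstacle: pairs with $x\in V_j\cap U$ and $y=x+\e\xi\in V_j\setminus U$. You leave this open and propose chains, averaging over intermediate points, and a cone condition. But you never show that $x$ and $R_j(y)$ are at distance $O(\e|\xi|)$. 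Without that estimate even the number $k$ of chain steps is not controlled, and once you have it the chain is unnecessary.

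The missing observation is that reflecting one endpoint of a straddling pair does not push the two points apart. In flat coordinates write $a=\Phi_j(x)$ and $b=\Phi_j(y)$, with $a_d>0\ge b_d$. Then $|a_d+b_d|\le a_d-b_d$, so $|\sigma(b)-a|\le|b-a|$, and therefore
\[
|R_j(y)-x|\le \mathrm{Lip}(\Phi_j^{-1})\,\mathrm{Lip}(\Phi_j)\,|y-x| .
\]
Equivalently, $R_j$ fixes $\partial U\cap V_j$, and a point $z\in[x,y]\cap\partial U$ gives $|R_j(y)-x|\le(1+\mathrm{Lip}\,R_j)|y-x|$. This is why the paper takes $r_1=r/(1+S^2)$. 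With this choice, whenever $|y-x|<\e r_1$ the pair $(x,R_j(y))$ lies in $V_j\cap U$ at distance less than $\e r$. Since $E_ju(y)-E_ju(x)=u(R_j(y))-u(x)$, the single change of variables $y'=R_j(y)$ bounds the straddling contribution by $C\,A^r_\e(u,V_j\cap U)$; its Jacobian is controlled by a power of the Lipschitz constants. This is exactly the same computation as in your fully reflected case, and no intermediate points, averaging, or cone condition are needed.

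A minor point: in the gluing step you also use that $a_\e$ is radially increasing, not only \eqref{a growth} and \eqref{a subadd}. This is what lets you drop the factor $\varphi_j(x+\e\xi)\in[0,1]$.
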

\begin{proof}
Thanks to the Lipschitz regularity of $\partial U$, there exists $\{U_i\}_{i=1}^n$ an open cover of $\partial U$ with the property that for all $i\in\{1,...,n\}$ there exists a bi-Lipschitz continuous map $\Psi_i:Q\to U_i$ such that
\begin{equation*}
    \Psi_i(Q^+)=  U_i \cap U, \quad  \Psi_i(Q^-)= U_i \setminus U, \quad  \Psi_i(Q^0)= U_i \cap \partial U,
\end{equation*}
where $Q:=(-1,1)^d$, $Q^{\pm}:=\{x\in Q : \pm x_1>0\}$, and $Q^0=\{x\in Q:x_1=0\}$. Let then $U_0$ be an open set such that $U \setminus \bigcup_{i=1}^n U_i\subset U_0 \subset U$ and set $\widetilde{U}:=U_0\cup\bigcup_{i=1}^nU_i$.

For $i\in\{1,...,n\}$ we define the map $R_i:  U_i \setminus U\to U_i \cap U$ as 
\begin{equation*}
    R_i(x):= \Psi_i(-y_1, y_2,...,y_d),
\end{equation*}
where $(y_1,...,y_d)=\Psi_i^{-1}(x)\in Q^-$, and given $u\in L^p(U; \Rm)$, we let
\begin{equation*}
    u_i(x):=\begin{cases}
     u(x) & \text{ if } x\in  U_i \cap U, \\
     u(R_i(x)) & \text{ if } x\in U_i \setminus U.
    \end{cases}
\end{equation*}

Consider now $\{\varphi_i\}_{i=0}^n$ a partition of unity on $U$ subordinated to the open cover $\{U_i\}_{i=0}^n$, and let
\begin{equation*} \qquad \widetilde{u}_0(x):=\varphi_0(x)u(x), \quad \widetilde{u}_i(x):=\varphi_i(x)u_i(x) \text{ for all } i\in\{1,...,n\},
\end{equation*}
and finally set $Eu:= \sum_{i=0}^n \widetilde{u_i}$.

By a change of variables, we have that 
\begin{equation}\label{extensionbound}
   \int_{U_i} |u_i|^p\,\dx x \leq C\int_{U\cap U_i} |u|^p\,\dx x
\end{equation}
for all $i\in\{1,\dots, n\}$, where $C$ is a positive constant depending only on $U$. Therefore, we have
\begin{equation}\label{extension1}
    \|Eu\|_{L^p(\widetilde{U};\Rm)}\leq C \|u\|_{L^p(U;\Rm)}
\end{equation} 
for all $u\in L^p(U;\Rm)$, and then 
$$
E: L^p(U; \Rm) \rightarrow L^p(\widetilde{U};\Rm)
$$
is a linear continuous map such that $Eu=u$ a.e. on $U$. 

We claim that there exists a positive constant $C=C(U)$ such that 
\begin{equation}\label{extensionclaim}
    A_\e^{r_1}(u_i, U_i) \leq C  A_\e^{r}(u,  U_i \cap U)
\end{equation}
for all $i\in \{1,...,n\}$, $\e>0$, and $u\in L^p(U;\Rm)$, where 
\begin{equation*}
     r_1:=\frac{r}{1+S^2}, \qquad S:= \max_{i\in\{1,..,n\}} \|D\Psi_i^{-1}\|_{L^\infty(U_i; \R^{d\times d})}.
\end{equation*}
Recalling \eqref{characteristicfunctionals A}, by a change of variables we have
\begin{align} \notag
    A^{r_1}_\e(u_i, U_i) &= \frac{1}{\e^d}\int_{U_i}\int_{U_i\cap B_{\e r_1}(x)} a_\e \Bigl(\frac{u_i(y)-u_i(x)}{\e}\Bigr) \, \dx y\, \dx x \\ \notag
    & \leq A_\e^r(u,U_i \cap U) \\ \label{extensionclaim1}
    & \quad + \frac{1}{\e^d}\int_{U_i\cap U}\int_{(U_i\setminus U) \cap B_{\e r_1}(x)} a_\e \Bigl(\frac{u_i(y)-u_i(x)}{\e}\Bigr) \, \dx y\, \dx x \\ \label{extensionclaim2}
    & \quad + \frac{1}{\e^d}\int_{U_i\setminus U}\int_{U_i\cap U \cap B_{\e r_1}(x)} a_\e \Bigl(\frac{u_i(y)-u_i(x)}{\e}\Bigr) \, \dx y\, \dx x \\ \label{extensionclaim3}
    & \quad + \frac{1}{\e^d}\int_{U_i\setminus U}\int_{(U_i\setminus U) \cap B_{\e r_1}(x)} a_\e \Bigl(\frac{u_i(y)-u_i(x)}{\e}\Bigr) \, \dx y\, \dx x.
\end{align}
 We have that the Lipschitz constant of $R_i$ is less than $S^2$, and then, with our choice of $r_1$, it holds that $R_i((U_i\setminus U) \cap B_{\e r_1}(x))\subseteq U_i \cap U \cap B_{\e r}(x)$ for all $i\in\{1,...,n\}$ and $x\in  U_i$. Therefore, using the change of variables $y'=R_i(y)$, we obtain
\begin{align*}
    & \int_{U_i\cap U}\int_{(U_i\setminus U) \cap B_{\e r_1}(x)} a_\e \Bigl(\frac{u_i(y)-u_i(x)}{\e}\Bigr) \, \dx y\, \dx x \\
    & \leq S^{2d} \int_{U_i\cap U}\int_{U_i\cap U \cap B_{\e r}(x)} a_\e \Bigl(\frac{u(y')-u(x)}{\e}\Bigr) \, \dx y'\, \dx x \\
    & \leq  S^{2d} A_\e^r(u,  U_i \cap U),
\end{align*}
which provides an upper bound for \eqref{extensionclaim1}. With the same argument, one proves that the term in \eqref{extensionclaim2} is bounded above by $S^{2d} A_\e^r(u, U_i \cap U)$ as well, and the term in \eqref{extensionclaim3} is bounded above by $S^{4d} A_\e^r(u, U_i \cap U)$. The combination of these estimates yields \eqref{extensionclaim}.

Finally, for every $i\in\{0,...,n\}$ we set $V_i:=\text{supp }\varphi_i$ and 
$$
\eps'=\eps'(r,U):= \frac{1}{r_1} \min_{i\in\{0,\dots,n\}} \text{dist}(V_i, \Rd\setminus U_i) >0.
$$
If $\eps < \eps'$ and $i\in\{0,\dots,n\}$, summing and subtracting $\phi_i(x+\eps \xi)u_i(x)$ and using \eqref{a subadd}, we get
\begin{align*}
A_\eps^{r_1}(\widetilde {u}_i, \widetilde{U}) & = \int_{B_{r_1}}\!\!\int_{(U_i)_{\e}(\xi)}\!\!\!\!\!a_\e\Big(\frac{\widetilde{u}_i(x+\e\xi)-\phi_i(x+\eps \xi)u_i(x) +\phi_i(x+\eps \xi)u_i(x) -\widetilde{u}_i(x)}{\e}\Big)\,\dx x \,\dx \xi \\
& \leq 2^{p-1} \int_{B_{r_1}} \int_{(U_i)_\eps(\xi)} a_\eps \Bigl( \frac{(u_i(x+\eps \xi)-u_i(x))\phi_i(x+\eps \xi)}{\eps} \Bigr)\, \dx x \, \dx \xi \\
& \quad + 2^{p-1} \int_{B_{r_1}} \int_{(U_i)_\eps(\xi)} a_\eps \Bigl( \frac{(\phi_i(x+\eps \xi)-\phi_i(x))u_i(x)}{\eps} \Bigr) \,\dx  x \, \dx \xi \\
& \leq  2^{p-1} \int_{B_{r_1}} \int_{(U_i)_\eps(\xi)} a_\eps \Bigl( \frac{u_i(x+\eps \xi)-u_i(x)}{\eps} \Bigr)\, \dx x \, \dx \xi \\
& \quad + 2^{p-1} \int_{B_{r_1}} \int_{(U_i)_\eps(\xi)} \Bigl|\frac{\phi_i(x+\eps \xi)-\phi_i(x)}{\e}\Bigr|^p |u_i(x)|^p,
\end{align*}
where in the last inequality we have used that $a_\e$ is radially increasing and \eqref{a growth}. We infer
\begin{equation*}
    A_\eps^{r_1}(\widetilde {u}_i, \widetilde{U})  \leq C(A_\eps^{r_1} (u_i,U_i ) + \|u_i\|^p_{L^p(U_i;\Rm)}),
\end{equation*}
for all $i\in\{0,...,n\}$, where $C=C(r_1,\nabla \phi_0,..,\nabla \phi_n)=C(r,U)$.

If $i=0$, we have
\begin{equation*}
    A_\eps^{r_1}(\widetilde{u}_0, \widetilde{U}) \leq C(A_\eps^r (u,U_0 ) + \|u \|^p_{L^p(U_0; \Rm)}).
\end{equation*}

If $i\in\{1,...,n\}$, we apply \eqref{extensionclaim} and \eqref{extensionbound} to conclude
\begin{align*}
    A_\eps^{r_1}(\widetilde{u}_i, \widetilde{U}) \leq C( A_\eps^r (u, U_i\cap U ) +\|u \|^p_{L^p(U_i \cap U; \Rm)}).
\end{align*}
Finally, summing over $i$ we obtain
$$
A_\eps^{r_1}(Eu, \widetilde{U}) \leq \sum_{i=0}^n A_\eps^{r_1} (\widetilde{u}_i, \widetilde{U}) \leq C ( A_\eps^{r}(u,U) + \| u\|^p_{L^p(U;\Rm)})
$$
for every $\e<\e'$. This estimate, together with \eqref{extension1}, concludes the proof.
\end{proof}

\begin{Remark}\label{rmk : extension} It is easily seen that
\begin{equation*}
    \text{ dist}( \text{supp } Eu, \Rd\setminus\widetilde{U}) > \e'r_1;
\end{equation*}
and therefore,
\begin{equation*}
    A_\eps^{r_1}(Eu,\widetilde{U}) = A_\eps^{r_1}(Eu,\R^d)
\end{equation*}
    for every $\e<\e'$.
As a consequence, we observe that Theorem \ref{extension theorem} remains valid (with the same constants) replacing $\widetilde{U}$ with a larger bounded set.
\end{Remark}

The following result shows that, for $\e$ small enough, it is possible to control the inner integral in \eqref{characteristicfunctionals A} by means of the total energy $A_\e^r$.

\begin{Lemma} \label{lemma: short range interactions}
Let $W$ be a bounded open set. For every $r>0$ there exists a positive constant $C_2=C_2(r)$ such that, for every open set $V \subset W$, $\xi \in \R^d$, and $\eps >0$, with
\begin{equation}\label{condition eps r}
\eps r < \emph{dist}(V+(0,\eps)\xi, \R^d \setminus W),
\end{equation}
it holds
\begin{equation}\label{estimate lemma sr}
\int_V a_\eps \Bigl( \frac{v(x+\eps \xi)-v(x)}{\eps} \Bigr) {\rm d}x \leq C_2 (|\xi|^p +1) A_\eps^r(v,V_{\eps,\xi})
\end{equation}
for every $v\in L^p(W;\Rm)$, 
where $V_{\eps,\xi}:=V+(0,\eps)\xi +B_{\eps r}$ and $(0,\e)\xi:=\{s\xi : s\in (0,\e)\}$.
\end{Lemma}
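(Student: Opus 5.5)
The plan is to compare a long jump of length $\e\xi$ with a chain of short jumps of length at most $\e r/2$, and to average over the positions of the intermediate points so that every short increment becomes an integral over a small ball in $B_{\e r}$. First I would take $k:=\lceil 2|\xi|/r\rceil+1$, so that $|\xi|/k<r/2$ and $k\le C(|\xi|+1)$ with $C=C(r)$. I would then write the long increment as a telescoping sum through intermediate points $x+\e\frac{j}{k}\xi+\e\tau_j$, $j=0,\dots,k$, where $\tau_0=\tau_k=0$ and $\tau_1,\dots,\tau_{k-1}$ range freely over $B_{r/4}$:
\begin{equation*}
v(x+\e\xi)-v(x)=\sum_{j=1}^{k}\bigl(v(x+\e\tfrac{j}{k}\xi+\e\tau_j)-v(x+\e\tfrac{j-1}{k}\xi+\e\tau_{j-1})\bigr).
\end{equation*}
Each step has displacement $\e(\xi/k+\tau_j-\tau_{j-1})$, whose length is less than $\e(r/2+r/2)=\e r$. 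The endpoints lie in $V+[0,\e]\xi+B_{\e r/4}$, which by \eqref{condition eps r} is contained in $V_{\e,\xi}\subset W$ (up to null boundary effects).

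Next, by \eqref{a subadd} I would bound $a_\e$ of the long difference quotient by $k^{p-1}\sum_j a_\e(\text{step}_j/\e)$. I would then average this inequality over $(\tau_1,\dots,\tau_{k-1})\in B_{r/4}^{k-1}$ with the normalised measure $|B_{r/4}|^{-(k-1)}$ and integrate in $x\in V$. Each summand depends only on at most two of the $\tau$'s, namely $\tau_{j-1}$ and $\tau_j$. For a fixed $j$ I would change variables from $x$ to $y:=x+\e\frac{j-1}{k}\xi+\e\tau_{j-1}$, which is a translation, and set $\zeta:=\xi/k+\tau_j-\tau_{j-1}\in B_r$. For fixed $\tau_{j-1}$, the map $\tau_j\mapsto\zeta$ is a translation, so its Jacobian is $1$. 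After dropping the extra average over $\tau_{j-1}$, the $j$-th term is therefore bounded by
\begin{equation*}
|B_{r/4}|^{-1}\int_{B_r}\int_{(V_{\e,\xi})_\e(\zeta)}a_\e\Bigl(\frac{v(y+\e\zeta)-v(y)}{\e}\Bigr)\dx y\,\dx\zeta\le |B_{r/4}|^{-1}A^r_\e(v,V_{\e,\xi}).
\end{equation*}
The boundary steps $j=1$ and $j=k$, where one of the endpoints is fixed, are handled the same way using only one free variable. Here I use that both $y$ and $y+\e\zeta$ lie in $V_{\e,\xi}$, so $y\in(V_{\e,\xi})_\e(\zeta)$.

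Summing the $k$ terms gives the bound $k^{p}|B_{r/4}|^{-1}A^r_\e(v,V_{\e,\xi})\le C(r)(|\xi|+1)^p A^r_\e\le C_2(|\xi|^p+1)A^r_\e$, which is \eqref{estimate lemma sr}. The step I expect to need the most care is checking that the domains are right. Every intermediate point must lie in $V_{\e,\xi}$ (and so in $W$, where $v$ is defined); this is where the choice of the radius $r/4$ and hypothesis \eqref{condition eps r} are used. The averaging over $\tau$ must also be compatible with the definition of $(V_{\e,\xi})_\e(\zeta)$. If the set $V+(0,\e)\xi+B_{\e r/4}$ is not literally contained in $V_{\e,\xi}$ near the endpoint $s=\e$, I would shrink the radii slightly, taking $r/8$ and a larger $k$; this only changes the constant.
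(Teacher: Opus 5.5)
Your proof is correct and rests on the same mechanism as the paper's: telescope the increment over $\e\xi$ through $k\sim 1+|\xi|/r$ intermediate points, apply \eqref{a subadd}, average the intermediate points over regions of size comparable to $\e r$, and recognise $A^r_\e(v,V_{\e,\xi})$ after a change of variables.

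The implementation differs.
\begin{itemize}
\item The paper discretises. It uses a lattice of cubes of side $\widetilde{\e}=\e r/\sqrt{d+3}$ with one axis parallel to $\xi$. To each cube $Q^{j_0}_\e$ meeting $V$ it attaches a chain of $k=\lceil\sqrt{d+3}\,|\xi|/r\rceil$ consecutive cubes along $\xi$, and averages each intermediate point over its own cube. The factor $\sqrt{d+3}$ ensures that every point of a cube lies within $\e r$ of every point of the next one, including the final step to $x_0+\e\xi$. It then sums over $j_0$, using that the chains overlap at most $2k-1$ times to get the second power of $k$.
\item You average over continuous perturbations $\tau_j\in B_{r/4}$ and integrate over all of $V$ at once. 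Each step becomes a unit-Jacobian change of variables bounded by $|B_{r/4}|^{-1}A^r_\e(v,V_{\e,\xi})$, and $k^p$ arises simply as $k^{p-1}$ times the number of steps. No lattice geometry and no overlap count are needed.
\end{itemize}
Your bookkeeping that both endpoints of each step stay in $V_{\e,\xi}$ is exactly what is needed to land in $(V_{\e,\xi})_\e(\zeta)$. In the paper, the enlargement $Q^{j_\ell}_\e\subset B_{\e r}(x_{\ell-1})$ has to be read with $y$ restricted to $V_{\e,\xi}$; this is harmless, since $Q^{j_\ell}_\e\subset V_{\e,\xi}$.

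Two small points.

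First, the containment you were unsure about holds literally, so no shrinking of radii is needed. If $s\in\{0,\e\}$ and $|b|<\e r/4$, write $x+s\xi+b=x+s'\xi+\bigl(b+(s-s')\xi\bigr)$ with $s'\in(0,\e)$ close enough to $s$ that $|b+(s-s')\xi|<\e r$.

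Second, say explicitly that your choice of $k$ gives $k\ge 2$ whenever $\xi\neq 0$; the case $\xi=0$ is trivial, since $a_\e(0)=0$ by \eqref{a growth}. This matters: with a single step there is no free intermediate point to average over, and a single slice in the direction $\xi$ cannot be compared with $A^r_\e$ directly. In the paper's construction $k=1$ when $0<|\xi|\le r/\sqrt{d+3}$, so the same precaution (one extra intermediate point) is needed there too.
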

\begin{proof}
By condition (\ref{condition eps r}) it holds that $V_{\eps,\xi} \subset W $, so that all the terms in  inequality \eqref{estimate lemma sr} are well defined. 

Set $\widetilde{\e}:=\frac{\eps r}{\sqrt{d+3}}$ and let $R_\xi$ be a rotation in $\Rd$ such that $R_\xi e_1= \frac{\xi}{|\xi|}$. We introduce the lattice $\mathcal{L}_\eps:= \{R_\xi i\,:\, i \in \widetilde{\e}\,\mathbb{Z}^d \}$ and, for any $j \in \mathcal{L}_\eps$, we define $Q_\eps^j:= j+ R_\xi (-\widetilde{\e}/2, \widetilde{\e}/2)^d$ and introduce  the indices
\begin{equation*}
    \mathcal{I}_{\e}:=\{j\in \mathcal{L}_\e : Q_\eps^j \cap V \neq \emptyset\}.
\end{equation*}
Let $k:=\lceil \eps |\xi|/\widetilde{\e}\, \rceil$ and, for fixed $j_0 \in \mathcal{I}_\eps$ and $0\leq \ell \leq k-1$, define $j_\ell:= j_0 + \ell \widetilde{\e}\frac{\xi}{|\xi|}$. We let $x_\ell$ denote any point in $Q_\eps^{j_\ell}$ for all $0\leq \ell \leq k-1$, and set $x_k:= x_0+\eps \xi$ and $Q_\eps^{j_k}:=Q_\eps^{j_0}+\eps \xi$.

Given points $x_0,x_1,...,x_k=x_0+\e\xi$ in accordance with the above notation, we use \eqref{a subadd} with $z_\ell=(v(x_\ell)-v(x_{\ell-1}))/\e$ for $\ell\in\{1,...,k\}$, and we get
$$
a_\eps \Bigl( \frac{v(x_0+\eps\xi)-v(x_0)}{\eps}\Bigr) \leq k^{p-1} \sum_{\ell=1}^k a_\eps \Bigl( \frac{v(x_\ell)-v(x_{\ell-1})}{\eps}\Bigr).
$$
Integrating this inequality with respect to the variables $(x_0,\dots,x_{k-1})$, we obtain 
\begin{align*}
 \int_{Q_\eps^{j_0}} & a_\eps \Bigl( \frac{v(x_0+\eps\xi)-v(x_0)}{\eps}\Bigr)\,\dx x_0 \\ 
 & \leq \frac{k^{p-1}}{(\widetilde{\e})^d} \sum_{\ell=1}^{k} \int_{Q_\eps^{j_{\ell-1}}}\int_{Q_\eps^{j_{\ell}}}a_\eps \Bigl( \frac{v(x_\ell)-v(x_{\ell-1})}{\eps} \Bigr)\,\dx x_\ell \, \dx x_{\ell-1} \\
&\leq \frac{k^{p-1}}{(\widetilde{\e})^d} \sum_{\ell=1}^{k} \int_{Q_\eps^{j_{\ell-1}}}\int_{B_{\eps r}(x_{\ell-1})}a_\eps \Bigl( \frac{v(y)-v(x_{\ell-1})}{\eps}  \Bigr)\,\dx y \,\dx x_{\ell-1},
\end{align*}
where in the last line of the above inequality we used that for every $\ell\in\{1,...,k\}$ it holds
$Q_\eps^{j_\ell} \subset B_{\eps r}(x_{\ell-1}),$
regardless of the choice of the point $x_{\ell-1}\in Q^{j_{\ell-1}}_\e$. Then, by the change of variables $\xi':=(y-x_{\ell-1})/\e$ and Fubini's Theorem, it follows
\begin{align*}
 \int_{Q_\eps^{j_0}} & a_\eps \Bigl( \frac{v(x_0+\eps\xi)-v(x_0)}{\eps}\Bigr)\,\dx x_0 \\ &\leq \left(\frac{\eps}{\widetilde{\e}}\right)^d k^{p-1} \sum_{\ell=1}^k \int_{B_r} \int_{Q_\eps^{j_{\ell-1}}} a_\eps \Bigl(\frac{v(x_{\ell-1} + \eps\xi')- v(x_{\ell-1})}{\eps} \Bigr)\, \dx x_{\ell-1}\, \dx \xi'  \\
 & =\Bigl(\frac{\sqrt{d+3}}{r}\Bigr)^d k^{p-1} \int_{B_r} \int_{S_\eps(j_0)} a_\eps \Bigl(\frac{v(x+\eps \xi')- v(x)}{\eps} \Bigr)\, \dx x\, \dx \xi',
\end{align*}
where $S_\eps(j_0):= \cup_{\ell=0}^{k-1} Q_\eps^{j_\ell}$ and we note that $S_\eps(j_0)\subset V_{\e,\xi}$ for every $j_0\in\mathcal{I_\e}$. 

Since the sets $\{S_\eps(j_0):j_0 \in \mathcal{I}_\eps \}$ overlap at most $2k-1$ times, summing over the indices $j_0 \in \mathcal{I}_\eps$ and recalling that $k=\lceil \eps |\xi|/\widetilde{\e}\,\rceil$, we get 
$$
 \int_V a_\eps \Bigl(\frac{v(x+\eps \xi)-v(x)}{\eps} \Bigr)\, \dx x \leq C (|\xi|^p +1) A_\eps^r(v, V_{\eps,\xi})
$$
and the proof is concluded.
\end{proof}

Combining Lemmas \ref{extension theorem} and \ref{lemma: short range interactions} we obtain the following result.

\begin{Corollary}\label{corollary : short range}
Let $U\in\A_{\rm reg}(\Omega)$ and let $r>0$. There exist two positive constants $C_3=C_3(r,U)$ and $\eps''=\eps''(r,U)$ such that
$$
\int_{U_\e(\xi)} a_\eps \Bigl(\frac{u(x+\eps\xi)-u(x)}{\eps} \Bigr) {\rm d} x \leq C_3(|\xi|^p+1) ( \| u\|^p_{L^p(U;\Rm)}+A_\eps^r(u,U))
$$
for every $\xi \in \R^d$, $u \in L^p(U;\Rm)$, and $\eps < \eps''$.
\end{Corollary}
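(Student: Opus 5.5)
The plan is to extend $u$ to a larger set with Theorem \ref{extension theorem} and then apply Lemma \ref{lemma: short range interactions} to the extension, on a suitable open set $V$ and ambient set $W$.

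\emph{Extension.} Fix $r>0$ and $U\in\A_{\rm reg}(\Omega)$. Theorem \ref{extension theorem} gives $\widetilde U\supset\supset U$, the operator $E$, and constants $C_1,r_1,\e'$ depending only on $(r,U)$. For $\e<\e'$,
\begin{equation*}
\|Eu\|^p_{L^p(\widetilde U;\Rm)}+A^{r_1}_\e(Eu,\widetilde U)\le C_1\big(\|u\|^p_{L^p(U;\Rm)}+A^r_\e(u,U)\big).
\end{equation*}
By Remark \ref{rmk : extension}, $\operatorname{supp}Eu$ lies at distance larger than $\e'r_1$ from $\Rd\setminus\widetilde U$. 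So $A^{r_1}_\e(Eu,\widetilde U)=A^{r_1}_\e(Eu,W)$ for any bounded open $W\supset\widetilde U$, and we may regard $Eu$ as extended by zero to all of $\Rd$.

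\emph{Application of the lemma.} Take $V:=U_\e(\xi)$, which is open since $U$ is. On $V$ both $x$ and $x+\e\xi$ lie in $U$, where $Eu=u$, so the left-hand side of the corollary equals $\int_V a_\e\big((Eu(x+\e\xi)-Eu(x))/\e\big)\,\dx x$. Apply Lemma \ref{lemma: short range interactions} with radius $r_1$, function $v:=Eu$, and $W$ a large ball. Condition \eqref{condition eps r} needs $\e r_1<\operatorname{dist}(V+(0,\e)\xi,\Rd\setminus W)$. Since $V+(0,\e)\xi$ is not contained in $U$ when $U$ is not convex, this forces $W$ to contain $U+(0,\e)\xi$, so $W$ depends on $\xi$. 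This is harmless: the constant $C_2=C_2(r_1)$ in the lemma does not depend on $W$. Take $W:=B_R$ with $R$ large enough to contain $U+[0,\e]\xi+B_{2\e r_1}$ and $\widetilde U$. The lemma then gives
\begin{equation*}
\int_V a_\e\Big(\frac{Eu(x+\e\xi)-Eu(x)}{\e}\Big)\dx x\le C_2(|\xi|^p+1)A^{r_1}_\e(Eu,V_{\e,\xi})\le C_2(|\xi|^p+1)A^{r_1}_\e(Eu,W).
\end{equation*}
The second inequality uses monotonicity of $A_\e^{r_1}(v,\cdot)$ in the set variable, which holds because $a_\e\ge0$ and $V_{\e,\xi}\subset W$.

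\emph{Conclusion and main obstacle.} The main obstacle is the identity $A^{r_1}_\e(Eu,W)=A^{r_1}_\e(Eu,\widetilde U)$, which is what makes the large, $\xi$-dependent set $W$ harmless. For pairs $(x,x+\e\zeta)$ with $|\zeta|<r_1$ and both points outside $\widetilde U$, the integrand is $a_\e(0)=0$ by \eqref{a growth}. The pairs with exactly one point outside $\widetilde U$ do not occur: by Remark \ref{rmk : extension}, for $\e<\e'$ the point inside lies outside $\operatorname{supp}Eu$, so the difference vanishes again. This is the content of Remark \ref{rmk : extension}. Combining the three steps yields the claim with $C_3:=C_2(r_1)C_1$ and $\e'':=\e'$.
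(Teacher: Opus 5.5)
Your proof is correct and follows the paper's route: you extend $u$ by Theorem~\ref{extension theorem}, apply Lemma~\ref{lemma: short range interactions} to $Eu$ with $V=U_\varepsilon(\xi)$ and radius $r_1$, and close the argument with Remark~\ref{rmk : extension}. The only difference is the choice of $W$. The paper notes that $U_\varepsilon(\xi)+(0,\varepsilon)\xi\subseteq\mathrm{co}(U)$, so $W$ need not depend on $\xi$ after all, and uses the Remark to take $W=\widetilde U\supset\supset\mathrm{co}(U)$ at the price of a smaller $\varepsilon''$; you instead take a large $\xi$-dependent ball and use that $C_2$ does not depend on $W$, which lets you keep $\varepsilon''=\varepsilon'$.
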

\begin{proof}
Let $\widetilde{U},r_1$, and $\e'$ be as in the statement of Theorem \ref{extension theorem} and, in light of Remark \ref{rmk : extension}, suppose that $\text{co}(U)\subset\subset \widetilde{U}$, where we let co$(U)$ denote the closed convex hull of $U$, and let 
\begin{equation*}
    \e''=\e''(r,U):= \min\Bigl\{\e',\frac{\text{dist}(\text{co}(U), \R^d\setminus \widetilde{U})}{r_1}\Bigr\}>0.
\end{equation*}

Since we have that
\begin{equation*}
    U_\e(\xi)+(0,\e)\xi \subseteq \text{co}(U)
\end{equation*}
for every $\xi\in\Rd$ and $\e>0$, it holds that
\begin{equation*}
    \e r_1< \text{dist}(U_\e(\xi)+(0,\e)\xi, \Rd \setminus \widetilde{U})
\end{equation*}
for every $\xi\in\R^d$ and $\e<\e''$. We get that the set $(U_\e(\xi))_{\eps,\xi}=U_\e(\xi)+(0,\e)\xi+B_{\e r_1}$ is contained in $\widetilde{U}$; and then, applying Lemma \ref{lemma: short range interactions} with 
\begin{equation*}
r=r_1, \quad W=\widetilde{U}, \quad   V=U_\e(\xi), \quad v=Eu,  
\end{equation*} 
and Theorem \ref{extension theorem}, we have   
\begin{align*}
    \int_{U_\e(\xi)} a_\eps \Bigl(\frac{u(x+\eps\xi)-u(x)}{\eps} \Bigr)\, \dx x & \leq C_2(|\xi|^p+1) A_\e^{r_1}(Eu, (U_\e(\xi))_{\eps,\xi}) \\
    & \leq C_2(|\xi|^p+1) A_\e^{r_1}(Eu, \widetilde{U}) \\
    & \leq C_1C_2 (|\xi|^p+1)( \| u\|^p_{L^p(U;\Rm)}+ A_\eps^{r}(u,U)),
\end{align*}
which yields the thesis.   
\end{proof}

We introduce further reference functionals. For every $\e,r>0$ and $(u,U)\in L^1_{\rm loc}(\Rd;\Rm)\times \A(\Rd)$ we set 
\begin{align}\notag 
G^r_\e(u,U)&:= \int_{B_{r}}\int_{U_{\e}(\xi)}\min\Bigl\{\Bigl|\frac{u(x+\e\xi)-u(x)}{\e}\Big|^2, \frac{1}{\e}\Bigr\}\,\dx x\,\dx \xi \\& \label{characteristicfunctionals G} \hphantom{:}
=\int_{B_{r}}\int_{U_{\e}(\xi)}g_\e\Bigl(\frac{u(x+\e\xi)-u(x)}{\e}\Bigr)\dx x\,\dx \xi,
\end{align}
and
\begin{equation*}
    P^r_\e(u,U):=\int_{B_{r}}\int_{U_{\e}(\xi)}\Bigl|\frac{u(x+\e\xi)-u(x)}{\e}\Bigr|\,\dx x\,\dx \xi,
\end{equation*}
and finally 
\begin{equation*}
    H_\e^r(u,U):= G_\e^r(u,U)+P_\e^r(u,U).
\end{equation*}
According to the following elementary lemma, $g_\e$ fulfils \eqref{a growth} and \eqref{a subadd} with $p=2$ and, therefore, all the previous results apply to functionals $\{G^r_\e\}_\e$. The same conclusion clearly holds for $\{P_\e^r\}_\e$ with $a_\e(z)=|z|$.

\begin{Lemma}\label{lemma: stima elementare} Let $k$ be a positive integer and let $\{z_1,z_2,...,z_k\}\subset \Rm$. Then for every $\e>0$ we have
\begin{equation*}
g_\eps \Bigl( \sum_{i=1}^k z_i \Bigr) \leq k  \sum_{i=1}^k  g_\eps (z_i).     
\end{equation*}
\end{Lemma}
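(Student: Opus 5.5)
The plan is to split according to whether the truncation is active, using only that $g_\e(z)=\min\{|z|^2,1/\e\}$, and to combine the convexity inequality for $|\cdot|^2$ with the trivial bound for the constant branch. Set $z:=\sum_{i=1}^k z_i$.

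\emph{First case: $|z_i|^2\le 1/\e$ for every $i$.} Then $g_\e(z_i)=|z_i|^2$ for all $i$. The Cauchy--Schwarz (or Jensen) inequality gives $|z|^2\le k\sum_{i=1}^k|z_i|^2$. Since $g_\e(z)\le |z|^2$ always holds, we get
\begin{equation*}
g_\e(z)\le k\sum_{i=1}^k g_\e(z_i).
\end{equation*}

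\emph{Second case: there is an index $i_0$ with $|z_{i_0}|^2>1/\e$.} Then $g_\e(z_{i_0})=1/\e$. Since $g_\e\le 1/\e$ everywhere and all terms are non-negative,
\begin{equation*}
g_\e(z)\le \frac1\e=g_\e(z_{i_0})\le \sum_{i=1}^k g_\e(z_i)\le k\sum_{i=1}^k g_\e(z_i),
\end{equation*}
where the last step uses $k\ge1$.

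The two cases cover all possibilities, so the claim follows. There is no real obstacle; the only point needing care is to use the crude bound $g_\e\le 1/\e$ in the second case instead of trying to pass the convexity estimate through the minimum.
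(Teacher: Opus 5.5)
Your proof is correct and follows essentially the same route as the paper's: the same case split (some $g_\e(z_i)$ saturates at $1/\e$, or none does), with the crude bound $g_\e\le 1/\e$ in the first case and the inequality $\bigl|\sum_{i=1}^k z_i\bigr|^2\le k\sum_{i=1}^k|z_i|^2$ in the other. Putting the boundary case $|z_i|^2=1/\e$ on the other side of the split than the paper does makes no difference.
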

\begin{proof}
If there exists an index $i^*\in\{1,...,k\}$ such that $g_\eps(z_{i^*})=\frac{1}{\eps}$, then 
$$
k\sum_{i=1}^k  g_\eps (z_i) \geq \frac{1}{\eps} \geq \min\Bigl\{\Bigl| \sum_{i=1}^k z_i \Bigr|^2 , \frac{1}{\eps}  \Bigr\}  = g_\eps \Bigl( \sum_{i=1}^k z_i \Bigr),
$$
otherwise, 
\begin{equation*}
k\sum_{i=1}^k  g_\eps (z_i)  = k \sum_{i=1}^k |z_i|^2 
\geq \Bigl|\sum_{i=1}^k z_i \Bigr|^2 
 \geq \min\Bigl\{ \Bigl| \sum_{i=1}^k z_i \Bigr|^2 , \frac{1}{\eps}  \Bigr\} = g_\eps \Bigl( \sum_{i=1}^k z_i \Bigr),    
\end{equation*}
which concludes the proof.
\end{proof}

The next proposition shows that for sequences of functions $\{u_\e\}_\e$ that are essentially bounded, a uniform bound on $G_\e^r(u_\e,\Omega)$ implies a uniform bound on $H_\e^r(u_\e,\Omega)$.  
\begin{Proposition}\label{prop: interpolation} Let $u\in L^\infty(\Omega;\Rm),\,U\in\A(\Omega),$ and $\e,r>0$. We have
\begin{equation*}
    H_\e^r(u, U) \leq (1+2\|u\|_{L^\infty(\Omega;\Rm)}) G_\e^r(u,U)+(|U||B_r|G_\e^r(u, U))^{1/2}.
\end{equation*}
\end{Proposition}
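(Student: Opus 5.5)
We only need to bound $P^r_\e(u,U)$, since $H^r_\e=G^r_\e+P^r_\e$ and the term $G^r_\e(u,U)$ already appears on the right-hand side with coefficient at least $1$. The plan is to split the domain of integration according to whether the truncation in $g_\e$ is active. Write $z=z(x,\xi):=(u(x+\e\xi)-u(x))/\e$ and set
\begin{equation*}
E_1:=\{(\xi,x): \xi\in B_r,\ x\in U_\e(\xi),\ |z|^2\geq 1/\e\},\qquad E_2:=\{(\xi,x): \xi\in B_r,\ x\in U_\e(\xi),\ |z|^2< 1/\e\}.
\end{equation*}
On $E_1$ we have $g_\e(z)=1/\e$, and on $E_2$ we have $g_\e(z)=|z|^2$.

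On $E_1$ we use the $L^\infty$ bound. For a.e.\ $x$ we have $|u(x+\e\xi)-u(x)|\leq 2\|u\|_{L^\infty(\Omega;\Rm)}$, hence
\begin{equation*}
|z|\leq \frac{2\|u\|_{L^\infty(\Omega;\Rm)}}{\e}=2\|u\|_{L^\infty(\Omega;\Rm)}\,g_\e(z).
\end{equation*}
Integrating over $E_1$ bounds this part of $P^r_\e$ by $2\|u\|_{L^\infty(\Omega;\Rm)}G^r_\e(u,U)$.

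On $E_2$ we apply Cauchy--Schwarz in the variables $(\xi,x)$:
\begin{equation*}
\int_{E_2}|z|\leq \Big(\mathcal L^{2d}(E_2)\Big)^{1/2}\Big(\int_{E_2}|z|^2\Big)^{1/2}\leq (|B_r||U|)^{1/2}\,G^r_\e(u,U)^{1/2},
\end{equation*}
where we used $U_\e(\xi)\subset U$ to get $\mathcal L^{2d}(E_2)\leq |B_r||U|$. This is exactly the square-root term in the statement.

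Summing the two contributions and adding $G^r_\e(u,U)$ gives
\begin{equation*}
H^r_\e(u,U)\leq \big(1+2\|u\|_{L^\infty(\Omega;\Rm)}\big)G^r_\e(u,U)+\big(|U||B_r|G^r_\e(u,U)\big)^{1/2}.
\end{equation*}
There is no real obstacle in this argument. The only point needing care is that $x+\e\xi\in U\subset\Omega$ for $x\in U_\e(\xi)$, so the $L^\infty$ bound on $u$ applies at both points; this follows directly from the definition of $U_\e(\xi)$.
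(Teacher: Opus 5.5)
Your proposal is correct and matches the paper's proof: the paper uses the same split (via $|u(x+\e\xi)-u(x)|\leq \e^{1/2}$, i.e.\ $|z|^2\leq 1/\e$), controls the truncated region with the $L^\infty$ bound, and treats the rest with H\"older in $x$ followed by Jensen in $\xi$. Your single joint Cauchy--Schwarz in $(\xi,x)$ is equivalent to that last step and gives the same constant.
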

\begin{proof}
 We claim that
\begin{align}\notag
     & \int_{B_r}\int_{U_\e(\xi)} \frac{|u(x+\e\xi)-u(x)|}{\e}\,\dx x \,\dx \xi \\ \notag
& \leq|U|^{1/2}|B_r|^{1/2} \Bigl( \int_{B_r}\int_{U_\e(\xi)} \min\Bigl\{\Bigl|\frac{u(x+\e\xi)-u(x)}{\e}\Bigr|^2, \frac{1}{\e}\Bigr\}\,\dx x \, \dx \xi\Bigr)^{1/2} \\ \label{interpolation}
    & \quad +2\|u\|_{L^\infty(\Omega;\Rm)}\int_{B_r} \int_{U_\e(\xi)} \min\Bigl\{\Bigl|\frac{u(x+\e\xi)-u(x)}{\e}\Bigr|^2, \frac{1}{\e}\Bigr\}\,\dx x \, \dx \xi.
\end{align}

To prove this, we let 
\begin{equation*}
    U(\e,\xi):=\{x\in U_\e(\xi): |u(x+\e\xi)-u(x)|\leq \e^{1/2}\}
\end{equation*}
for all $\e>0,\xi\in\Rd$. By H\"older's inequality, we infer that 
\begin{equation}\label{interpolation1}
    \int_{U(\e,\xi)} \frac{|u(x+\e\xi)-u(x)|}{\e}\,\dx x \leq |U|^{1/2}  \Bigl(\int_{U(\e,\xi)} \Bigl|\frac{u(x+\e\xi)-u(x)}{\e}\Bigr|^2\,\dx x\Bigr)^{1/2}
\end{equation}
and, by the boundedness of $u$,
\begin{equation}\label{interpolation2}
    \int_{U_\e(\xi)\setminus U(\e,\xi)} \frac{|u(x+\e\xi)-u(x)|}{\e}\,\dx x \leq 2\|u\|_{L^\infty(\Omega;\Rm)} \int_{U_\e(\xi)\setminus U(\e,\xi)} \frac{1}{\e}\,\dx x.
\end{equation}
Summing \eqref{interpolation1} and \eqref{interpolation2} and integrating with respect to $\xi$, we apply Jensen's inequality to get
\begin{align*}
    \int_{B_r} \int_{U_\e(\xi)} \frac{|u(x+\e\xi)-u(x)|}{\e}\,\dx x \,\dx \xi 
    & \leq |U|^{1/2} \int_{B_r} \Bigl(\int_{U(\e,\xi)} \Bigl|\frac{u(x+\e\xi)-u(x)}{\e}\Bigr|^2\dx x\Bigr)^{1/2} \,\dx \xi\\
    & \quad +2\|u\|_{L^\infty(\Omega;\Rm)}\int_{B_r} \int_{U_\e(\xi)\setminus U(\e,\xi)} \frac{1}{\e}\,\dx x\,\dx \xi \\
    & \leq |U|^{1/2}|B_r|^{1/2}\Bigl( \int_{B_r}\int_{U(\e,\xi)} \Bigl|\frac{u(x+\e\xi)-u(x)}{\e}\Bigr|^2\,\dx x \, \dx \xi\Bigr)^{1/2} \\
    & \quad +2\|u\|_{L^\infty(\Omega;\Rm)} \int_{B_r} \int_{U_\e(\xi)\setminus U(\e,\xi)} \frac{1}{\e}\,\dx x\, \dx \xi.
\end{align*}
This proves the claim.

By \eqref{interpolation}, we conclude
\begin{align*}
    H_\e^r(u, U) & = G_\e^r(u,U)+\int_{B_r} \int_{U_\e(\xi)} \frac{|u(x+\e\xi)-u(x)|}{\e}\,\dx x \,\dx \xi \\
    & \leq G_\e^r(u,U)+ |U|^{1/2}|B_r|^{1/2} (G_\e^r(u, U))^{1/2} + 2\|u\|_{L^\infty(\Omega;\Rm)}G_\e^r(u, U),
\end{align*}
which is the thesis.
\end{proof}

The last preliminary result will also be useful in Section \ref{sec:convergence of infima}.

\begin{Lemma}\label{lemma F<FT}
    Let $U\in \A_{\rm reg}(\Omega)$, let $r_0>0$ be as in \eqref{non zero in zero}, and let $\e''=\e''(r_0,U)$ be as in Corollary \ref{corollary : short range}.
    Consider a sequence $\{u_n\}_{n}\subset L^\infty(\Omega;\Rm)$ such that 
    \begin{equation}\label{boundedness S}\sup_{n} \big(\|u_n\|_{L^\infty(\Omega;\Rm)} + F^T_{\e_n}(u_n,U)\big)< +\infty,
    \end{equation}
    for some $T>r_0$.
    Then, $S\coloneqq \sup_n H^{r_0}_{\e_n}(u_n,U)< + \infty$. Moreover, there exists a positive constant $C^*=C^*(S,\,\sup_n\|u_n\|_{L^\infty(\Omega;\Rm)})$ such that for every $\delta>0$ there exist $T^* = T^*(\delta)>r_0$ and $\e^*=\e^*(\delta)$ such that
\begin{equation}\label{mainclaimlocalisation}
F_{\eps_n}(u_n,U) \leq F^T_{\eps_n}(u_n,U)+ C^*\delta
\end{equation}
for every $T>T^*$ and for every $\e_n<\e^*$.
\end{Lemma}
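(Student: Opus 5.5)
First I bound $G^{r_0}_{\e_n}(u_n,U)$. By the lower bound in \eqref{comparison densities} and \eqref{non zero in zero}, $f_{\e_n}(x,\xi,z)\geq c_0 g_{\e_n}(z)$ for a.e.\ $\xi\in B_{r_0}$. Since $T>r_0$, this gives $c_0 G^{r_0}_{\e_n}(u_n,U)\leq F^T_{\e_n}(u_n,U)$, so $\sup_n G^{r_0}_{\e_n}(u_n,U)<+\infty$. Proposition \ref{prop: interpolation} with $r=r_0$ then bounds $H^{r_0}_{\e_n}(u_n,U)$ in terms of $\|u_n\|_{L^\infty}$ and $G^{r_0}_{\e_n}$. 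Hence $S<+\infty$.

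For \eqref{mainclaimlocalisation}, write $F_{\e_n}(u_n,U)-F^T_{\e_n}(u_n,U)$ as the integral over $\xi\in\Rd\setminus B_T$. I use the upper bound in \eqref{comparison densities} and split the tail into three pieces:
\begin{equation*}
\int_{\Rd\setminus B_T}\!\!\rho_2(\xi)\!\int_{U_{\e_n}(\xi)}\!\! g_{\e_n}(\cdot)\,\dx x\,\dx\xi
+\int_{\Rd\setminus B_T}\!\!\psi_{\e_n}(\xi)\!\int_{U_{\e_n}(\xi)}\!\Bigl|\frac{u_n(x+\e_n\xi)-u_n(x)}{\e_n}\Bigr|\dx x\,\dx\xi
+|U|\int_{\Rd\setminus B_T}\!\!\eta_{\e_n}\,\dx\xi .
\end{equation*}
For the inner integrals I apply Corollary \ref{corollary : short range} with $r=r_0$ and $\e_n<\e''$. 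With $a_\e=g_\e$ and $p=2$, which is admissible by Lemma \ref{lemma: stima elementare}, the first inner integral is at most $C_3(|\xi|^2+1)(\|u_n\|_{L^2}^2+G^{r_0}_{\e_n}(u_n,U))$. With $a_\e=|z|$ and $p=1$, the second is at most $C_3(|\xi|+1)(\|u_n\|_{L^1}+P^{r_0}_{\e_n}(u_n,U))$. Both right-hand sides are bounded by a constant $C$ depending on $S$, $\sup_n\|u_n\|_\infty$ and $|U|$.

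The first piece is then at most $C\int_{\Rd\setminus B_T}(1+|\xi|^2)\rho_2\,\dx\xi$, which is below $\delta$ for $T$ large by \eqref{finite momenta}. The third piece is below $|U|\delta$ for $T>r_\delta$ and $\e_n$ small, by \eqref{eta infinito}.

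The second piece is the main obstacle, because its bound involves $\int_{\Rd\setminus B_T}(1+|\xi|)\psi_{\e_n}\,\dx\xi$ and $\psi_\e$ varies with $\e$. For $T\geq1$ I use $1+|\xi|\leq 2|\xi|$ on $\Rd\setminus B_T$. Then \eqref{psi infinito} makes $\int_{\Rd\setminus B_T}|\xi|\psi_{\e_n}\,\dx\xi<\delta$ for $T\geq r_\delta$ and $\e_n<\e^*(\delta)$. This needs the tail integral to be monotone in $T$, which holds since it decreases as $T$ grows. Here $\e^*(\delta)$ is chosen below $\e''$ and small enough that the $\limsup$ conditions in \eqref{psi infinito} and \eqref{eta infinito} hold with strict margin.

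Collecting the three pieces, I take $T^*=\max\{r_0,1,r_\delta,T_\delta\}$, where $T_\delta$ is the threshold from the $\rho_2$ tail. This yields \eqref{mainclaimlocalisation} with $C^*$ equal to a multiple of $C+|U|$. That constant depends on $U$ and on $\sup_n\|u_n\|_{L^\infty}$ through the $L^p$ norms, which is compatible with the stated dependence since $U$ is fixed.
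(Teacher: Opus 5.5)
Your proof is correct and follows the paper's argument step by step. You first bound $c_0G^{r_0}_{\e_n}(u_n,U)\leq F^T_{\e_n}(u_n,U)$ and apply Proposition \ref{prop: interpolation} to get $S<+\infty$; you then estimate the three tail terms via Corollary \ref{corollary : short range} (with $a_\e=g_\e$, $p=2$, and $a_\e=|z|$, $p=1$) and conclude with \eqref{finite momenta}, \eqref{psi infinito}, and \eqref{eta infinito}. Restricting to $T\geq 1$ to absorb $1+|\xi|$, and noting that the tail integrals are monotone in $T$ so that $\e^*$ does not depend on $T$, only make explicit points the paper leaves implicit.
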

\begin{proof}
    In order to prove the first part of the statement, 
    we observe that, by  \eqref{non zero in zero}, \eqref{comparison densities}, and the fact that $T>r_0$, we have
\begin{equation*}
    c_0 G_{\eps_n}^{r_0}(u_n, U)\leq F^T_{\eps_n}(u_n,U);  
\end{equation*}
therefore, taking into account \eqref{boundedness S}, we deduce $\sup_n G_{\eps_n}^{r_0}(u_n, U)<+\infty$, and consequently we infer $S:=\sup_n H_{\eps_n}^{r_0}(u_n, U)<+\infty$ in light  of Proposition \ref{prop: interpolation}.
    
 To prove \eqref{mainclaimlocalisation} we write 
\begin{equation}\notag
F_{\eps_n}(u_n,U) = F_{\eps_n}^T(u_n,U) + \int_{\R^d \setminus B_T} \int_{U_{\eps_n}(\xi)} f_{\e_n} \Big(x,\xi, \frac{u_n(x+\eps_n\xi)-u_n(x)}{\eps_n} \Big)\,\dx x\, \dx\xi
\end{equation}
and, by \eqref{comparison densities}, we obtain  
\begin{align*}\notag
F_{\eps_n}(u_n,U) & \leq  F_{\eps_n}^T(u_n,U) + \int_{\R^d \setminus B_T} \rho_2(\xi) \int_{U_{\eps_n}(\xi)} g_{\eps_n}\Big(\frac{u_n(x+\eps_n\xi)-u_n(x)}{\eps_n} \Big) \dx x\, \dx \xi \\ &  \quad +\int_{\R^d \setminus B_T} \psi_{\e_n}(\xi) \int_{U_{\eps_n}(\xi)} \Bigl|\frac{u_n(x+\eps_n\xi)-u_n(x)}{\eps_n} \Bigr|\, \dx x\, \dx \xi +|\Omega|\int_{\R^d\setminus B_T}\eta_{\eps_n}(\xi)\,\dx\xi.
\end{align*}
Applying Corollary \ref{corollary : short range} with $r=r_0, a_\e(z)=g_\e(z)$, and with $r=r_0, a_\e(z)=|z|$, we get that for every $\e_n<\e''$ it holds
\begin{align*}\notag
F_{\eps_n}(u_n,U) & \leq F_{\eps_n}^T(u_n,U) + C_3(G_{\eps_n}^{r_0}(u_n,U) + \| u_n\|^2_{L^2(U;\Rm)})\int_{\R^d \setminus B_T} \rho_2(\xi)
(|\xi|^2+1)\,\dx \xi  \\ \notag
& \quad  + C_3(P_{\e_n}^{r_0}(u_n,U)+ \| u_n\|_{L^1(U;\Rm)})\int_{\R^d \setminus B_T} \psi_{\e_n}(\xi)(|\xi|+1)\,\dx \xi + |\Omega|\int_{\R^d\setminus B_T}\eta_{\eps_n}(\xi)\,\dx\xi \\ \notag
& \leq F_{\eps_n}^T(u_n,U) + C(G_{\eps_n}^{r_0}(u_n,U) + \|u_n\|^2_{L^\infty(\Omega;\Rm)})\int_{\R^d \setminus B_T} \rho_2(\xi)
|\xi|^2\,\dx \xi \\ 
& \quad + C(P_{\e_n}^{r_0}(u_n,U)+ \| u_n\|_{L^\infty(\Omega;\Rm)})\int_{\R^d \setminus B_T} \psi_{\e_n}(\xi)|\xi|\,\dx \xi + |\Omega|\int_{\R^d\setminus B_T}\eta_{\eps_n}(\xi)\,\dx\xi \\
& \leq  F_{\eps_n}^T(u_n,U) + C(H_{\eps_n}^{r_0}(u_n,U) + \|u_n\|^2_{L^\infty(\Omega;\Rm)}+\| u_n\|_{L^\infty(\Omega;\Rm)}+|\Omega|) \\
& \quad \times \int_{\R^d\setminus B_T}\rho_2(\xi)
|\xi|^2 + \psi_{\e_n}(\xi)|\xi| + \eta_{\eps_n}(\xi)\,\dx\xi.
\end{align*}

By \eqref{finite momenta}, \eqref{psi infinito}, and \eqref{eta infinito} there exists $T^*=T^*(\delta)$ such that
\begin{equation*}
\limsup_{n\to+\infty} \int_{\R^d\setminus B_{T^*}} \rho_2(\xi)|\xi|^2+\psi_{\e_n}(\xi)|\xi| + \eta_{\eps_n}(\xi)\,\dx\xi <\delta 
\end{equation*}
and therefore, there exists $\e^*(T^*)=\e^*(\delta)$ such that
\begin{equation*}
\int_{\R^d\setminus B_{T^*}} \rho_2(\xi)|\xi|^2+\psi_{\e_n}(\xi)|\xi| + \eta_{\eps_n}(\xi)\,\dx\xi <2\delta 
\end{equation*}
for every $\e_n<\e^*$; hence, we obtain \begin{equation*}
F_{\eps_n}(u_n,U)  \leq F_{\eps_n}^T(u_n,U) + C \bigl(S + |\Omega|+\sup_{n}(\|u_n\|^2_{L^\infty(\Omega; \Rm)}+ \|u_n\|_{L^\infty(\Omega; \Rm)})\bigr) \delta
\end{equation*}
for every $T>T^*$ and $\e_n<\e^*$, concluding the proof.
\end{proof}

    Before proceeding with the proof of Proposition \ref{proposition: truncated functionals}, we make an observation that will come to use multiple times.
\begin{Remark}
    \label{L infty recovery}
   
    Consider $u \in L^\infty(\Omega;\Rm)$ and let $\{u_n\}_n$ be such that
    \begin{equation*}
    u_n\to u \text{ in } L^1_{\rm loc}(\Omega;\Rm) \quad \text{ and } \quad  F_{\e_n}(u_n, U) \to F'(u,U) \quad \text{ as } n\to+\infty.
    \end{equation*}
     In light of \eqref{contraction}, by a truncation argument we can always assume that for every $n\in\N$ we have $\|u_n\|_{L^\infty(\Omega;\Rm)}\leq \sqrt{m}\|u\|_{L^\infty(\Omega;\Rm)}$. 
    Note that an analogous property holds if we replace the $\Gamma$-$\liminf$ with the $\Gamma$-$\limsup$; that is, considering $F''$ in place of $F'$. Moreover, the same conclusions hold for $F'^{,T}$ and $F''^{,T}$ if we consider $\{F^T_\e\}_\e$ in place of $\{F_\e\}_\e$.
\end{Remark}

We are finally ready to prove Proposition \ref{proposition: truncated functionals}.

\begin{proof}[Proof of Proposition \ref{proposition: truncated functionals}.]

We only prove the statement for $F'$, the other case being analogous.

Since $F_\eps^T(u,U) \leq F_\eps(u,U)$ for every $u \in L^1_{\rm loc}(\Omega;\Rm)$ and $\e,T>0$, one inequality is trivial; therefore, we only prove the opposite inequality assuming first that $u\in L^\infty(\Omega;\Rm)$. 

Given $T>r_0$, let $\{u_n\}_n\subset L^1_{\rm loc}(\Omega;\Rm)$ be such that
\begin{equation}\label{truncationrecovery}
    u_n\to u \text{ in } L^1_{\rm loc}(\Omega; \Rm) \quad \text{ and } \quad  F_{\e_n}^T(u_n, U) \to F'^{,T}(u,U) \quad \text{ as } n\to+\infty.
\end{equation}
It is not restrictive to suppose that $F'^{,T}(u,U)<+\infty$ and, by Remark \ref{L infty recovery}, we may also assume 
\begin{equation}\label{uniformboundrecovery}
    \sup_n\|u_n\|_{L^\infty(\Omega; \Rm)}\leq \sqrt{m}\|u\|_{L^\infty(\Omega; \Rm)}.
    \end{equation}
    Thanks to \eqref{truncationrecovery} and \eqref{uniformboundrecovery},  inequality \eqref{boundedness S} holds. Hence, by Lemma \ref{lemma F<FT}, for any $\delta>0$ it holds
\begin{equation*}
F_{\eps_n}(u_n,U) \leq F^T_{\eps_n}(u_n,U)+ C^*\delta
\end{equation*}
for every $T>T^*$ and $n$ large enough. We now pass to the liminf as $n\to+\infty$ and obtain 
\begin{equation*}
    F'(u,U)  \leq  F'^{,T}(u,U) + C^*\delta,
\end{equation*}
for every $\delta>0,\, T>T^*$;
 the conclusion follows letting first $T\to+\infty$ and then $\delta\to 0^+$.

If $u\in L^1_{\rm loc}(\Omega;\Rm)$, let $\{u^M\}_{M}$ be the sequence obtained by truncation. By the previous step we have that 
\begin{equation*}
    F'(u^M,U)= \lim_{T \to +\infty} F'^{,T}(u^M,U) = \sup_T\,  F'^{,T}(u^M,U), 
\end{equation*}
hence, using Lemma \ref{lemma:vertical truncations}, we obtain
\begin{align*}
   F'(u,U) & = \sup_M\, F'(u^M,U)  = \sup_M\, \sup_T\,  F'^{,T}(u^M,U) \\
   & =  \sup_T\, \sup_M\,  F'^{,T}(u^M,U) 
    =  \sup_T\, F'^{,T}(u,U) 
    = \lim_{T\to+\infty}\, F'^{,T}(u,U),
\end{align*}
which concludes the proof.
\end{proof}

\section{Compactness}\label{Section: Compactness} 
For this section, let $\{\e_n\}_n$ be a positive sequence converging to $0$ and recall the definitions of $F', F'', F'^{,T}, F''^{,T}$ given in Section \ref{section: auxiliary}. In next two sections we shall prove Theorem \ref{thm:compactness}. For the reader's convenience, 
we divide the proof in two different propositions, proving separately the compactness and the integral representation of the obtained $\Gamma$-limit. In this section, we address the compactness part, which is the content of Proposition \ref{prop:compactness}.

In this section, we let $C$ denote a positive constant whose values may change from line to line.

\noindent We begin by proving that the functionals $F'$ and $F''$ introduced in \eqref{def Gamma liminf and Gamma limsup} satisfy suitable upper and lower bounds.

\begin{Proposition}\label{prop: referencebound} Let $U\in \mathcal{A}_{\rm reg}(\Omega)$ and let $\{F_\e\}_\e$ be defined as in \eqref{def functionals}. There exist positive constants $c_1, c_2, \Lambda$ independent of $U$ such that 
\begin{gather} \label{referenceboundlower}
        c_1{\rm MS}_1(u,U)\leq F'(u,U) \text{ for all $u\in L^1_{\rm loc}(\Omega;\R^m)$},\\    \label{referenceboundhigher}
          F''(u,U)\leq c_2 H(u,U)+\Lambda|U| \text{ for all }u\in L^1_{\rm loc}(\Omega;\Rm)\cap{\rm BV}(U;\Rm).
    \end{gather}
    In particular, if $F'(u,U)<+\infty,$ then $u\in {\rm GSBV}^2(U; \Rm)$. 
\end{Proposition}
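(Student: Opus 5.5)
The plan is to compare $F_{\e_n}$ with the reference functionals through the pointwise inequality \eqref{comparison functionals}, and then transfer that comparison to the $\Gamma$-limits using Remark \ref{rmk: vector bounds}. By that remark, \eqref{lower bound geps} and \eqref{upper bound heps} hold under \eqref{finite momenta} alone, thanks to Proposition \ref{proposition: truncated functionals}.

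\emph{Lower bound.} Take $u\in L^1_{\rm loc}(\Omega;\Rm)$ and any sequence $u_n\to u$ in $L^1_{\rm loc}(\Omega;\Rm)$. From \eqref{comparison functionals} we have $G_{1,\e_n}(u_n,U)\le F_{\e_n}(u_n,U)$, so
\[
\liminf_{n\to+\infty} F_{\e_n}(u_n,U)\ge \liminf_{n\to+\infty} G_{1,\e_n}(u_n,U)\ge \Gamma\text{-}\liminf_{\e\to0^+}G_{1,\e}(u,U)\ge c_1{\rm MS}_1(u,U).
\]
One point needs care: the $\Gamma$-$\liminf$ along the family $\e\to0^+$ is bounded above by the one along the subsequence $\e_n$, so the inequality transfers. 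Taking the infimum over all such sequences gives \eqref{referenceboundlower}. The final claim then follows at once: if $F'(u,U)<+\infty$, then ${\rm MS}_1(u,U)<+\infty$, and by the definition of ${\rm MS}_1$ this means $u\in{\rm GSBV}^2(U;\Rm)$.

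\emph{Upper bound.} Take $u\in L^1_{\rm loc}(\Omega;\Rm)\cap{\rm BV}(U;\Rm)$. By \eqref{upper bound heps} there is a sequence $v_n\to u$ in $L^1_{\rm loc}(\Omega;\Rm)$ with $\limsup_n H_{\e_n}(v_n,U)\le c_2H(u,U)+o(1)$. This uses the fact that the $\Gamma$-$\limsup$ along the subsequence is at most the one along the whole family, together with an approximation argument if the infimum in \eqref{def Gammalimsup} is not attained. Then \eqref{comparison functionals} gives
\[
F_{\e_n}(v_n,U)\le H_{\e_n}(v_n,U)+\|\eta_{\e_n}\|_{L^1(\Rd)}|U|.
\]
Passing to the $\limsup$ and using \eqref{Lambda} yields $F''(u,U)\le c_2H(u,U)+\Lambda|U|$. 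The constants $c_1$, $c_2$ and $\Lambda$ do not depend on $U$, since they come from Remark \ref{rmk: vector bounds} and \eqref{Lambda}.

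The only genuinely delicate step is \eqref{upper bound heps} in the vectorial case, which the paper takes as given. If it had to be justified, I would proceed as follows. Since $g_\e(z)\le\sum_i g_\e(z_i)\cdot C$ and $|z|\le\sum_i|z_i|$, one can reduce to components and apply Theorem \ref{thm: auxiliary gamma}(ii),(iv) to the constant sequence $u_i$ for each component. This gives the pointwise limit $\lim_\e H_\e(u,U)\le C\sum_iH(u_i,U)\le c_2H(u,U)$, so the constant sequence itself serves as the recovery sequence.
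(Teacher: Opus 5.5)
Your proof is correct and is essentially the paper's argument, which simply combines the pointwise comparison \eqref{comparison functionals} with the reference bounds \eqref{lower bound geps} and \eqref{upper bound heps}. You additionally spell out the passage from the family $\e\to0^+$ to the sequence $\{\e_n\}_n$ and the use of $\limsup_n\|\eta_{\e_n}\|_{L^1(\Rd)}\le\Lambda$, which the paper leaves implicit.
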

\begin{proof}
    The proof immediately follows by \eqref{comparison functionals}, \eqref{lower bound geps}, and \eqref{upper bound heps}.
\end{proof}

In the proof of Proposition \ref{prop:compactness} below, we shall use the localisation method for $\Gamma$-convergence, which relies on De Giorgi-Letta criterion for measures, which requires one to prove superadditivity, subadditivity, and inner regularity. To this aim, 
we first prove that $F''$ satisfies a version of the {\it nested subadditivity} property. 

\begin{Lemma}\label{fundamental estimate truncated}
Let $u\in L^1_{\rm loc}(\Omega; \Rm)$, let $U',U, V', V \in \mathcal{A}(\Omega)$ such that $U' \subset\subset U$, $V' \subset\subset V$, and $U'\cup V'\in \A_{ \rm reg}(\Omega)$. Let $F''$ be the functional defined by \eqref{def Gamma liminf and Gamma limsup}: it holds
\begin{equation*}
    F''(u, U' \cup V') \le F'' (u, U) + F'' (u, V).
\end{equation*}
\end{Lemma}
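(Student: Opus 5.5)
We follow the standard De Giorgi-type cut-off argument in the non-local setting, first for truncated functionals and bounded functions. We then pass to the full functional through Proposition \ref{proposition: truncated functionals} and Lemma \ref{lemma:vertical truncations}.

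\emph{Reduction to bounded $u$ and finite range.} We may assume $F''(u,U)+F''(u,V)<+\infty$. By Lemma \ref{lemma:vertical truncations}, $F''(u,U'\cup V')=\lim_M F''(u^M,U'\cup V')$ and $F''(u^M,\cdot)\le F''(u,\cdot)$, so it suffices to treat $u\in L^\infty(\Omega;\Rm)$. Take recovery sequences $u_n\to u$, $v_n\to u$ with $\limsup_n F_{\e_n}(u_n,U)=F''(u,U)$ and $\limsup_n F_{\e_n}(v_n,V)=F''(u,V)$. By Remark \ref{L infty recovery} they are uniformly bounded in $L^\infty$ by $\sqrt m\|u\|_\infty$. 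Since $U'\cup V'$ has Lipschitz boundary, Proposition \ref{proposition: truncated functionals} and Lemma \ref{lemma F<FT} (applied on $U'\cup V'$ to the glued sequence below, which is also uniformly bounded) give the following. For any $\delta>0$, $F_{\e_n}$ on $U'\cup V'$ exceeds $F_{\e_n}^T$ by at most $C^*\delta$ for $T$ large, provided $\sup_n H^{r_0}_{\e_n}$ is bounded. That bound follows from the bound on $G^{r_0}_{\e_n}\le c_0^{-1}F_{\e_n}$ via Proposition \ref{prop: interpolation}. So it is enough to estimate $F^T_{\e_n}$ on $U'\cup V'$ for a fixed large $T$.

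\emph{Gluing.} Let $d_0:=\mathrm{dist}(U',\Rd\setminus U)>0$. Fix $N\in\N$ and layers $U_j:=\{x:\mathrm{dist}(x,U')<jd_0/N\}$ for $j=1,\dots,N$, with cut-offs $\varphi_j$ equal to $1$ on $U_{j}$, supported in $U_{j+1}$, and $|\nabla\varphi_j|\le 2N/d_0$. Set $w_n^j:=\varphi_ju_n+(1-\varphi_j)v_n$. For $\e_nT<d_0/N$ we split the interaction set $\{(x,x+\e_n\xi):|\xi|<T\}$ of $F^T_{\e_n}(w^j_n,U'\cup V')$ into three parts. Pairs with both points in $U_j$ see only $u_n$ and are bounded by $F^T_{\e_n}(u_n,U)$. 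Pairs with both points off $\overline{U_{j+1}}$ see only $v_n$ and, inside $U'\cup V'$, are bounded by $F^T_{\e_n}(v_n,V)$; here we use $V'\subset\subset V$ and that points of $V'$ off $U'$ lie in $V$. The remaining pairs have at least one point in the strip $S_j:=U_{j+2}\setminus \overline{U_{j-1}}$, intersected with $V'$. On those pairs we use the upper bound \eqref{comparison densities} together with the elementary splitting
\[
w^j_n(y)-w^j_n(x)=\varphi_j(y)(u_n(y)-u_n(x))+(1-\varphi_j(y))(v_n(y)-v_n(x))+(\varphi_j(y)-\varphi_j(x))(u_n(x)-v_n(x)).
\]
The monotonicity \eqref{monotonicity} and Lemma \ref{lemma: stima elementare} (and its analogue for $|z|$) then control the strip energy by $C\bigl(H^T_{\e_n}(u_n,S_j)+H^T_{\e_n}(v_n,S_j)\bigr)+C(N/d_0)^2T^2\int_{S_j}|u_n-v_n|^2\,\dx x$, plus $C(N/d_0)T\int_{S_j}|u_n-v_n|\,\dx x+\Lambda|S_j|$ coming from the $\psi,\eta$ terms. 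The constants depend on $\rho_2,\psi_{\e}$, and $T$, using \eqref{finite momenta} and \eqref{psi integrale}.

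\emph{Averaging, and the main obstacle.} The strips overlap boundedly, at most three at a time, so averaging over $j$ and choosing the best $j=j(n)$ gives
\[
F^T_{\e_n}(w_n,U'\cup V')\le F^T_{\e_n}(u_n,U)+F^T_{\e_n}(v_n,V)+\frac{C}{N}\bigl(H^T_{\e_n}(u_n,U)+H^T_{\e_n}(v_n,V)\bigr)+C_{N,T}\int_{U\cap V}|u_n-v_n|^2+\Lambda|U\setminus U'|.
\]
The main obstacle is controlling $H^T_{\e_n}(u_n,U)+H^T_{\e_n}(v_n,V)$ uniformly in $n$. We get this from the $L^\infty$ bound and Proposition \ref{prop: interpolation}, which reduces it to $G^T_{\e_n}$. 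We then bound $G^T_{\e_n}$ by $G^{r_0}_{\e_n}$ plus $L^2$ norms using Corollary \ref{corollary : short range}, and in turn $G^{r_0}_{\e_n}$ by $c_0^{-1}F_{\e_n}$. The $L^2$ difference tends to zero because $u_n-v_n\to0$ in $L^1$ on $\overline{U_N}\cap V$ while staying bounded in $L^\infty$. Also $w_n\to u$ in $L^1_{\rm loc}$. Hence $F''(u,U'\cup V')\le F''(u,U)+F''(u,V)+C/N+C^*\delta+\Lambda|U\setminus U'|$.

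The term $\Lambda|U\setminus U'|$ comes from the $\eta$ contribution of the strip and is a genuine nuisance. We remove it by running the layers only inside a thin neighbourhood $U''$ of $U'$ with $U'\subset\subset U''\subset\subset U$. This is allowed because the argument only uses $U''$ in place of $U$ and $F''(u,U'')\le F''(u,U)$ by monotonicity in the set. Letting $U''\downarrow U'$, then $N\to\infty$ and $\delta\to0$, concludes the proof.
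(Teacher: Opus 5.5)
Your overall strategy matches the paper's: truncation to $L^\infty$, $L^\infty$-bounded recovery sequences, layered cut-offs, the splitting of the difference quotient with Lemma \ref{lemma: stima elementare}, pigeonholing over the layers, and Proposition \ref{proposition: truncated functionals} on the Lipschitz set $U'\cup V'$ to remove $T$. The gap is in the step you yourself call the main obstacle.

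You bound the strip energies through $H^T_{\e_n}(u_n,U)+H^T_{\e_n}(v_n,V)$, and you control these with Corollary \ref{corollary : short range}. That corollary requires $U,V\in\A_{\rm reg}(\Omega)$. The statement, however, only assumes that $U'\cup V'$ is Lipschitz, so $U$ and $V$ are arbitrary open sets. For such sets the global bound you claim is actually false. Take $f_\e=\rho_1 g_\e$ with $\rho_1=c_0\chi_{B_{r_0}}$ and $T>2r_0$. Let $U$ be a union of parallel strips which, for each $n$, contains a number of strips of width $\e_nT$ separated by gaps of width $2\e_nr_0$, with this number diverging in $n$. Let $u_n\to0$ be constant on each strip and alternate between $0$ and $1$ only across these gaps. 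Then $F_{\e_n}(u_n,U)=c_0G^{r_0}_{\e_n}(u_n,U)=0$, so $\{u_n\}_n$ is an admissible recovery sequence for $F''(0,U)=0$, yet $G^T_{\e_n}(u_n,U)\to+\infty$.

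The remedy, which is what the paper does, is to localise. You never need the energies on all of $U$ or $V$, only on the strip region. Restricting to $j\le N-4$, that region lies in $E:=U_{N-2}\cap V'$, and for $n$ large you have $E+(0,\e_n)\xi+B_{\e_nr_0}\subset U\cap V$ for all $\xi\in B_T$. Lemma \ref{lemma: short range interactions} needs no boundary regularity, and it gives, for every $\xi\in B_T$,
\[
\int_E g_{\e_n}\Bigl(\frac{u_n(x+\e_n\xi)-u_n(x)}{\e_n}\Bigr)\dx x\le C_2(|\xi|^2+1)\,G^{r_0}_{\e_n}(u_n,U)\le C(|\xi|^2+1).
\]
The same holds for $v_n$ on $V$, and for the first-order terms via Proposition \ref{prop: interpolation}. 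You then integrate these pointwise-in-$\xi$ bounds against $\rho_2$ and $\psi_{\e_n}$ over $B_T$, using \eqref{finite momenta}, \eqref{psi momento}, \eqref{psi integrale}. Do not compare with the unweighted $H^T_{\e_n}$, since $\rho_2$ and $\psi_{\e_n}$ need not be bounded on $B_T$.

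An alternative fix is to first shrink to Lipschitz sets $U'\subset\subset U''\subset\subset U$ and $V'\subset\subset V''\subset\subset V$, using monotonicity of $F''(u,\cdot)$; after that your appeal to the corollary is legitimate.

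Finally, the term $\Lambda|U\setminus U'|$ is not a real nuisance, and the $U''\downarrow U'$ detour is unnecessary. The $\eta$-contributions of the strips sum to at most $3\|\eta_{\e_n}\|_{L^1(\Rd)}|\Omega|$, so they are averaged to $O(1/N)$ together with the other strip terms, exactly as in the paper.
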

\begin{proof}
Assume first that $u\in L^\infty(\Omega; \Rm)$ and, without loss of generality, also assume that $F''(u,U)$ and $F''(u,V)$ are both finite.
Consider two sequences  $u_n, v_n \to u$ in $L^1_{\rm loc}(\Omega; \Rm)$ such that
\begin{align*}
   \lim_{n \to+\infty} F_{\eps_n} (u_n, U)= F''(u, U)\  \quad \text{ and } \quad 
   \lim_{n\to +\infty} F_{\eps_n} (v_n, V)=  F''(u, V),
\end{align*}
and, in light of Remark \ref{L infty recovery}, assume that 
\begin{equation*}
\sup_n\|u_n\|_{L^{\infty}(\Omega; \Rm)}\leq \sqrt{m}\|u\|_{L^\infty(\Omega; \Rm)}, \quad \sup_n\|v_n\|_{L^{\infty}(\Omega; \Rm)}\leq \sqrt{m}\|u\|_{L^\infty(\Omega; \Rm)}.   
\end{equation*}
Fix $0< R < \min\{\text{dist}(U', \R^d\setminus U),\text{dist}(V', \R^d\setminus V)\}$, let $N>2$ be a positive integer, and let
\begin{align*}
    U_i\colon = \Big\{x \in \R^d \colon \text{dist}(x, U') < \frac{iR}{N}\Big\}, \quad i\in\{1,...,N\}.
\end{align*}
For all $i\in\{1,...,N-1\}$ we note that $U' \subset\subset U_i \subset\subset U_{i+1} \subset\subset U$, and we consider a cut-off function $\phi_i \in C^\infty_c (\R^d; [0,1])$ such that
\begin{equation}\label{properties of cutoffs}
\begin{cases}
     \phi_i= 1 &  \text{in }  \overline{U}_i,\\
      \phi_i= 0 & \text{in } \R^d \setminus U_{i+1},\\
       |\nabla \phi_i| \le \frac{2N}{R} & \text{in } \Rd.
\end{cases}
\end{equation}
Set $w_n^i:= \phi_iu_n + (1-\phi_i)v_n$ and note that, for any $\xi\in \Rd, n\in\N,$ and $x\in\Omega_{\e_n}(\xi)$, we have 
\begin{align}\notag
    \frac{w_n^i(x+\eps_n \xi)- w_n^i (x)}{\eps_n} & = \phi_i(x) \frac{u_n(x+\eps_n \xi)- u_n (x)}{\eps_n} + (1-\phi_i(x))\frac{v_n(x+\eps_n \xi)- v_n (x)}{\eps_n} \\\label{decomposition of w}
    & \quad + \frac{\phi_i(x+\eps_n \xi)- \phi_i (x)}{\eps_n} \left(u_n(x+\eps_n \xi) - v_n (x+\eps_n \xi) \right)
\end{align}
and, in particular,
\begin{equation}\label{specifying the values of weps}
    \dfrac{w_n^i(x+\eps_n \xi)- w_n^i (x)}{\eps_n} =
     \begin{cases}\displaystyle
      \frac{u_n(x+\eps_n \xi)- u_n (x)}{\eps_n} & \text{ if } x \in (U_i)_{\e_n}(\xi), \\[6pt] \displaystyle
      \frac{v_n(x+\eps_n\xi)- v_n (x)}{\eps_n} & \text{ if } x \in (\Omega \setminus \overline{U}_{i+1})_{\eps_n}(\xi).
    \end{cases}
 \end{equation}
We observe that, using \eqref{properties of cutoffs} and \eqref{decomposition of w}, we have
\begin{align}\notag  
 \Big|\dfrac{w_n^i(x+\eps_n \xi)- w_n^i (x)}{\eps_n}\Big| &\le \Big|\dfrac{u_n(x+\eps_n \xi)- u_n (x)}{\eps_n}\Big|
 + \Big|\dfrac{v_n(x+\eps_n \xi)-v_n(x)}{\eps_n}\Big| \\ \ \label{triangolare}
 &\quad + \Big(\frac{2N}{R}\Big)|\xi||u_n(x+\eps_n \xi) - v_n (x+\eps_n \xi)|;
 \end{align}
and similarly, recalling the definition of $g_\e$ and exploiting Lemma \ref{lemma: stima elementare}, we get
 \begin{align}\notag  
 g_{\eps_n} \Big(\dfrac{w_n^i(x+\eps_n \xi)- w_n^i (x)}{\eps_n}\Big) &\le 3 g_{\eps_n}\Big(\dfrac{u_n(x+\eps_n \xi)- u_n (x)}{\eps_n}\Big) 
 + 3g_{\eps_n}\Big(\dfrac{v_n(x+\eps_n \xi)-v_n(x)}{\eps_n}\Big) \\ \ \label{application of k convexity}
 &\quad + 3\Big(\frac{2N}{R}\Big)^2|\xi|^2|u_n(x+\eps_n \xi) - v_n (x+\eps_n \xi) |^2,
 \end{align}
for every $\xi\in\Rd$, $n\in\N$, and $x\in \Omega_{\e_n}(\xi)$.

For $i\in\{1,...,N-1\}$ we set  
\begin{equation}\notag
S_{n, \xi}^i :=(U' \cup V')_{\eps_n} (\xi) \setminus \bigl[( {U_i})_{\eps_n}(\xi) \cup (\Omega \setminus \overline{U}_{i+1})_{\eps_n}(\xi) \bigr]\end{equation} 
and we observe that if $\xi\in B_T$ for some $T>0$ and $\e_n T< \frac{R}{N}$, then 
\begin{equation}
\label{inclusione gusci}
S_{n, \xi}^i \subseteq [(U_{i+1}\setminus \overline{U}_{i})+(-\e_n,\e_n)\xi]\cap (V')_{\e_n}(\xi),
\end{equation}
and, in particular,
\begin{equation}\label{inclusione strip}
    \bigcup_{i=1}^{N-4} S_{n, \xi}^i \subseteq (U_{N-2}\setminus \overline{U'})\cap V'.
\end{equation}
Let $T>0$ be fixed. By \eqref{specifying the values of weps} and \eqref{comparison densities}, we have
 \begin{align*}
  F_{\eps_n}^T (w_n^i, U'\cup V') & = \int_{B_T} \int_{(U_i\cap( U'\cup V'))_{\eps_n}(\xi)} f_{\e_n} \Bigl(x, \xi, \dfrac{u_n(x+\eps_n\xi)- u_n (x)}{\eps_n}\Bigr) \dx x\, \dx \xi \\
 & \quad + \int_{B_T} \int_{((\Omega\setminus \overline{U}_{i+1})\cap V')_{\e_n}(\xi)} f_{\e_n} \Big(x, \xi, \dfrac{v_n(x+\eps_n \xi)- v_n (x)}{\eps_n} \Big) \dx x\, \dx\xi  \\
 & \quad + \int_{B_T} \int_{S_{n, \xi}^i} f_{\e_n} \Big(x, \xi, \dfrac{w_n^i(x+\eps_n \xi)- w_n^i (x)}{\eps_n} \Big) \dx x\, \dx \xi \\
 & \leq F_{\e_n} (u_n, U) + F_{\e_n} (v_n, V) \\
 & \quad + \int_{B_T} \rho_2(\xi) \int_{S_{n, \xi}^i}  g_{\e_n} \Bigl(\dfrac{w_n^i(x+\eps_n \xi)- w_n^i (x)}{\eps_n}\Bigr)\dx x\, \dx \xi \\
 & \quad + \int_{B_T} \psi_{\e_n}(\xi) \int_{S_{n, \xi}^i}  \Bigl|\dfrac{w_n^i(x+\eps_n \xi)- w_n^i (x)}{\eps_n}\Bigr|\dx x\, \dx \xi \\
 & \quad +\int_{B_T}\eta_{\e_n}(\xi)|S^i_{n,\xi}|\dx \xi,
\end{align*}
and then, using \eqref{application of k convexity} and \eqref{triangolare}, we get 
 \begin{align}\notag
  F_{\e_n}^T (w_n^i, U'\cup V') \leq & F_{\e_n}(u_n, U) + F_{\e_n} (v_n, V) \\\notag
 & + 3\int_{B_T}\hskip-0.25 cm \rho _2 (\xi)\! \int_{S_{n, \xi}^i}\hskip-0.25 cm g_{\e_n}\Bigl(\dfrac{u_n(x+\eps_n\xi)- u_n (x)}{\eps_n}\Bigr)  \notag
 +g_{\e_n}\Bigl(\dfrac{v_n(x+\eps_n \xi)- v_n (x)}{\eps_n}\Bigr)  \dx x\, \dx\xi \\ \notag
 &+ \frac{12 N^2}{R^2}\int_{B_T} \rho_2 (\xi)|\xi|^2 \int_{S_{n, \xi}^i} \left|u_n(x+\eps_n \xi) - v_n (x+\eps_n \xi) \right|^2 \dx x\, \dx\xi \\ \notag 
& + \int_{B_T}\hskip-0.25 cm \psi_{\e_n} (\xi) \int_{S_{n, \xi}^i}\hskip-0.25 cm \Bigl|\dfrac{u_n(x+\eps_n\xi)- u_n (x)}{\eps_n}\Bigr| + \Bigl|\dfrac{v_n(x+\eps_n \xi)- v_n (x)}{\eps_n}\Bigr| \,\dx x\, \dx\xi \\ \notag 
 & +\frac{2 N}{R}\int_{B_T} \psi_{\e_n}  (\xi)|\xi| \int_{S_{n, \xi}^i} \left|u_n(x+\eps_n \xi) - v_n (x+\eps_n \xi) \right| \,\dx x\, \dx\xi \\ \label{prima disuguaglianza subadd}
 & +\int_{B_T}\eta_{\e_n}(\xi)|S^i_{n,\xi}|\,\dx \xi.
 \end{align}
We estimate the summands in \eqref{prima disuguaglianza subadd}. 

As for the integrals in the third and fifth line of \eqref{prima disuguaglianza subadd}, we use \eqref{finite momenta} and \eqref{inclusione gusci} to get that, for $n$ large enough,
\begin{align}\notag
   &\frac{12 N^2}{R^2}\int_{B_T} \rho_2(\xi)|\xi|^2 \int_{S_{n, \xi}^i} \left|u_n(x+\eps_n \xi) - v_n (x+\eps_n \xi) \right|^2 \dx x\, \dx\xi \\ \notag
   & \leq \frac{12 N^2}{R^2}\Bigl(\int_{\R^d} \rho_2  (\xi)|\xi|^2\, \dx \xi\Bigr)\|u_n - v_n\|_{L^2(V';\Rm)}^2 \\ \notag
   & \leq \frac{24\sqrt{m} N^2}{R^2}\Bigl( \int_{\R^d}\rho_2(\xi)|\xi|^2\, \dx\xi\Bigr)  \|u\|_{L^\infty(\Omega;\Rm)}\|u_n-v_n\|_{L^1( V';\Rm)}\\\label{piccolezza quadrato subadditivita}
   &\leq C N^2\|u_n-v_n\|_{L^1(V';\Rm)},
\end{align}
and similarly, also using \eqref{psi momento},
\begin{align}  \notag
    & \frac{2 N}{R}\int_{B_T} \psi_{\e_n}(\xi)|\xi| \int_{S_{n, \xi}^i} |u_n(x+\eps_n \xi) - v_n(x+\eps_n \xi) | \dx x\, \dx\xi \\ \label{piccolezza subadditivita} 
    &\leq CN \|u_n-v_n\|_{L^1(V';\Rm)}.
\end{align}

In order to estimate the integrals in the second, fourth, and sixth line of \eqref{prima disuguaglianza subadd}, we note that, by \eqref{inclusione gusci},  for $ i\in\{1,...,N-4\}$ the sets $S^i_{n,\xi}$ intersect at most pairwise  when $n$ is large enough.
As a consequence, setting $E:=U_{N-2}\cap V'$ and using \eqref{inclusione strip}, we have
\begin{align}\notag
  &  \sum_{i=1}^{N-4}\Bigl[ \int_{B_T} \rho_2  (\xi) \int_{S_{n, \xi}^i}  g_{\e_n}\Bigl(\dfrac{u_n(x+\eps_n \xi)- u_n (x)}{\eps_n}\Bigr)  + g_{\e_n}\Bigl(\dfrac{v_n(x+\eps_n \xi)- v_n (x)}{\eps_n}\Bigr)  \dx x\, \dx\xi \\ \notag
& \quad + \int_{B_T}\hskip-0.25 cm \psi_{\e_n} (\xi) \int_{S_{n, \xi}^i}\hskip-0.25 cm \Bigl|\dfrac{u_n(x+\eps_n\xi)- u_n (x)}{\eps_n}\Bigr| + \Bigl|\dfrac{v_n(x+\eps_n \xi)- v_n (x)}{\eps_n}\Bigr| \,\dx x\, \dx\xi +\int_{B_T}\eta_{\e_n}(\xi)|S^i_{n,\xi}|\dx \xi\Bigr]\\ \notag
    &\le 2\int_{B_T} \rho_2  (\xi) \int_{E} g_{\e_n}\Bigl(\dfrac{u_n(x+\eps_n \xi)- u_n (x)}{\eps_n}\Bigr) + g_{\e_n}\Bigl(\dfrac{v_n(x+\eps_n \xi)- v_n (x)}{\eps_n}\Bigr) \,\dx x \,\dx 
    \xi \\  \label{conto su strip}
    & \quad + 2\int_{B_T} \psi_{\e_n} (\xi) \int_{E} \Bigl|\dfrac{u_n(x+\eps_n\xi)- u_n (x)}{\eps_n}\Bigr| + \Bigl|\dfrac{v_n(x+\eps_n \xi)- v_n (x)}{\eps_n}\Bigr| \,\dx x\, \dx\xi +4\Lambda|\Omega|,  
\end{align}
with $\Lambda$ as in \eqref{Lambda}.

Let now $r_0$ be as in \eqref{non zero in zero}. Assuming $\eps_n (r_0+T) < \text{dist} (V', \R^d \setminus \Omega)$ and $\xi\in B_T$, we have 
\begin{align*}
    \text{dist}(E+(0,\eps_n)\xi, \R^d \setminus \Omega) \ge \text {dist}(V'+B_{\eps_n T}, \R^d \setminus \Omega)
    \ge \text {dist}(V', \R^d \setminus \Omega) - \eps_n T > \eps_n r_0;
\end{align*}
hence, we are in position to apply Lemma \ref{lemma: short range interactions} which yields
\begin{align}\notag
    & \hspace{-0.5 cm}\int_{E} g_{\e_n}\Big(\dfrac{u_n(x+\eps_n \xi)- u_n (x)}{\eps_n}\Big)  + g_{\e_n}\Big(\dfrac{v_n(x+\eps_n\xi)- v_n (x)}{\eps_n}\Big)\,  \dx x \\ \notag 
    & \le C_2 (\left|\xi\right|^2+1)(G_{\e_n}^{r_0}(u_n, E_{{\e_n},\xi}) 
     + G_{\e_n}^{r_0}(v_n, E_{{\e_n},\xi})) \\ \label{altro conto strip} & \leq  C_2(\left|\xi\right|^2+1)(G_{\e_n}^{r_0}(u_n, U) + G_{\e_n}^{r_0}(v_n, V)),
\end{align}
where have used that
\begin{equation*}
    E_{{\e_n},\xi}:=E+(0,\eps_n)\xi +B_{\eps_n r_0} \subset U \cap V
\end{equation*}
upon further assuming that $n$ is large enough so that $\eps_n(r_0+T)< 2\frac{R}{N}$. 
 
By \eqref{non zero in zero} and \eqref{comparison functionals}, we have that $c_0G_{\e_n}^{r_0}\leq G_{1,\e_n}\leq F_{\e_n}$; therefore, recalling that
\begin{equation*}
    \sup_n F_{\e_n}(u_n, U)<+\infty, \qquad \sup_n F_{\e_n}(v_n, V)<+\infty,
\end{equation*}
we deduce from \eqref{altro conto strip} that
\begin{equation}\label{bound sulla strip}
    \int_{E} g_{\e_n}\Big(\dfrac{u_n(x+\eps_n \xi)- u_n (x)}{\eps_n}\Big)  + g_{\e_n}\Big(\dfrac{v_n(x+\eps_n\xi)- v_n (x)}{\eps_n}\Big)\,  \dx x  \leq  C (\left|\xi\right|^2+1),
\end{equation}
where the constant $C$ is independent of $n$ and $\xi$. 

With a similar argument based on Lemma \ref{lemma: short range interactions}, we get
\begin{align}
    \notag \int_{E} \Bigl|\dfrac{u_n(x+\eps_n\xi)- u_n (x)}{\eps_n}\Bigr| + \Bigl|\dfrac{v_n(x+\eps_n \xi)- v_n (x)}{\eps_n}\Bigr| \,\dx x & \leq  C_2(\left|\xi\right|+1)(P_{\e_n}^{r_0}(u_n, U) + P_{\e_n}^{r_0}(v_n, V)) \\ \notag
    & \leq  C_2(\left|\xi\right|+1)(H_{\e_n}^{r_0}(u_n, U) + H_{\e_n}^{r_0}(v_n, V)) \\ \label{altro conto strip bis}
    & \leq C(|\xi|+1),
\end{align}
where we have used Proposition \ref{prop: interpolation} in the last inequality.

Putting together \eqref{conto su strip}, \eqref{bound sulla strip}, and \eqref{altro conto strip bis} we deduce that, upon taking $\e_n$ small enough,
\begin{align*}
  \sum_{i=1}^{N-4}\Bigl[ &\int_{B_T} \rho_2  (\xi) \int_{S_{n, \xi}^i}  g_{\e_n}\Bigl(\dfrac{u_n(x+\eps_n \xi)- u_n (x)}{\eps_n}\Bigr)  + g_{\e_n}\Bigl(\dfrac{v_n(x+\eps_n \xi)- v_n (x)}{\eps_n}\Bigr)  \dx x\, \dx\xi \\ \notag
& \quad + \int_{B_T}\hskip-0.25 cm \psi_{\e_n} (\xi) \int_{S_{n, \xi}^i}\hskip-0.25 cm \Bigl|\dfrac{u_n(x+\eps_n\xi)- u_n (x)}{\eps_n}\Bigr| + \Bigl|\dfrac{v_n(x+\eps_n \xi)- v_n (x)}{\eps_n}\Bigr| \,\dx x\, \dx\xi \\ \notag
    & \quad +\int_{B_T}\eta_{\e_n}(\xi)|S^i_{n,\xi}|\dx \xi\Bigr]\\ \notag
& \leq C \int_{B_T} \rho_2(\xi) (|\xi|^2+1) + \psi_{\e_n}(\xi)(|\xi|+1) \,\dx\xi + 4\Lambda|\Omega| ,
\end{align*}
where the constant $C$ is independent of $n$. Recalling \eqref{finite momenta}, \eqref{psi momento}, and \eqref{psi integrale}, we deduce that there exists $i^* \in \{1,...,N-4\}$ such that 
\begin{align}
\notag &\int_{B_T} \rho_2  (\xi) \int_{S_{n, \xi}^{i^*}}  g_{\e_n}\Bigl(\dfrac{u_n(x+\eps_n \xi)- u_n (x)}{\eps_n}\Bigr)  + g_{\e_n}\Bigl(\dfrac{v_n(x+\eps_n \xi)- v_n (x)}{\eps_n}\Bigr)  \dx x\, \dx\xi \\ \notag
& \quad + \int_{B_T} \psi_{\e_n} (\xi) \int_{S_{n, \xi}^{i^*}} \Bigl|\dfrac{u_n(x+\eps_n\xi)- u_n (x)}{\eps_n}\Bigr| + \Bigl|\dfrac{v_n(x+\eps_n \xi)- v_n (x)}{\eps_n}\Bigr| \,\dx x\, \dx\xi \\ \label{ultima stima strip}
    & \quad +\int_{B_T}\eta_{\e_n}(\xi)|S^{i^*}_{n,\xi}|\,\dx \xi
\leq \frac{C}{N-4}
\end{align}
for all $n$ large enough.

Finally, substituting \eqref{piccolezza quadrato subadditivita}, \eqref{piccolezza subadditivita}, and \eqref{ultima stima strip} in \eqref{prima disuguaglianza subadd}, we get
\begin{equation*}
    \notag F_{\e_n}^T (w_n^{i^*}, U'\cup V') \leq F_{\e_n}(u_n, U) + F_{\e_n}(v_n, V) +\frac{C}{N-4}+ C(N^2+N)\|u_n - v_n\|_{L^1(V';\Rm)},
\end{equation*}
then, letting $n \to +\infty$ and $N\to +\infty$ in this order, recalling that $u_n,v_n, w_n^{i^*}\to u$ in $L^1_{\rm loc}(\Omega; \Rm)$, we achieve 
\begin{equation*}
    F''^{,T}(u,U'\cup V') \leq F''(u,U)+ F''(u,V)
\end{equation*}
and the thesis follows by Proposition \ref{proposition: truncated functionals} since $U'\cup V'\in \A_{\rm reg}(\Omega)$.

If $u\in L^1_{\rm loc}(\Omega;\Rm)$, by the previous case we have
\begin{equation*}
      F''(u^M, U' \cup V') \le F'' (u^M, U) + F'' (u^M, V)
\end{equation*}
for all $M>0$; hence, the conclusion follows by Lemma \ref{lemma:vertical truncations} letting $M\to+\infty$.
\end{proof}

We now investigate the inner regularity of the functionals $F'$ and $F''$.
\begin{Lemma}\label{lemma inner regulairty}
  Let $u\in L^1_{\rm loc}(\Omega;\Rm)$. Then $F'(u,\cdot)$ and $F''(u,\cdot)$ are increasing set functions and are inner regular on $A_{\rm reg}(\Omega)$; that is, $F'(u, \emptyset)=F''(u,\emptyset)=0$, given $U, V\in \A(\Omega)$ such that $U \subseteq V$, it holds
    \begin{equation*}
        F'(u,U)\leq F'(u, V), \qquad F''(u,U)\leq F''(u,V),
    \end{equation*}
    and, for every $U\in\A_{\rm reg}(\Omega)$, it holds
    \begin{gather}
    \label{F'innerregular}
        F'(u,U)=\sup\{F'(u,U') : U'\subset \subset U \text{ with }U'\in \mathcal{A}(\Omega)\},\\\label{F''innerregular}
        F''(u,U)=\sup\{F''(u,U') : U'\subset \subset U \text{ with }U'\in \mathcal{A}(\Omega)\}.
    \end{gather}
\end{Lemma}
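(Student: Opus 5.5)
The trivial properties come first. Since $f_\e\geq 0$ and $U_\e(\xi)\subseteq V_\e(\xi)$ whenever $U\subseteq V$, we have $F_{\e_n}(w,U)\leq F_{\e_n}(w,V)$ for every $w$. Passing to the $\Gamma$-$\liminf$ and $\Gamma$-$\limsup$ gives monotonicity, and $F'(u,\emptyset)=F''(u,\emptyset)=0$ is immediate. By monotonicity, the inequality ``$\geq$'' in \eqref{F'innerregular} and \eqref{F''innerregular} is automatic. So it remains to fix $U\in\A_{\rm reg}(\Omega)$ and prove
\[
F''(u,U)\leq \sup_{U'\subset\subset U}F''(u,U'),
\]
and the same for $F'$. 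We may assume the supremum is finite. By Lemma \ref{lemma:vertical truncations}, and since the supremum for $u^M$ is at most the supremum for $u$, we may also assume $u\in L^\infty(\Omega;\Rm)$.

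The plan is to combine the nested subadditivity of Lemma \ref{fundamental estimate truncated} with the upper bound \eqref{referenceboundhigher} on a thin boundary layer. From the finite supremum and \eqref{referenceboundlower} we get $u\in{\rm GSBV}^2(U';\Rm)$ for every $U'\subset\subset U$, with uniformly bounded Mumford--Shah energy. Hence $u\in{\rm GSBV}^2(U;\Rm)$, and since $u$ is bounded, $u\in{\rm SBV}^2(U;\Rm)\cap{\rm BV}(U;\Rm)$. Thus $H(u,\cdot)+\Lambda\Ld$ is a finite Borel measure on $U$. Fix $\delta>0$ and choose a compact $K\subset U$ with $c_2H(u,U\setminus K)+\Lambda|U\setminus K|<\delta$.

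Next we build the sets. Choose open sets $U'\subset\subset U''\subset\subset U$ with $K\subset U'$. Also choose a set $V\in\A_{\rm reg}(\Omega)$ with $V\subseteq U\setminus K$ and $V'\subset\subset V$, where $V'$ is open and $U'\cup V'=U$. A concrete choice is $V:=U\setminus \overline{K'}$ with $K\subset\subset K'$, $K'$ having smooth boundary. Then $V'$ is an open set with $\overline{V'}\cap\overline{U}\subset V\cup\partial U$ covering $U\setminus U'$. Lemma \ref{fundamental estimate truncated} then yields
\[
F''(u,U)\leq F''(u,U'')+F''(u,V)\leq \sup_{W\subset\subset U}F''(u,W)+c_2H(u,V)+\Lambda|V|,
\]
and the last two terms are below $\delta$.

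The main obstacle is the requirement $V'\subset\subset V$ in Lemma \ref{fundamental estimate truncated}. This is not compatible with $V'$ covering a neighbourhood of $\partial U$ inside $U$. I would try two ways around it. The first is to apply the lemma with ambient set $\Omega$, noting that $\subset\subset$ is meant relative to $\Omega$: take $V$ a neighbourhood of $\partial U$ in $\Omega$, which works if $\partial U\cap\partial\Omega=\emptyset$. Otherwise, I would redo the cut-off argument of Lemma \ref{fundamental estimate truncated} directly with $U'\cup V'=U$ and $V=U\setminus K$. In that argument the only place the compact inclusion of $V'$ is used is to keep $E+(0,\e_n)\xi+B_{\e_n r_0}$ inside the domain. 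This can be replaced by Corollary \ref{corollary : short range} (extension across the Lipschitz boundary of $U\setminus \overline{K'}$), which is available precisely because $V\in\A_{\rm reg}$. For $F'$ the same argument applies, since Lemma \ref{fundamental estimate truncated} holds verbatim with $F'$ in place of the first $F''$: take liminf recovery sequences on $U''$ and limsup-type sequences on $V$, bounded by $H$ through Theorem \ref{thm: auxiliary gamma}(ii),(iv). Finally, let $\delta\to0$.
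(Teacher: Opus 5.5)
Your second route is essentially the paper's proof. For bounded $u$ (the non-${\rm GSBV}^2$ case being trivial, and general $u$ recovered by truncation), the paper picks $U_\delta\subset\subset U$ with $U\setminus\overline{U}_\delta\in\A_{\rm reg}(\Omega)$ and $c_2H(u,U\setminus\overline{U}_\delta)+\Lambda|U\setminus\overline{U}_\delta|<\delta$. It then splices recovery sequences for some $U'$ with $U_\delta\subset\subset U'\subset\subset U$ and for $U\setminus\overline{U}_\delta$ via cut-offs in $U'\setminus\overline{U}_\delta$, estimates the strips as in Lemma \ref{fundamental estimate truncated}, applies \eqref{referenceboundhigher} on the Lipschitz layer, and removes the truncation in $T$ by Proposition \ref{proposition: truncated functionals} on $U$.

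Two small corrections. First, Corollary \ref{corollary : short range} is not needed: the transition strips lie compactly inside both sets, so Lemma \ref{lemma: short range interactions} suffices, and the Lipschitz regularity of the layer is used only for \eqref{referenceboundhigher}. Second, your first route, as stated, would also require you to control $u$ outside $U$ (for instance by reflecting it across $\partial U$ and using locality), since $F''(u,V)$ depends on those values.
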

\begin{proof}

The fact that both $F'(u,\cdot)$ and $F''(u,\cdot)$ are increasing set-functions follows observing that, for every $n\in\N$, we have $F_{\e_n}(u, \emptyset)=0$ and $F_{\e_n}(u, U)\leq F_{\e_n}(u,V)$ if $U\subseteq V$. 

Let $U, U'\in\A_{\rm reg}(\Omega)$ with $U' \subset\subset  U,$ and assume $u\notin {\rm GSBV}^2(U;\Rm)$. Using \eqref{referenceboundlower}, we have
\begin{align*}
    F'(u,U) \geq F'(u, U') \geq c_1{\rm MS}_1(u,U'),
\end{align*}
and by the arbitrariness of $U'$, we infer
\begin{align*}
     F'(u,U) & \geq \sup\{F'(u,U') : U'\subset \subset U \text{ with }U'\in \mathcal{A}(\Omega)\} \\
     & \geq \sup\{F'(u,U') : U'\subset \subset U \text{ with }U'\in \mathcal{A}_{\rm reg}(\Omega)\}  \\
     & \geq  c_1{\rm MS}_1(u,U)=+\infty
\end{align*}
from which \eqref{F'innerregular} readily follows. The same proof yields also \eqref{F''innerregular}.

Now we prove \eqref{F''innerregular} in the case $u\in {\rm GSBV}^2(U;\Rm) $, the proof of \eqref{F'innerregular} being analogous. We first suppose that $u\in L^\infty(\Omega; \Rm)$ and note that, since $F''(u, \cdot)$ is an increasing set-function, it is enough to prove that \begin{equation*}F''(u, U)\leq \sup\{F''(u,U') : U'\subset \subset U \text{ with }U'\in \mathcal{A}(\Omega)\}.\end{equation*}
    Let $\delta>0$. As $u\in {\rm SBV}^2(U; \Rm)$, we find an open set $U_\delta\subset \subset U$ such that $U\setminus \overline{U}_\delta\in{\A}_{\rm reg}(\Omega)$ and 
    \begin{equation}\label{AmenoAdelta}
      c_2H(u, U\setminus\overline{U}_{\delta})+\Lambda| U\setminus\overline{U}_{\delta}|<\delta,
    \end{equation}
  where $c_2$ and $\Lambda$ are the constants of Proposition \ref{prop: referencebound}.
    
    Let us consider an open set $U'$ such that $U_\delta\subset \subset U'\subset \subset U$ and let $R:=\text{dist}(U_\delta, \Omega\setminus U')$. We consider sequences $\{u_n\}_n,\{v_n\}_n\subset L^1_{\rm loc}(\Omega;\Rm)$ converging to $u$ in $L^1_{\rm loc}(\Omega;\Rm)$ and such that \begin{equation}\label{inner recovery}
      \lim_{n\to +\infty}F_{\e_n}(u_n,U')=F''(u,U'), \qquad \lim_{n\to+\infty }F_{\e_n}(v_n,U\setminus \overline{U}_{\delta})=F''(u,U\setminus \overline{U}_{\delta}), 
    \end{equation} 
and which, thanks to Remark \ref{L infty recovery}, we may also assume to be satisfying   
\begin{equation*}
        \sup_n \|u_n\|_{L^\infty(\Omega;\Rm)}\leq \sqrt{m}\|u\|_{L^\infty(\Omega;\Rm)}, \qquad  \sup_n \|v_n\|_{L^\infty(\Omega;\Rm)}\leq {\sqrt{m}}\|u\|_{L^\infty(\Omega;\Rm)}.
\end{equation*}
Letting $N$ be a positive integer, for $i\in\{1,...,N\}$, we set $U_i:=\{x\in U : \text{dist}(x, U_\delta)< \frac{iR}{N}\}$ and we introduce a cut-off function $\varphi_i \in C^\infty_c (\R^d; [0,1])$ such that
\begin{equation*}
\begin{cases}
     \phi_i= 1 &  \text{in }  \overline{U}_i,\\
      \phi_i= 0 & \text{in } \R^d \setminus U_{i+1},\\
       |\nabla \phi_i| \le \frac{2N}{R} & \text{in } \Rd.
\end{cases}
\end{equation*}
We set $w^i_n:=\phi_iu_n + (1-\phi_i)v_n$ and 
    \begin{equation*}
        S^i_{n,\xi}:=U_{\e_n}(\xi)\setminus [(U_i)_{\e_n}(\xi)\cup(U\setminus \overline{U}_{i+1})_{\e_n}(\xi)].
    \end{equation*} 
    Let us fix $T>0$. We have
    \begin{align*} \notag
    F_{\e_n}^T(w_n^{i}, U) & = F_{\e_n}^T(u_n, U_{i}) +  F_{\e_n}^T(v_n, U\setminus\overline{U}_{{i}+1}) 
      \\ \notag
      & \quad + \int_{B_T}\int_{S^{i}_{n,\xi}} f_{\e_n}\Bigl(x,\xi, \dfrac{w^i_n(x+\eps_n \xi)- w_n^i(x)}{\eps_n}\Bigr)\, \dx x\, \dx \xi \\ \notag
      & \leq  F_{\e_n}(u_n, U') +  F_{\e_n}(v_n, U\setminus\overline{U}_{\delta}) 
      \\ 
      & \quad + \int_{B_T}\int_{S^{i}_{n,\xi}} f_{\e_n}\Bigl(x,\xi, \dfrac{w^i_n(x+\eps_n \xi)- w^i_n (x)}{\eps_n}\Bigr)\, \dx x\, \dx \xi.
\end{align*}
Arguing exactly as in the part of the proof of Lemma \ref{fundamental estimate truncated} that led to \eqref{piccolezza quadrato subadditivita}, \eqref{piccolezza subadditivita}, and \eqref{ultima stima strip}, we infer the existence of $i^*\in\{1,..., N-4\}$ such that 
\begin{align*} \notag
     \int_{B_T}\int_{S^{i^*}_{n,\xi}} f_{\e_n}\Bigl(x,\xi, \dfrac{w^{i^*}_n(x+\eps_n \xi)- w^{i^*}_n(x)}{\eps_n}\Bigr)\, \dx x\, \dx \xi 
 \leq \frac{C}{N-4}+ C(N^2+N)\|u_n - v_n\|_{L^1(U'\setminus\overline{U}_\delta;\Rm)};
\end{align*}
then, letting $n\to+\infty$ and using \eqref{inner recovery}, we get
\begin{equation*}
    F''^{,T}(u,U)\leq F''(u,U')+F''(u, U\setminus\overline{U}_{\delta}) + \frac{C}{N-4}.
\end{equation*}
Having supposed $U\setminus \overline{U}_\delta\in \A_{\rm reg}(\Omega)$, we use \eqref{referenceboundhigher} to obtain
\begin{equation*}
    F''^{,T}(u,U)\leq F''(u,U')+ c_2H(u, U\setminus\overline{U}_{\delta})+\Lambda| U\setminus\overline{U}_{\delta}| + \frac{C}{N-4};
\end{equation*}
 hence, by \eqref{AmenoAdelta}, we get
\begin{equation}\notag
     F''^{,T}(u,U)\leq F''(u,U')+\delta + \frac{C}{N-4}.
\end{equation} 
Since $U\in\A_{\rm reg}(\Omega)$, we apply Proposition \ref{proposition:  truncated functionals} so that the proof follows letting $T\to+\infty$ and by the arbitrariness of $\delta$ and $N$.

If $u\in {\rm GSBV}^2(U;\Rm)\cap L_{\rm loc}^1(\Omega;\Rm)$, thanks to Lemma \ref{lemma:vertical truncations} and to \eqref{F''innerregular} applied with $u= u^M$, we obtain that 
\begin{align*}
    F''(u,U)&=\sup_M F''( u^M,U)\\&
    =\sup_{M}\,\sup\{F''(u^M,U') : U'\subset \subset U \text{ with }U'\in \mathcal{A}(\Omega)\}\\
     &=\sup\,\{\sup_{M}F''(u^M,U'):U'\subset \subset U \text{ with }U'\in \mathcal{A}(\Omega)\}\\
     &=\sup\{F''(u,U'):U'\subset \subset U \text{ with }U'\in \mathcal{A}(\Omega)\},
\end{align*}
which proves \eqref{F''innerregular} in the general case. 
\end{proof}

\begin{Remark} \label{rmk : inner}
    Resorting to Lemma \ref{lemma inner regulairty}, one checks  immediately that in Lemma \ref{fundamental estimate truncated} holds unaltered if we drop the hypothesis that $U'\cup V'\in\A_{\rm reg}(\Omega)$.
\end{Remark}

We conclude the section proving the compactness of the sequence $\{F_{\e_n}\}_n$ with respect to  $\Gamma$-convergence, and also provide some relevant properties of the limit functional, which are going to be useful to prove an integral representation.

\begin{Proposition}\label{prop:compactness}
    Let $\{F_\e\}_\e$ be as in \eqref{def functionals}, and let $\{\e_n\}_n$ be a positive sequence converging to $0$ as $n\to+\infty$. There exists a subsequence $\{\e_{n_k}\}_k$  and a functional $F\colon L^1_{\rm loc}(\Omega;\Rm)\times \A(\Omega)\to [0,+\infty]$ satisfying the following properties:
    \begin{itemize}
 \item[$(a)$]{\rm ($\Gamma$-convergence)} for every $(u,U)\in L^1_{\rm loc}(\Omega;\R^m)\times\mathcal{A}_{\rm reg}(\Omega)$ it holds
 \begin{equation*}
     F(u,U)= \underset{k\to+\infty}{\Gamma\text{-}\lim}\,F_{\e_{n_k}}(u,U);
 \end{equation*}
        
        \item[$(b)$]{\rm (lower semicontinuity)} for every $U\in\mathcal{A}(\Omega)$ the functional $F(\cdot,U)$ is $L^1_{\rm loc}(\Omega;\R^m)$-lower semicontinuous;
        
       \item[$(c)$] {\rm (locality)} for every $U\in\mathcal{A}(\Omega)$ and  $u,v\in L^1_{\rm loc}(\Omega;\R^m)$ with $u=v$
 $\Ld$-a.e.\ on $U$ we have
 $F(u,U)=F(v,U)$;
        \item[$(d)$]{\rm (growth conditions)} there exist positive constants $c_1,c_2, \Lambda$ such that for every $U\in\A(\Omega)$ we have
        \begin{gather}\label{lower bound proposition}
        c_1{\rm MS}_1(u,U)\leq F(u,U) \text{ for all $u\in L^1_{\rm loc}(\Omega;\R^m)$},\\   \label{upper bound proposition} 
          F(u,U)\leq c_2 H(u,U)+\Lambda|U| \text{ for all }u\in L^1_{\rm loc}(\Omega;\Rm)\cap{\rm BV}(U;\Rm);
    \end{gather}
       
\item[$(e)$]{\rm (measure property)} for every $u\in L^1_{\rm loc}(\Omega;\Rm)$ the set function $F(u,\cdot)$ is the restriction to $\A(\Omega)$ of a Borel measure on $\Omega$;

\item[$(f)$]{\rm (monotonicity)} for every  $1$-Lipschitz function $\Phi\colon \Rm\to\Rm$ and $(u,U)\in  L^1_{\rm loc}(\Omega;\Rm) \times \A(\Omega)$, we have 
\begin{equation}\notag 
    F(\Phi\circ u,U)\leq F(u,U).
\end{equation}
In particular, 
\begin{align*}
 F(u+a,U)& = F(u,U)\quad \text{ for every } a\in\Rm,\\
    F(Ru,U) & =F(u,U)\quad \text{ for every }R\in SO(m),\\
    F(u^M,U) & \leq F(u,U)\quad \text{ for every } M>0.
\end{align*}
    \end{itemize}
\end{Proposition}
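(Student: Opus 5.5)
The plan is the standard localisation method, as in \cite[Chapter~18]{DalBook}. First I would use the compactness of $\Gamma$-convergence on a countable family. Let $\mathcal{R}$ be the countable collection of finite unions of open polyrectangles with rational vertices contained in $\Omega$ (intersected with $\Omega$ where needed). By the separability of $L^1_{\rm loc}(\Omega;\Rm)$ and a diagonal argument, there is a subsequence $\{\e_{n_k}\}_k$ such that $\Gamma$-$\lim_k F_{\e_{n_k}}(\cdot,R)$ exists for every $R\in\mathcal{R}$. From now on I work with this subsequence, and $F'$, $F''$ denote the corresponding $\Gamma$-$\liminf$ and $\Gamma$-$\limsup$. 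I would then define $F(u,U):=\sup\{F'(u,R): R\in\mathcal{R},\ R\subset\subset U\}$, which coincides with the inner-regular envelope of $F''$ on the same family.

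Next I would prove that $F$ is a measure and that it agrees with the $\Gamma$-limit on $\A_{\rm reg}(\Omega)$. By Lemma \ref{lemma inner regulairty} and the monotonicity of $F',F''$, one gets for $U\in\A_{\rm reg}(\Omega)$ that
\[
F''(u,U)=\sup_{U'\subset\subset U}F''(u,U')\le\sup_{R\subset\subset U}F''(u,R)=\sup_{R\subset\subset U}F'(u,R)\le F'(u,U),
\]
since for each $U'\subset\subset U$ there is $R\in\mathcal{R}$ with $U'\subset\subset R\subset\subset U$. Hence $F'=F''=F$ on $\A_{\rm reg}(\Omega)$, which is $(a)$. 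For $(e)$ I would apply the De Giorgi--Letta criterion to $F(u,\cdot)$. Inner regularity holds by construction. Subadditivity follows from Lemma \ref{fundamental estimate truncated} together with Remark \ref{rmk : inner}, applied to rectangles $R\subset\subset U\cup V$: write $R\subset\subset R_1\cup R_2$ with $R_1\subset\subset U$, $R_2\subset\subset V$, and nest them. Superadditivity on disjoint sets follows directly from the definition of the $\Gamma$-$\liminf$, because $F_\e(u,U\cup V)\ge F_\e(u,U)+F_\e(u,V)$ when $U\cap V=\emptyset$.

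The remaining properties are comparatively routine. For $(b)$, lower semicontinuity holds because $F(\cdot,U)$ is a supremum of $\Gamma$-liminfs. For $(c)$, locality holds because $F_\e(u,R)$ depends only on $u|_R$, so $F'(u,R)=F'(v,R)$ whenever $u=v$ a.e.\ on $R$. For $(d)$, the growth conditions come from Proposition \ref{prop: referencebound} applied on $R\in\mathcal{R}$ and then passing to the supremum, using the inner regularity of ${\rm MS}_1$ and $H$ as measures. For $(f)$, monotonicity comes from Lemma \ref{lemma:vertical truncations}, again passing to the supremum over $R$.

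I expect the main obstacle to be the subadditivity step. The fundamental estimate is only available for $F''$ and needs $L^\infty$ truncations together with Proposition \ref{proposition: truncated functionals}, which requires Lipschitz domains. I would address this by always reducing to the polyrectangles in $\mathcal{R}$ (which are Lipschitz) and relying on Remark \ref{rmk : inner} to remove the regularity assumption on $U'\cup V'$.
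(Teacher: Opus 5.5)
Your plan follows the paper's proof, with \cite[Theorem~16.9]{DalBook} written out. The paper takes the subsequence with $F'_-=F''_-$ from that theorem, whose proof is exactly your diagonal argument over a countable dense family of sets. It then gets $(a)$ from Lemma~\ref{lemma inner regulairty}, subadditivity from Lemma~\ref{fundamental estimate truncated} and Remark~\ref{rmk : inner}, superadditivity from disjointness, and $(b)$, $(c)$, $(f)$ essentially as you do.

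There is one false claim, and it breaks your argument for $(d)$: finite unions of rational polyrectangles are not Lipschitz in general. For instance, $((0,1)\cup(1,2))\times(0,1)^{d-1}$ is a box with an interior slit. Proposition~\ref{prop: referencebound} is only available on $\A_{\rm reg}(\Omega)$, and on slit sets its upper bound is actually false. The reason is that $F_\e(\cdot,R)$ integrates over $R\times R$, so it sees interactions across the slits, while $H(u,R)$ does not. Concretely, let $R\subset\Omega$ be a unit box cut into $N$ parallel strips by $N-1$ slits, and let $u$ alternate between $0$ and $\zeta\neq 0$ on the strips. Then $H(u,R)=0$ and $|R|=1$. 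But $G_{1,\e}(\cdot,R)$ coincides with $G_{1,\e}$ on the filled box, so \eqref{referenceboundlower} on the box gives $F'(u,R)\geq c_1\mu_1(N-1)$, which exceeds $\Lambda$ for large $N$.

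So you cannot obtain \eqref{upper bound proposition} by applying Proposition~\ref{prop: referencebound} on $R\in\mathcal{R}$. The repair is immediate, and it is what the paper does when it proves $(d)$ on $\A_{\rm reg}(\Omega)$ and extends it through $(e)$. For $R\subset\subset U$, pick $W\in\A_{\rm reg}(\Omega)$ with $R\subset\subset W\subset\subset U$; for example, take $W=\{\phi>t\}$ for a regular value $t$ of a cut-off $\phi\in C^\infty_c(U)$ equal to $1$ on $\overline{R}$. By $(a)$ and monotonicity you then get $F'(u,R)\leq F'(u,W)=F(u,W)\leq c_2H(u,W)+\Lambda|W|\leq c_2H(u,U)+\Lambda|U|$. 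The lower bound follows in the same way, using the inner regularity of ${\rm MS}_1(u,\cdot)$.

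Lipschitz regularity of the sets in $\mathcal{R}$ is not needed anywhere else. Lemma~\ref{fundamental estimate truncated} imposes no regularity on the outer sets, and Remark~\ref{rmk : inner} removes it for $U'\cup V'$, so your subadditivity step is fine as it stands.
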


\begin{proof}
Consider the functionals $ F', F'':  L^1_{\rm loc}(\Omega;\Rm)\times \A(\Omega)\to [0,+\infty]$ given by
\begin{equation}\label{def gamma compactness}
   F'(u,U):= \Gamma\text{-}\liminf_{n\to+\infty} F_{\e_n}(u,U), \qquad  F''(u,U):= \Gamma\text{-}\limsup_{n\to+\infty} F_{\e_n}(u,U) 
\end{equation}
and, for fixed $u\in L^1_{\rm loc}(\Omega;\Rm)$, define the inner regular envelopes 
\begin{gather*}
    F'_{-}(u,U):= \sup\{F'(u,U') : U'\subset \subset U \text{ with }U'\in \mathcal{A}(\Omega)\}, \\
     F''_{-}(u,U):= \sup\{F''(u,U') : U'\subset \subset U \text{ with }U'\in \mathcal{A}(\Omega)\}
\end{gather*}
for every $U\in \A(\Omega)$. By \cite[Theorem~16.9]{DalBook}, there exists a subsequence $\e_{n_k}\to 0^+$ as $k\to +\infty$ such that
\begin{equation}\label{def gammalimite}
    F'_{-}(u,U) = F''_{-}(u,U)=: F(u,U)
\end{equation}
for every $(u,U)\in L^1_{\rm loc}(\Omega;\Rm)\times \A(\Omega)$, where $F'_{-}, F''_{-}$ in \eqref{def gammalimite} are obtained replacing $\{\e_{n}\}_n$ in \eqref{def gamma compactness} with the subsequence $\{\e_{n_k}\}_k$.

Using Lemma \ref{lemma inner regulairty}, we obtain that 
\begin{equation*}
    F'(u,U)= F'_{-}(u,U)= F''_{-}(u,U)= F''(u,U) 
\end{equation*}
for every $(u,U)\in L^1_{\rm loc}(\Omega;\Rm)\times \A_{\rm reg}(\Omega)$, which proves property $(a)$. 

Property $(b)$ follows by the lower semicontinuity of the $\Gamma$-liminf and \eqref{def gammalimite}. Locality property $(c)$ follows by the locality of $\{F_{\e_n}\}_n$ and \cite[Proposition~16.15]{DalBook}. 

In order to prove the measure property $(e)$ we use De Giorgi-Letta criterion for measures (see, e.g., \cite[Theorem~10.2]{bdf} or \cite[Theorem~14.21]{DalBook}). The proof then amounts to show that, for every $u\in L^1_{\rm loc}(\Omega; \Rm)$, the set function $ F(u,\cdot)$
is increasing, inner regular on $\A(\Omega)$, superadditive, and subadditive. 
 
Let us fix $u\in L^1_{\rm loc}(\Omega;\Rm)$.
By Lemma \ref{lemma inner regulairty}, \eqref{def gammalimite}, and \cite[Remark~14.5]{DalBook} the set function $F(u,\cdot)$ is increasing, while it is inner regular  on $\A(\Omega)$ by definition. 

We now prove that ${F}(u, \cdot)$ is subadditive. Let $U,V\in\mathcal{A}(\Omega)$. By inner regularity of $F$,  it suffices to show that $F(u,W)\leq {F}(u,U)+{F}(u,V)$ for any $W\in \A_{\rm reg}(\Omega)$ such that $W\subset\subset U\cup V$. We choose $U',U'',V',V''\in \A_{\rm reg}(\Omega)$  with $U'\subset \subset U''\subset \subset  U$ and $V'\subset \subset V''\subset \subset  V$ and such that $W\subset U'\cup V'$. Using Lemma \ref{fundamental estimate truncated} and Remark \ref{rmk : inner}, we obtain that  
\begin{equation}\notag
  F(u,W)\leq F(u,U'\cup V')\leq   F''(u,U'\cup V')\leq F''(u,U'')+F''(u,V'')\leq {F}(u,U)+{F}(u,V).
\end{equation}

We are left with proving that ${F}(u, \cdot)$ is superadditive. Let $U,V\in\mathcal{A}(\Omega)$ with $U\cap V=\emptyset$. One can easily check that $U_{\e_{n_k}}(\xi)\cup V_{\e_{n_k}}(\xi)\subseteq (U\cup V)_{\e_{n_k}}(\xi)$ for every $k\in\N$ and $\xi\in \Rd$. Since the sets $U_{\e_{n_k}}(\xi)$ and $V_{\e_{n_k}}(\xi)$ are disjoint, for every $\{u_{k}\}_k$ converging to $u$ in $L^1_{\rm loc}(\Omega;\Rm)$ we obtain 
\begin{equation*}
     F_{\e_{n_k}}(u_{k},U)+F_{\e_{n_k}}(u_{k},V)\leq F_{\e_{n_k}}(u_{k},U\cup V),
\end{equation*}
which, letting $k\to+\infty$, implies the superadditivity, thus, property ($e$) is verified.

For $U\in\A_{\rm reg}(\Omega)$, property $(d)$ holds  by Proposition \ref{prop: referencebound} and property $(f)$ is a consequence of Lemma \ref{lemma:vertical truncations}. Both are extended to every element of $\A(\Omega)$ resorting to $(e)$.
\end{proof}

\section{Integral Representation}\label{sec: integral representation}
In this section we complete the proof of Theorem \ref{thm:compactness} by proving its integral representation part. We recall the notation set in \eqref{def minimisation aux}.  Given a functional $\mathcal{F}\colon {\rm SBV^2}(\Omega;\Rm)\times \mathcal{A}(\Omega)\to [0,+\infty]$, a function $w\in {\rm SBV}^2(\Omega;\Rm)$, and $U\in\mathcal{A}_{\rm reg}(\Omega)$, we set 
\begin{equation*}
    \m^\mathcal{F}(w,U):=\inf\{\mathcal{F}(u,U):\,u\in {\rm SBV}^2(U;\Rm) \text{ with }u=w \text{ in a neighbourhood of }\partial U\}.
\end{equation*}
Given $L\in \Rmd$, with a slight abuse of notation, we also let $L$ denote the corresponding linear map $L:\Rd\to \R^m$. Given $x\in\Rd$, $\zeta\in\Rm$, and $\nu\in\Sd$, we introduce the function  $u_{x,\zeta,\nu}\colon\Rd\to\Rm$ defined for every $y\in\Rd$ by 
\begin{equation}\notag
    u_{x,\zeta,\nu}(y):=\begin{cases}
     \zeta &\text{ if } (y-x) \cdot\nu\geq 0,\\
     0 &\text{ otherwise}.
    \end{cases}
\end{equation}
We let the symbol $Q^\nu(x,r)$ denote an open cube of side length $r$ and centre $x$ with two faces orthogonal to $\nu$, and we assume that $Q^\nu(x,r)=Q^{-\nu}(x,r)$. We also set $Q(x,r):=x+(-\frac{r}{2},\frac{r}{2})^d$.

The main result of this section is as follows. 

\begin{Proposition}\label{prop: representation} Let  $F\colon L^1_{\rm loc}(\Omega;\Rm)\times \A(\Omega)\to [0,+\infty]$ be a functional satisfying properties $($b$\,)$-$($f$\,)$ of Proposition \ref{prop:compactness}. 
Then for every $U\in\A(\Omega)$ and $u\in {\rm GSBV}^2(U;\Rm)\cap L^1_{\rm loc}(\Omega;\Rm)$ we have   \begin{equation}\notag
     F(u,U)=  \int_U f_{{\rm bulk}}(x,\nabla u)\,{\rm d}x+\int_{J_u\cap U} f_{{\rm surf}}(x, [u],\nu_u)\, {\rm d}\Hd,
    \end{equation}
    where 
    \begin{equation}
          \label{Gammabulk'}  f_{{\rm  bulk}}(x,L):= \limsup_{r\to 0^+}\frac{\m^F( L,Q(x,r))}{r^d}
          \end{equation}
          for all $x\in\Omega,$  $L\in\R^{m\times d}$, and
    \begin{equation}\label{Gammasurf'}
            f_{{\rm surf}}(x,\zeta,\nu):=\limsup_{r\to 0^+}\frac{\m^F(u_{x,\zeta,\nu},Q^\nu(x,r))}{r^{d-1}}
        \end{equation}
        for all $x\in\Omega$, $\zeta\in\R^m$, and $\nu\in\Sd$.
        
        \noindent Moreover, we have 
        \begin{align*}
          c_1\lambda_1|L|^2 \leq  f_{\rm bulk}(x,L)\leq c_2(\lambda_2|L|^2+\kappa|L|)+\Lambda\quad &  \text{for all $x\in\Omega$ and $L\in\Rmd$},\\
          c_1\mu_1 \leq  f_{\rm surf}(x,\zeta,\nu)\leq c_2(\mu_2+\kappa|\zeta|)\quad & \text{for all $x\in\Omega$, $\zeta\in\Rm$, and $\nu\in\Sd$},
        \end{align*}
        and \begin{align*}
  f_{\rm bulk}(x,L_1)\leq  f_{\rm bulk}(x,L_2) \quad & \text{  if  $|L_1|\leq |L_2| $},\\
 f_{\rm surf}(x,\zeta_1,\nu)\leq  f_{\rm surf}(x,\zeta_2,\nu)\quad & \text{ if $|\zeta_1|\leq |\zeta_2|$.}
\end{align*}
for all $x\in\Omega$ and $\nu\in\Sd$. In particular, for all $x\in\Omega$ and $\nu\in\Sd$ it holds 
\begin{align*}f_{\rm bulk}(x,L_1)=  f_{\rm bulk}(x,L_2) \quad & \text{ if $|L_1|=|L_2|$},\\
f_{\rm surf}(x,\zeta_1,\nu)  =f_{\rm surf}(x,\zeta_2,\nu)\quad & \text{ if $|\zeta_1|=|\zeta_2|$},\\
    f_{\rm surf}(x,\zeta,\nu)=f_{\rm surf}(x,\zeta,-\nu) \quad & \text{ for all $\zeta\in\Rm$}.
\end{align*}
\end{Proposition}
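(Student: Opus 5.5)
The strategy is the perturbation argument announced in the Introduction. We first represent $F$ on ${\rm SBV}^2$ through the result of Bouchitté, Fonseca, Leoni, and Mascarenhas \cite{BFLM2002}, applied to regularised functionals. We then pass to ${\rm GSBV}^2$ by truncation.

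\emph{Step 1: perturbation and representation.} For $\delta>0$ and $u\in{\rm SBV}^2(U;\Rm)$ set
\[
F^\delta(u,U):=F(u,U)+\delta\int_{J_u\cap U}|[u]|\,\dx\Hd .
\]
By properties (b)--(e) of Proposition \ref{prop:compactness}, $F^\delta$ has the required structure. It is $L^1$-lower semicontinuous on ${\rm SBV}^2$, because the added term is lower semicontinuous under the $\Hd$-bounds provided by Ambrosio's theorem. It is local and is a Borel measure in $U$. The lower bound \eqref{lower bound proposition}, together with the added term, gives
\[
\min\{c_1\lambda_1,c_1\mu_1,\delta\}\Bigl(\int_U|\nabla u|^2\,\dx x+\int_{J_u\cap U}(1+|[u]|)\,\dx\Hd\Bigr)\le F^\delta(u,U).
\]
The upper bound \eqref{upper bound proposition} gives $F^\delta(u,U)\le C\bigl(\int_U(1+|\nabla u|^2)\,\dx x+\int_{J_u\cap U}(1+|[u]|)\,\dx\Hd\bigr)$, since $|Du|(U)\le\int_U|\nabla u|\,\dx x+\int_{J_u\cap U}|[u]|\,\dx\Hd$ and $|\nabla u|\le 1+|\nabla u|^2$.

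The theorem of \cite{BFLM2002} therefore yields densities $f^\delta_{\rm bulk}$ and $f^\delta_{\rm surf}$, given by the cell formulas \eqref{Gammabulk'} and \eqref{Gammasurf'} with $\m^{F^\delta}$ in place of $\m^F$. Here one should check that the version of \cite{BFLM2002} with boundary data imposed ``in a neighbourhood of $\partial U$'' matches \eqref{def minimisation aux}.

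\emph{Step 2: the limit $\delta\to0^+$.} Since $F^\delta$ decreases to $F$ pointwise on ${\rm SBV}^2$, it suffices to show $f^\delta_{\rm bulk}\to f_{\rm bulk}$ and $f^\delta_{\rm surf}\to f_{\rm surf}$, and then apply monotone convergence. This is the main obstacle: interchanging $\limsup_{r\to0}$ with $\delta\to0$ in the cell formulas.

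For the bulk density I would fix $r$. Take a competitor $v$ for $\m^F(L,Q(x,r))$. By property (f) of Proposition \ref{prop:compactness}, truncating at level $M\ge\sqrt m\,|L|r$ does not increase $F$ and leaves the boundary datum unchanged. Hence $\m^{F^\delta}(L,Q(x,r))\le \m^F(L,Q(x,r))+2\delta M\,\Hd(J_v)$. The lower bound controls $\Hd(J_v)\le C\m^F/\mu_1$. This gives
\[
\m^{F^\delta}\le (1+C\delta r)\,\m^F+C\delta r^d ,
\]
uniformly in $r$, which is enough after dividing by $r^d$. For the surface density the same truncation at level $|\zeta|$ works, with $\Hd(J_v)\le Cr^{d-1}$, giving $\m^{F^\delta}\le \m^F+C\delta|\zeta|r^{d-1}$. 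The reverse inequalities are trivial. This yields the representation on ${\rm SBV}^2(U;\Rm)$.

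\emph{Step 3: extension and properties of the densities.} For $u\in{\rm GSBV}^2(U;\Rm)$, apply the representation to $u^M\in{\rm SBV}^2$ on sets $U'\subset\subset U$ (which suffices by (e)). Property (f) together with lower semicontinuity (b) gives $F(u,U)=\lim_M F(u^M,U)$. On the right-hand side, Remark \ref{remark truncations} gives $\nabla u^M\to\nabla u$ a.e. with $|\nabla u^M|\uparrow|\nabla u|$, and on $J_u$ the truncated jumps converge to $[u]$. The monotonicity of the densities, proved below, provides monotone convergence; the $\{J_{u^M}\}$ increase to $J_u$ up to $\Hd$-null sets.

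The growth bounds follow by testing the cell problems with $L$ and $u_{x,\zeta,\nu}$ and using (d), where $|D u_{x,\zeta,\nu}|(Q^\nu(x,r))=|\zeta|r^{d-1}$; the lower bounds come from (d) on general competitors. Monotonicity in $|L|$ and $|\zeta|$ follows from (f). One composes competitors with a $1$-Lipschitz map $\Phi$ such that $\Phi\circ L_2=L_1$ near the boundary. For equal norms, $L_1=RL_2$ for a rotation in $SO(m)$ acting on the target; for $|L_1|<|L_2|$ one uses a radial contraction combined with a rotation. In the general matrix case this may only make sense after identifying the density, a detail I would check. The identity $f_{\rm surf}(x,\zeta,-\nu)=f_{\rm surf}(x,\zeta,\nu)$ follows from $u_{x,\zeta,-\nu}=\zeta-u_{x,\zeta,\nu}$ up to a null set, invariance of $F$ under $v\mapsto\zeta-v$ (a $1$-Lipschitz isometry), and the equal-norm property, together with $Q^\nu=Q^{-\nu}$.
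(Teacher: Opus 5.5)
\paragraph{Overall route.} Your Steps 1--3 follow the paper's proof. You perturb by $\delta\int_{J_u\cap U}|[u]|\,d\Hd$ to meet the coercivity hypothesis of \cite{BFLM2002}. You remove $\delta$ by truncating almost-minimisers of the cell problems and bounding $\Hd(J_v)$ through the lower bound in $(d)$. You then pass to ${\rm GSBV}^2$ via $u^M$ and monotone convergence.

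Two small corrections:
\begin{itemize}
\item Before truncating at a level $M\ge\frac{\sqrt d}{2}|L|r$, you must replace the datum $L$ by $L(\cdot-x)$, using invariance under addition of constants (the paper's Remark~\ref{remark:new fbulk}). Otherwise the truncation destroys the boundary condition when $x$ is far from the origin.
\item The lower bound $c_1\mu_1\le f_{\rm surf}$ can only hold for $\zeta\neq0$. Testing with $v=0$ gives $f_{\rm surf}(x,0,\nu)=0$. This is harmless on $J_u$.
\end{itemize}

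\paragraph{The gap: monotonicity of $f_{\rm bulk}$ in $|L|$.} Your hesitation here is justified. Equal norms do not give $L_1=RL_2$ with $R\in SO(m)$: for $m=1$ the group is trivial, while $e_1^{\top}$ and $e_2^{\top}$ have the same norm. A $1$-Lipschitz $\Phi$ with $\Phi\circ L_2=L_1$ on an open set exists if and only if $|L_1z|\le|L_2z|$ for every $z\in\Rd$. This is much stronger than $|L_1|\le|L_2|$.

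The paper's proof makes the same error: it asserts the existence of such a $\Phi$ from $|L_1|\le|L_2|$ alone. In fact the isotropy claim is false in general. Take $f_\e(x,\xi,z)=\chi_{B_1}(\xi)(1+\chi_A(\xi))g_\e(z)$, with $A$ a thin double cone with axis $\R e_1$. This is admissible with $\rho_1=\chi_{B_1}$, $\rho_2=2\chi_{B_1}$, $\eta_\e=0$. The recovery sequence $u_\e=L$ gives the upper bound. Fatou in $\xi$ combined with the directional liminf estimate of \cite{Gobbino} gives the lower bound. Together they yield $f_{\rm bulk}(x,L)=\int_{B_1}(1+\chi_A(\xi))|L\xi|^2\,d\xi$, which takes different values at $e_1^{\top}$ and $e_2^{\top}$.

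\paragraph{What survives.} Property $(f)$ does give $f_{\rm bulk}(x,L_1)\le f_{\rm bulk}(x,L_2)$ whenever $|L_1z|\le|L_2z|$ for all $z$. In particular $f_{\rm bulk}(x,RL)=f_{\rm bulk}(x,L)$ for $R\in O(m)$. This weaker property is all your Step 3 needs:
\begin{itemize}
\item A.e., $\nabla u^M$ is $\nabla u$ with the rows $i$ such that $|u_i|\ge M$ set to zero. So for $M\le M'$ one has $\nabla u^M=P\nabla u^{M'}$ with $P$ a coordinate projection, and monotone convergence applies.
\item Alternatively, the bulk term converges by dominated convergence, using $|\nabla u^M|\le|\nabla u|$, the quadratic upper bound and $|U|<\infty$.
\end{itemize}
Monotonicity is genuinely needed only for the surface term, where $|[u]|$ may fail to be $\Hd$-integrable on $J_u$. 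There your argument by rotation and scaling of $\zeta$ is correct, as is the evenness in $\nu$ via $v\mapsto\zeta-v$.

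So the representation formula, the growth bounds (for $\zeta\neq0$) and the surface properties go through. The statements that $f_{\rm bulk}$ depends only on $|L|$ and is monotone in it must be weakened to the matrix-ordering version above.
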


\begin{Remark}\label{remark:new fbulk}
Thanks to $(f)$ of Proposition \ref{prop:compactness}, the functional $F$ considered in Proposition \ref{prop: representation} is invariant under vertical translations. In particular, for every $x\in\Omega$ and  $L\in\Rmd$ we have 
\begin{equation}\label{eq:new fbulk}
    f_{\rm bulk}(x,L):=\limsup_{r\to 0^+}\frac{\m^F( L(\cdot-x),Q(x,r))}{r^d}.
\end{equation}
\end{Remark}

The proof of this proposition is based on a result of Bouchitté, Fonseca, Leoni, and Mascarenhas \cite{BFLM2002}, who gave sufficient conditions for a free discontinuity functional to be representable as the sum of  bulk and surface energies. For the reader's convenience, we here give a complete, slightly modified, statement of this theorem.
\begin{Theorem}\label{Theorem:BouchFonsLeonMas}
    Let $\mathcal{F}\colon {\rm SBV}^2(\Omega;\Rm)\times \mathcal{A}(\Omega)\to [0,+\infty)$ be a functional that satisfies the following conditions:
    \begin{itemize}
  \item [(i)] for every $U\in\A(\Omega)$ the functional $\mathcal{F}(\cdot,U)$ is $L^1_{\rm loc}(\Omega;\Rm)$-lower semicontinuous;
        \item [(ii)] for every $U\in\A(\Omega)$ and $u,v\in {\rm SBV}^2(\Omega;\Rm)$ with  $u=v$ $\Ld$-a.e. on $U\in \A(\Omega)$ we have $\mathcal{F}(u,U)=\mathcal{F}(v,U)$;

        \item [(iii)] there exists a positive constant $C$ such that for every $U\in\A(\Omega)$ and $u\in {\rm SBV}^2(\Omega;\Rm)$ we have 
        \begin{multline*}
           \frac{1}{C}\Bigl(\int_U |\nabla u|^2\, \dx x+\int_{J_u\cap U}(1+|[u]|)\,{\rm d}\Hd\Bigr)\leq \mathcal{F}(u,U) \\
           \leq C\Bigl(\int_U (1+|\nabla u|^2)\, \dx x+\int_{J_u\cap U}\!\!\!\!(1+|[u]|)\,{\rm d}\Hd\Bigr);
        \end{multline*}
        \item [(iv)] for every $u\in {\rm SBV}^2(\Omega;\Rm)$ the set function $\mathcal{F}(u,\cdot)$ is the restriction to $\mathcal{A}(\Omega)$ of a Borel measure on $\Omega$;
         \item [(v)] for every $u\in {\rm SBV}^2(\Omega;\Rm)$, $U\in\A(\Omega)$, and $a\in\Rm$ we have $\mathcal{F}(u+a,U)=\mathcal{F}(u,U)$.
    \end{itemize}
       Then for every $u\in {\rm SBV}^2(\Omega;\Rm)$ and $U\in\mathcal{A}(\Omega)$ it holds
        \begin{equation*}
            \mathcal{F}(u,U)=\int_U f_{{\rm bulk}}(x,\nabla u)\,{\rm d}x+\int_{J_u\cap U} f_{{\rm surf}}(x,[u],\nu_u)\, {\rm d}\Hd,
        \end{equation*}
        where ${f}_{{\rm bulk}}$ and ${f}_{{\rm surf}}$ are given by \eqref{Gammabulk'} and \eqref{Gammasurf'} with $F$ replaced by $\mathcal{F}$.
\end{Theorem}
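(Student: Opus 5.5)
The plan is to deduce the statement directly from the integral representation theorem of Bouchitté, Fonseca, Leoni, and Mascarenhas \cite{BFLM2002}, applied with exponent $p=2$, rather than to re-prove it. The work consists of two things. First, checking that hypotheses (i)--(v) imply the assumptions of \cite{BFLM2002}. Second, checking that the energy densities produced there coincide with \eqref{Gammabulk'} and \eqref{Gammasurf'} at the points that matter, namely $\Ld$-a.e.\ in $U$ for the bulk term and $\Hd$-a.e.\ on $J_u\cap U$ for the surface term.

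\textbf{Hypotheses.} Condition (i) gives $L^1$-lower semicontinuity, since $L^1$-convergence implies $L^1_{\rm loc}$-convergence. Conditions (ii) and (iv) are exactly the locality and measure properties required in \cite{BFLM2002}. Condition (iii) is the two-sided growth condition of \cite{BFLM2002} for $p=2$. If that paper works on ${\rm SBV}^p$-type spaces, I will set $\mathcal{F}(u,U)=+\infty$ for $u\notin{\rm SBV}^2$; the lower bound in (iii) makes this extension compatible with lower semicontinuity.

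\textbf{Densities.} \cite{BFLM2002} represents the densities through cell problems of the form
\[
\m^{\mathcal F}\bigl(u_0+L(\cdot-x_0),Q(x_0,r)\bigr)\quad\text{and}\quad \m^{\mathcal F}\bigl(u_{x_0,a,b,\nu},Q^\nu(x_0,r)\bigr),
\]
with competitors in ${\rm SBV}^2(\Omega;\Rm)$ that agree with the boundary datum near $\partial Q$. I will reconcile three differences with the present definitions.
\begin{itemize}
\item[(a)] Competitors defined only on $U$ versus on $\Omega$. A competitor $u\in{\rm SBV}^2(U;\Rm)$ equal to $w$ near $\partial U$ extends by $w$ to a function in ${\rm SBV}^2(\Omega;\Rm)$. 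By locality (ii) the extension does not change $\mathcal F(\cdot,U)$, so the two infima agree.
\item[(b)] Affine offsets and two-sided jump data. By the translation invariance (v), the datum $u_0(x_0)+L(\cdot-x_0)$ may be replaced by $L$, as in Remark~\ref{remark:new fbulk}. Likewise the two-sided datum with traces $(a,b)$ may be replaced by $u_{x_0,a-b,\nu}$.
\item[(c)] Limit versus $\limsup$. \cite{BFLM2002} obtains the representation with densities given by limits (equivalently $\limsup$'s) as $r\to0^+$ at $\Ld$-a.e.\ Lebesgue point of $\nabla u$ and at $\Hd$-a.e.\ point of $J_u$. Both integrals only see the densities on these full-measure sets, so defining the densities everywhere by the $\limsup$ in \eqref{Gammabulk'} and \eqref{Gammasurf'} does not change either integral.
\end{itemize}

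I expect the main obstacle to be this bookkeeping. Specifically, I need to confirm that the blow-up argument of \cite{BFLM2002} (comparison of $\mathcal F(u,Q(x_0,r))$ with the cell problems via the Radon--Nikodým derivative of the measure $\mathcal F(u,\cdot)$) uses only properties (i)--(v). In particular it should not use $L^1$-convergence on all of $\Omega$ beyond what $L^1_{\rm loc}$-lower semicontinuity provides. It should also not use competitors outside ${\rm SBV}^2$, which would break the matching in step (a).
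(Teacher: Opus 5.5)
Your proposal is correct and follows the paper's route. The paper gives no independent proof: it quotes this result as a slightly modified form of the theorem in \cite{BFLM2002}, and the modifications are exactly those you reconcile (competitors on $U$ versus on $\Omega$ via locality (ii), dropping the dependence on $u_0$, and reducing the jump datum $(a,b)$ to $a-b$ via translation invariance (v), as in Remark~\ref{remark:new fbulk}). The lower semicontinuity point you flag is harmless: in the global method it is used only to compare $\mathcal{F}(u,U)$ with sums of cell minima over fine coverings of $U$ by disjoint balls, and it is applied there to functions obtained by gluing near-minimisers on these balls into $u$, which coincide with $u$ off the balls and converge to $u$ in $L^1(\Omega;\mathbb{R}^m)$ by Poincar\'e's inequality, so $L^1_{\rm loc}$ lower semicontinuity suffices.
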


With this result at our disposal, we are now ready to  prove Proposition \ref{prop: representation}.

\medskip

\begin{proof}[Proof of Proposition \ref{prop: representation}] It follows immediately from $(b), (c), (e)$, and $(f)$ that the restriction  of the functional ${F}$ to ${\rm SBV}^2(\Omega;\Rm)\times \A(\Omega)$  satisfies conditions $(i), (ii), (iv)$, and $(v)$ of Theorem \ref{Theorem:BouchFonsLeonMas}. 
 By \eqref{upper bound proposition}, ${F}$ satisfies the upper bound in $(iii)$. However, by hypothesis it only satisfies the weaker lower bound in \eqref{lower bound proposition}, and not the desired lower bound in condition ({\it iii}). To overcome this difficulty, we use a perturbative approach similar to the one employed in \cite[Section~5]{DalToa}   (see also \cite[Section~4]{CagnettiPoincare}). For every $\delta>0$ we introduce the functional ${F}^\delta$ defined by   
\begin{equation*}
    {F}^\delta(u,U):={F}(u,U)+\delta\int_{J_u\cap U}|[u]|\,\dx\Hd,
\end{equation*}
for every $u\in {\rm SBV}^2(\Omega;\Rm)$ and $U\in\A(\Omega)$, which satisfies conditions ({\it i})-({\it v}) of Theorem \ref{Theorem:BouchFonsLeonMas}.

\noindent Hence, we  deduce the integral representation
\begin{equation}\label{deltarepresentation}
    {F}^\delta(u,U) = \int_U f^\delta_{{\rm bulk}}(x,\nabla u)\,{\rm d}x+\int_{J_u\cap U} f^\delta_{{\rm surf}}(x,[u],\nu_u)\, {\rm d}\Hd,
\end{equation}
for all $(u,U)\in {\rm SBV}^2(\Omega;\Rm)\times \A(\Omega)$, where $f^\delta_{\rm bulk}$ and $f^\delta_{\rm surf}$ are defined as in \eqref{Gammabulk'} and \eqref{Gammasurf'} with ${F}$ replaced by ${F}^\delta$. 

At this point, we aim to pass to the limit as $\delta\to0^+$ in \eqref{deltarepresentation} in order to obtain an integral representation for ${F}$ on ${\rm SBV}^2(\Omega;\Rm)\times \A(\Omega)$. Clearly, ${F}^\delta(u,U)\to {F}(u,U)$ as $\delta\to 0^+$; therefore, the conclusion follows provided we are in position to apply the Dominated Convergence Theorem on the right-hand side of \eqref{deltarepresentation}.

We preliminarily observe that, using $L$ as a test function for the minimisation problems $\m^F(L,Q(x,r))$ and $\m^{{F}^\delta}( L,Q(x,r))$, from the  upper bound in \eqref{upper bound proposition}, we have   
    \begin{align}\notag
           f_{\rm bulk}(x,L)\leq c_2(\lambda_2|L|^2+\kappa|L|)+\Lambda \quad & \text{for all  } x\in \Omega \text{ and }L\in\R^{m\times d}, \\ \label{f delta}
           f^\delta_{\rm bulk}(x,L)\leq  c_2(\lambda_2|L|^2+\kappa|L|)+\Lambda \quad & \text{for all } x\in\Omega \text{ and }L\in\R^{m\times d}
    \end{align}
and, applying Theorem \ref{Theorem:BouchFonsLeonMas} with $\mathcal{F}={\rm  MS}_1$, we also obtain the lower bound 
\begin{equation*}
     c_1\lambda_1|L|^2\leq f_{\rm bulk}(x,L)\quad \text{for all  } x\in \Omega \text{ and }L\in\R^{m\times d}.
\end{equation*}
 Analogously, we have
    \begin{align}\notag
         c_1\mu_1\leq f_{\rm surf}(x,\zeta,\nu)\leq c_2(\mu_2+\kappa|\zeta|)\quad & \text{for all }  x\in\Omega,\, \zeta\in\Rm, \text{ and }\nu\in\Sd,\\   \label{g delta}
          f^\delta_{\rm surf}(x,\zeta,\nu)\leq c_2\mu_2+(c_2\kappa+\delta)|\zeta|\quad & \text{for all } x\in \Omega,\,\zeta\in\R^m, \text{ and }\nu\in\Sd.
    \end{align}

Now we prove that 
\begin{equation}\label{claimbulk}
\lim_{\delta \to 0^+}f^\delta_{\rm bulk}(x,L)= f_{\rm bulk}(x,L)\quad  \text{ for all } x\in \Omega \text{ and } L\in \Rmd,
\end{equation}
\begin{equation}\label{claimsurf}
\lim_{\delta\to 0^+}f^\delta_{\rm surf}(x,\zeta,\nu)= f_{\rm surf}(x,\zeta,\nu)\quad  \text{ for all } x\in \Omega,\, \zeta\in \Rm,\text{ and } \nu\in \Sd.
\end{equation}
To this aim, we first observe that, since $F(u,Q(x,r))\leq  F^\delta(u,Q(x,r))$ for every $\delta>0$ and $\{{F}^\delta\}_\delta$ is increasing in $\delta$, it follows immediately that
     \begin{gather*}
         f_{\rm bulk}(x,L)\leq \inf_{\delta>0}f^\delta_{\rm bulk}(x,L)=\lim_{\delta\to 0^+}f^\delta_{\rm bulk}(x,L) \quad \text{ for all $x\in\Rd$ and $L\in\R^{m\times d}$}, \\
         f_{\rm surf}(x,\zeta,\nu)\leq  \inf_{\delta>0}f^\delta_{\rm surf}(x,\zeta,\nu)= \lim_{\delta\to0^+}f^\delta_{\rm surf}(x,\zeta,\nu) \quad \text{ for all $x\in\Rd$, $\zeta\in\Rm$, and $\nu\in\Sd$}.
     \end{gather*}
To obtain the converse inequalities, we will prove that for every $\delta>0$ we have
     \begin{gather}
        \label{claim limit bulk} f^\delta_{\rm bulk}(x,L)\leq f_{\rm bulk}(x,L) \quad \text{ for all $x\in\Rd$ and   $L\in\Rmd$},
        \end{gather}
and that there exists a constant $C>0$, independent of $\delta$ and of $|\zeta|$, such that 
\begin{gather}\label{claim limit surface}f^\delta_{\rm surf}
         (x,\zeta,\nu)\leq  (1+C|\zeta|\delta)f_{\rm surf}(x,\zeta,\nu) \quad \text{ for all $x\in\Rd$, $\zeta\in\Rm$, and $\nu\in\Sd$}.
     \end{gather}
       
Let us fix $x\in\Rd$ and  $L\in\Rmd$. In view of Remark \ref{remark:new fbulk}, for  $r>0$ we consider a function $u_r\in {\rm SBV}^2(Q(x,r); \Rm)$, with $u_r= L(\cdot-x)$ in a neighbourhood of $\partial Q(x,r)$ such that 
     \begin{equation}\label{minimality of ueps}
         F(u_r,Q(x,r))\leq \m^{{F}}(L(\cdot-x),Q(x,r))+r^{d+1}.
     \end{equation}
   By a truncation argument we assume  $\|u_r\|_{L^\infty(Q(x,r); \Rm)}\leq  \sqrt{md}|L|r$. We compare ${F}^\delta(u_r,Q(x,r))$ with ${F}(u_r,Q(x,r))$. By definition of ${F}^\delta$, the lower bound \eqref{lower bound proposition}, and \eqref{minimality of ueps} we get
     \begin{align}\notag
         {F}^\delta(u_r,Q(x,r))&={F}(u_r,Q(x,r))+\delta\int_{J_{u_r}\cap Q(x,r)}|[u_r]|\,{\rm d}\Hd\\\notag &\leq {F}(u_r,Q(x,r))+2\delta{\sqrt{md}}|L|r\Hd(J_{u_r}\cap Q(x,r))\\\notag 
         &\leq \big(1+\frac{2\delta{\sqrt{md}}|L|r}{c_1\mu_1}\big){F}(u_r,Q(x,r))\\\notag 
         &\leq \big(1+\frac{2\delta{\sqrt{md}}|L|r)}{c_1\mu_1}\big)\big(\m^{{F}}( L(\cdot-x),Q(x,r))+r^{d+1}\big).
     \end{align} 
As $u_r$ is a competitor for the minimisation problem $\m^{F^\delta}(L(\cdot-x),Q(x,r))$, dividing by $r^d$ the previous inequality we get that 
     \begin{equation*}
         \frac{\m^{F^\delta}(L(\cdot-x),Q(x,r))}{r^d}\leq  \bigl(1+\frac{2\delta\sqrt{md}|L|r}{c_1\mu_1}\bigr)\bigl(\frac{\m^{{F}}(L(\cdot-x),Q(x,r))}{r^d}+r\bigr),
     \end{equation*}
    and then, letting $r\to0^+$ and using \eqref{eq:new fbulk}, we obtain \eqref{claim limit bulk}, which in turn implies \eqref{claimbulk}. 

  We now show \eqref{claim limit surface}. Let us fix $x\in\Rd$, $\zeta\in\R^m$,  and $\nu\in\Sd$. Arguing as in the proof of \eqref{claim limit bulk}, for every $r>0$ we find a function $u_r\in {\rm SBV}^2(Q^\nu(x,r); \Rm)$, with $u_r=u_{x,\zeta,\nu}$ in a neighbourhood of $\partial Q^\nu(x,r)$ and   $\|u_r\|_{L^\infty(Q^\nu(x,r);\Rm)}\leq {\sqrt{m}}|\zeta|$,  such that 
  \begin{equation*}
      {F}(u_r,Q^\nu(x,r))\leq \m^{{F}}(u_{x,\zeta,\nu},Q^\nu(x,r))+r^d.
  \end{equation*}
  Using this inequality and the lower bound \eqref{lower bound proposition}, we obtain that
  \begin{align*}
  {F}^\delta(u_r,Q^\nu(x,r))&={F}(u_r,Q^\nu(x,r))+\delta\int_{J_{u_r}\cap Q^\nu(x,r)}\!\!\!\!|[u_r]|\,{\rm d}\Hd\\
  \notag &\leq {F}(u_r,Q^\nu(x,r))+2{\sqrt{m}}\delta|\zeta|\Hd(J_{u_r}\cap Q^\nu(x,r))\\
  &\leq \big(1+\frac{2\sqrt{m}\delta|\zeta|}{c_1\mu_1}\big){F}(u_r,Q^\nu(x,r))\leq \big(1+\frac{2{\sqrt{m}}\delta|\zeta|}{c_1\mu_1}\big)\big (\m^{{F}}(u_{x,\zeta,\nu},Q^\nu(x,r))+r^d\big),
  \end{align*}
  then, dividing this last inequality by $r^{d-1}$ and using the fact that $u_r=u_{x,\zeta,\nu}$ in a neighbourhood of $\partial Q(x,r)$, we get 
  \begin{equation*}
      \frac{\m^{F^\delta}(u_{x,\zeta,\nu},Q^\nu(x,r))}{r^{d-1}}\leq \bigl(1+\frac{2\sqrt{m}\delta|\zeta|}{c_1\mu_1}\bigr)\bigl(\frac{\m^{{F}}(u_{x,\zeta,\nu},Q^\nu(x,r))}{r^{d-1}}+r\bigr).
  \end{equation*}
  Letting $r\to0^+$, we get \eqref{claim limit surface}, which leads to \eqref{claimsurf}.

  Having proved the pointwise convergence, we finally resort to the uniform bounds given by \eqref{f delta}  and \eqref{g delta} in order to apply the Dominated Convergence Theorem in \eqref{deltarepresentation} and to obtain 
    \begin{equation}\label{SBV repre}
    {F}(u,U) = \int_U f_{{\rm bulk}}(x,\nabla u)\,{\rm d}x+\int_{J_u\cap U} f_{{\rm surf}}(x,[u],\nu_u)\, {\rm d}\Hd
\end{equation}
for every $(u,U)\in {\rm SBV}^2(\Omega;\Rm)\times \A(\Omega)$. Note that, by $(c)$, this integral representation yields an identical integral representation on every $U\in\A_{\rm reg}(\Omega)$  and $u\in L^1_{\rm loc}(\Omega;\Rm)\cap {\rm SBV}^2(U;\Rm)$. Indeed, if $U\in\A_{\rm reg}(\Omega)$, any function  $u\in{\rm SBV}^2(U;\Rm)$ can be extended to a function $\widetilde{u}\in {\rm SBV}^2(\Omega;\Rm)$ and by locality we then have $F(u,U)=F(\widetilde{u},U)$.

In order to extend the integral representation to ${\rm GSBV}^2$,  we begin  showing that \begin{equation}\label{bulkmono}
    |L_1|\leq |L_2|\implies f_{\rm bulk}(x,L_1)\leq  f_{\rm bulk}(x,L_2) 
\end{equation}
for all $x\in\Rd$, and
\begin{equation}\label{surfmono}
    |\zeta_1|\leq |\zeta_2|\implies f_{\rm surf}(x,\zeta_1,\nu)\leq  f_{\rm surf}(x,\zeta_2,\nu) 
\end{equation}
for all $x\in\Rd$ and $\nu\in \Sd$. We prove only the first inequality, the proof of the latter being obtained with an analogous argument.
Let us fix $L_1,L_2\in\R^{m\times d}$ and $r>0$ and let $v_2\in {\rm SBV}^2(Q(x,r);\Rm)$ with $v_2=L_2$ in a neighbourhood of $\partial Q(x,r)$. As $|L_1|\leq |L_2|$, there exists a $1$-Lipschitz map $\Phi\colon \Rm\to \Rm$ so that $v_1:=\Phi\circ v_2$ equals $L_1$ in a neighbourhood of $\partial Q(x,r)$ and therefore, by $(f)$, we obtain
\begin{equation*}
    \m^F(L_1,Q(x,r))\leq F(v_1,Q(x,r))\leq F(v_2,Q(x,r)).
\end{equation*}
From of the arbitrariness of $v_2$ and of $r$ we deduce \eqref{bulkmono}.

We are now ready to conclude the proof of the integral representation. Let us fix 
$U\in \A(\Omega)$ and $u\in {\rm GSBV^2}(U;\Rm)\cap L^1_{\rm loc}(\Omega;\Rm)$. Since $u^M\in {\rm SBV^2}(U;\Rm)\cap L^1_{\rm loc}(\Omega;\Rm)$ for every $M>0$, by \eqref{SBV repre} we obtain that 
\begin{equation}\label{integral M}
    {F}(u^M,U) = \int_{U} f_{{\rm bulk}}(x,\nabla u^M)\,{\rm d}x+\int_{J_{u^M}\cap U} f_{{\rm surf}}(x,[u^M],\nu_{u^M})\, {\rm d}\Hd.
\end{equation}
Combining $(b)$ with $(f)$, we deduce that 
\begin{equation*}
    \lim_{M\to+\infty}F(u^M,U)=F(u,U).
\end{equation*}
 Then, recalling Remark \ref{remark truncations}, \eqref{bulkmono} and \eqref{surfmono}, and using the Monotone Convergence Theorem, we let $M\to+\infty$ in \eqref{integral M} to get 
\begin{equation*}
    {F}(u,U) = \int_{U'} f_{{\rm bulk}}(x,\nabla u)\,{\rm d}x+\int_{J_{u}\cap U} f_{{\rm surf}}(x,[u],\nu_{u})\, {\rm d}\Hd,
\end{equation*}
which is the thesis.
\end{proof}

\section{Convergence of minima and \texorpdfstring{$\Gamma$}{Γ}-convergence}\label{sec:convergence of infima}
In this section we fix a positive sequence $\{\e_n\}_{n}$ converging to $0$ as $n\to +\infty$ and
 a sequence $\{F_{\e_n}\}_{n}$ given by \eqref{def functionals}, with $\{f_{\e_n}\}_n$ satisfying  conditions (i)-(iii) of Section \ref{section : functionals}.  We show that thanks to general properties of $\Gamma$-convergence and to results obtained in previous sections, the asymptotic behaviour of minimisation problems associated to $F_{\e_n}$ on small cubes determines the energy densities in the integral representation of the $\Gamma$-limit $F$. 
 
\medskip

We first state a compactness result for sequences with equi-bounded energy, originally proved by Gobbino in \cite[Theorem~5.4]{Gobbino} (see also \cite[Theorem~5.23]{BraidesApproximation}).
\begin{Lemma}\label{lemma comp gobbino}
Let $\{u_n\}_n\subset L^\infty(\Omega;\Rm)$, let $r>0$, and assume that 
\begin{equation}\notag 
    \sup_{n} \big(\|u_n\|_{L^\infty(\Omega;\Rm)}+G^r_{1,\e_n}(u_n,\Omega)\big)<+\infty,
\end{equation}
where $G^r_{1,\e_n}$ are the functionals defined by \eqref{Gobbino troncati}.
Then there exists a subsequence $\{\e_{n_k}\}_k$   and a function $u\in {\rm SBV}^2(\Omega;\Rm)$ such that $\{u_{\e_{n_k}}\}_k$ converges to $u$ in $L^1(\Omega;\Rm)$ as $k\to+\infty$.
\end{Lemma}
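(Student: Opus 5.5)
The plan is to reduce the statement to the scalar compactness theorem of Gobbino \cite[Theorem~5.4]{Gobbino}, which already covers compactly supported kernels, applied componentwise. First I check the hypotheses for a single component. The functional $G^r_{1,\e_n}$ in \eqref{Gobbino troncati} uses the kernel $\rho_1\chi_{B_r}$. This kernel is bounded below by $c_0$ on $B_{\min\{r,r_0\}}$ by \eqref{non zero in zero}, and it has finite moments of every order. For each $i\in\{1,\dots,m\}$ we have $|u_n^i(y)-u_n^i(x)|\le|u_n(y)-u_n(x)|$, and $g_\e$ is radially increasing. Hence
\begin{equation*}
G^r_{1,\e_n}(u_n^i,\Omega)\le G^r_{1,\e_n}(u_n,\Omega),
\end{equation*}
so each scalar sequence $\{u_n^i\}_n$ is bounded in $L^\infty(\Omega)$ and has equibounded scalar truncated Gobbino energy. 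Replacing the kernel by the smaller one $c_0\chi_{B_{\min\{r,r_0\}}}$ only decreases the energy, so we may work with this model kernel, which is exactly of the form treated in \cite[Section~7]{Gobbino} and \cite[Theorem~5.23]{BraidesApproximation}.

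Second, I invoke the scalar compactness result on each component. It gives a subsequence converging in $L^1_{\rm loc}(\Omega)$ to a limit $u^i$. To upgrade to global $L^1(\Omega)$ convergence, I use the uniform $L^\infty$ bound together with the boundedness of $\Omega$: the mass near $\partial\Omega$ is uniformly small, so $L^1_{\rm loc}$ convergence plus dominated convergence yields $L^1(\Omega)$ convergence. Extracting successively for $i=1,\dots,m$ (a diagonal choice), I obtain one subsequence $\{\e_{n_k}\}_k$ along which $u_{n_k}\to u$ in $L^1(\Omega;\Rm)$.

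Third, I identify the regularity of the limit. By Theorem~\ref{thm: auxiliary gamma}(iii), in the form of \eqref{lower bound geps} applied to the truncated kernel (admissible since it satisfies \eqref{all momenta}), together with lower semicontinuity of the $\Gamma$-liminf, we get
\begin{equation*}
{\rm MS}_1^r(u,\Omega)\le C\liminf_k G^r_{1,\e_{n_k}}(u_{n_k},\Omega)<+\infty .
\end{equation*}
Hence $u\in{\rm GSBV}^2(\Omega;\Rm)$, with $\int_\Omega|\nabla u|^2\,\dx x+\Hd(J_u)<+\infty$. Because $u\in L^\infty$, each component lies in ${\rm SBV}_{\rm loc}$ with $|Du^i|(\Omega')\le \int_{\Omega'}|\nabla u^i|\,\dx x+2\|u\|_\infty\Hd(J_u)$, a bound that is uniform in $\Omega'\subset\subset\Omega$. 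So $u\in{\rm SBV}^2(\Omega;\Rm)$ globally, using that $\Omega$ is bounded.

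The main obstacle is the first step: making sure Gobbino's compactness theorem, which is stated for his specific kernel or in one dimension with slicing, applies to the kernel $c_0\chi_{B_{r'}}$. If the citation does not cover it directly, I would re-run his slicing argument. For a.e. direction $\sigma$, the one-dimensional energies along lines bound discrete-type Mumford–Shah energies. Compactness of the slices follows from the one-dimensional estimate, and Fréchet–Kolmogorov then gives compactness via the translation estimate $\int|u_n(x+h)-u_n(x)|\,\dx x\le C(|h|+\e_n)$. That estimate is derived from the energy bound through Lemma~\ref{lemma: short range interactions} and Proposition~\ref{prop: interpolation}, which control $P^r_{\e_n}$ and hence long increments.
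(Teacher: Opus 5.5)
Your proposal is correct and follows the paper, which gives no argument beyond citing Gobbino's compactness theorem \cite[Theorem~5.4]{Gobbino} (see also \cite[Theorem~5.23]{BraidesApproximation}). Your componentwise reduction, the upgrade from $L^1_{\rm loc}$ to $L^1(\Omega;\Rm)$ via the uniform $L^\infty$ bound, and the identification $u\in{\rm SBV}^2(\Omega;\Rm)$ via the liminf inequality of Theorem~\ref{thm: auxiliary gamma}(iii) are the routine details behind that citation. Two minor remarks: if the cited result needs a smooth kernel, compare with a smooth radial bump lying below $c_0\chi_{B_{\min\{r,r_0\}}}$; and your fallback needs no slicing, because the translation estimate $\int_V|u_n(x+h)-u_n(x)|\,{\rm d}x\le C(|h|+\e_n)$ on $V\subset\subset\Omega$ (from Lemma~\ref{lemma: short range interactions} and Proposition~\ref{prop: interpolation}) already gives $L^1$-compactness by Fr\'echet--Kolmogorov, with the ${\rm SBV}^2$ regularity still coming from the liminf inequality.
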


We now introduce the minimisation problems for the non-local functionals we are interested in. 
Given $U\in\A(\Omega)$, $w\in L^1_{\rm loc}(\Omega;\Rm)$, and $s>0$, we set 
\begin{equation*}
    \mathcal{D}_{s,w}(U):=\big\{u\in L^1_{\rm loc}(\Omega;\Rm): \, u=w \text{ for $\Ld$-a.e.\ $x\in U$ }  \text{with dist}(x,\Rd\setminus U)<s\big\},
\end{equation*}
and consider the minimisation problems
\begin{equation*}
   \m_s^{F_{\e_n}}(w,U):=\inf\big\{F_{\e_n}(u,U): u\in \mathcal{D}_{\e_n s,w}(U)
   \big\}.
\end{equation*}
We recall that
\begin{equation*}
    \m_{s_1}^{F_{\e_n}}(w,U)\leq \m_{s_2}^{F_{\e_n}}(w,U)
\end{equation*}
for every $0<s_1<s_2$.

The following result shows that, if $\{F_{\e_n}\}_n$ $\Gamma$-converges to $F$ on a regular open set $U\subset \Omega$ and if $V\subset \subset U$, the minimum value of $\m^F(w,U)$ can be controlled from above by the asymptotic value of the minimisation problems $\m^{F_{\e_n}}_s(w,V)$ and  $H(w,U\setminus V)$. 

\begin{Lemma}\label{lemma liminf minima}
Let  $w\in {\rm SBV}^2(\Omega; \Rm)\cap L^\infty(\Omega; \Rm)$, and let  $U,V\in\A_{\rm reg}(\Omega)$ with $V\subset \subset U$. Assume that $\{F_{\e_n}(\cdot,U)\}_{n}$ $\Gamma$-converges to $F(\cdot,U)$ in $L^1_{\rm loc}(\Omega;\Rm)$. Then 
     \begin{equation}\label{claim liminf minima}
         \m^F(w,U)\leq \sup_{s>0}\, \liminf_{n\to +\infty}\m^{F_{\e_n}}_s(w,V) + c_2H(w,U\setminus V)+\Lambda|U\setminus V|.
     \end{equation}
\end{Lemma}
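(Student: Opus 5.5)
We argue by a gluing construction, in the spirit of the fundamental estimate of Lemma \ref{fundamental estimate truncated}. Fix $s>0$ and assume the right-hand side of \eqref{claim liminf minima} is finite. Pass to a subsequence (not relabelled) realising the $\liminf$, and pick almost-minimisers $v_n\in\mathcal{D}_{\e_n s,w}(V)$ with
\begin{equation*}
F_{\e_n}(v_n,V)\leq \m^{F_{\e_n}}_s(w,V)+\tfrac1n .
\end{equation*}
By \eqref{contraction} we may truncate and assume $\|v_n\|_{L^\infty(\Omega;\Rm)}\leq \sqrt m\|w\|_{L^\infty(\Omega;\Rm)}$; truncation preserves the boundary condition up to the same truncation of $w$, which is harmless since $w$ is bounded. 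Extend $v_n$ by $w$ on $\Omega\setminus V$. Then $v_n=w$ on the whole neighbourhood $\{x\in\Omega:{\rm dist}(x,V)<\e_n s\}\setminus V$ of the boundary layer, and in fact on $U\setminus V^{\e_n s}$, where $V^{t}:=\{x\in V:{\rm dist}(x,\Rd\setminus V)\geq t\}$.

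Next, estimate $F_{\e_n}(v_n,U)$ by splitting pairs $(x,x+\e_n\xi)$. Pairs with both points in $V$ give at most $F_{\e_n}(v_n,V)$. Pairs with both points in $U\setminus \overline{V^{\e_n s}}$, where $v_n=w$, give at most $F_{\e_n}(w,U\setminus\overline{V^{\e_n s}})$. The only remaining pairs have one point in $V^{\e_n s}$ and the other outside $V$, so $|\xi|\geq s$. By \eqref{comparison densities}, Corollary \ref{corollary : short range}/Lemma \ref{lemma: short range interactions} for long interactions, the $L^\infty$ bound, and the tail conditions \eqref{finite momenta}, \eqref{psi infinito}, \eqref{eta infinito}, these contribute at most $\omega(s)$ with $\omega(s)\to0$ as $s\to+\infty$, uniformly in $n$ large. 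To justify this, first use Lemma \ref{lemma F<FT} to reduce to $F^T_{\e_n}$ with $T<s$, so these cross terms vanish. Then take the $\limsup$ in $n$: $\limsup_n F_{\e_n}(w,U\setminus\overline{V^{\e_n s}})$ is controlled by \eqref{comparison functionals} and \eqref{upper bound heps} applied on slightly larger regular sets $U\setminus \overline{V'}$ with $V'\subset\subset V$. This yields $c_2H(w,U\setminus \overline{V'})+\Lambda|U\setminus V'|$, which tends to the required term as $V'\uparrow V$, up to the mass of $H(w,\cdot)$ on $\partial V$. That mass may be handled by choosing $V$ generic, or by absorbing it through lower semicontinuity and measure arguments.

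Since $v_n\to$ some limit is not guaranteed, apply Lemma \ref{lemma comp gobbino}: by the lower bound \eqref{comparison functionals}, $G_{1,\e_n}(v_n,U)$ is bounded, and $v_n$ is bounded in $L^\infty$, so up to a subsequence $v_n\to v$ in $L^1(U)$ with $v\in{\rm SBV}^2(U;\Rm)$ and $v=w$ on $U\setminus \overline V$. Since $V\subset\subset U$, $v$ is an admissible competitor for $\m^F(w,U)$. The $\Gamma$-liminf inequality then gives
\begin{equation*}
\m^F(w,U)\leq F(v,U)\leq\liminf_n F_{\e_n}(v_n,U)\leq \liminf_n \m_s^{F_{\e_n}}(w,V)+c_2H(w,U\setminus V)+\Lambda|U\setminus V|+\omega(s).
\end{equation*}
Taking the supremum over $s$, with $s\to\infty$, concludes the proof.

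The main obstacle is the cross-interaction term: the boundary layer has width only $\e_n s$. One must show that long-range interactions across it are small uniformly in $n$ once $s$ is large, combining the truncation Lemma \ref{lemma F<FT} with the energy bound $H^{r_0}_{\e_n}(v_n,U)$ from Proposition \ref{prop: interpolation}. The second delicate point is the precise boundary mass on $\partial V$.
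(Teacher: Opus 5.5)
Your proposal is correct and is essentially the paper's proof: you take almost-minimisers on $V$, truncate them at level $\|w\|_{L^\infty(\Omega;\Rm)}$ and extend them by $w$, use Lemma~\ref{lemma F<FT} with $T^*<T\le s$ to remove the interactions across the layer of width $\e_n s$, apply Lemma~\ref{lemma comp gobbino} for compactness, and bound the collars shrinking to $U\setminus V$ via \eqref{comparison functionals} and \eqref{upper bound heps}; you should just make explicit that $C^*$ is independent of $s$, which holds because $w$ is itself admissible, so $F_{\e_n}(v_n,V)\le F_{\e_n}(w,V)+\frac1n$ bounds $\sup_n H^{r_0}_{\e_n}(v_n,U)$ uniformly in $s>r_0$. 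The mass of $H(w,\cdot)$ on $\partial V$ is not an obstacle, because $H(w,U\setminus V)$ is the value of the finite Borel measure $H(w,\cdot)$ on the relatively closed set $U\setminus V\supset\partial V$ (the paper obtains this term the same way, letting $\sigma\to0^+$ in $((U\setminus V)+B_\sigma)\cap U$), so letting $V'\uparrow V$ gives exactly the required term, and no genericity of $V$, which is prescribed, is needed.
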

\begin{proof}
 We consider a sequence of functions $\{u_{n}\}_{n}\subset \mathcal{D}_{\e_n s,w}(V)$  such that 
 \begin{equation}\label{recovery on V}
     F_{\e_{n}}(u_{n}, V) \leq \m^{F_{\e_{n}}}_s(w,V) + \e_{n},
 \end{equation}
 and which, by  truncation, we  may assume to be satisfying $\|u_{n}\|_{L^\infty(\Omega;\Rm)} \leq {\sqrt{m}}\|w\|_{L^\infty(\Omega;\Rm)}$.

Let us fix $s>2r_0$, where $r_0$ is the constant introduced in \eqref{non zero in zero}.  Thanks to \eqref{non zero in zero} and \eqref{comparison functionals}, from this inequality we obtain
\begin{equation*}
    c_0G^{r_0}_{\e_n}(u_n,V)\leq F_{\e_n}(u_{n}, V) \leq \m^{F_{\e_n}}_s(w,V) + \e_n\leq H_{\e_n}(w,V)+\|\eta_{\e_n}\|_{L^1(\R^d)}|V|+\e_n,
 \end{equation*}
 where $c_0$ is the positive constant introduced in \eqref{non zero in zero} and $G_{\e_n}^{r_0}$ is as in \eqref{characteristicfunctionals G}. Recalling \eqref{upper bound heps}, this chain of inequalities implies 
\begin{equation}\label{sup bounded G}  \sup_{n} G^{r_0}_{\e_n}(u_n,V) \leq\sup_{n} \frac{1}{c_0}\Big(H_{\e_n}(w,U)+\|\eta_{\e_n}\|_{L^1(\R^d)}|U|+1\Big)<+\infty
\end{equation}
upon assuming $\e_n<1$ for every $n\in\N$, with the right hand side being independent of $s>2r_0$.

We introduce the extension $\widetilde{u}_n$  of the function $u_n$ obtained by setting $
\widetilde{u}_n:=u_n$ on $V$ and 
$\widetilde{u}_n:=w$ on $U\setminus V$.  We observe that
\begin{gather}\notag 
    G^{r_0}_{\e_n}(\widetilde{u}_n,U)\leq G^{r_0}_{\e_n}({u}_n, V)+ G^{r_0}_{\e_n}\big(w, \big((U\setminus V)+B_{\e_nr_0}\big)\cap U\big),
\end{gather}
which by \eqref{sup bounded G} implies 
\begin{equation}\label{sup bounded G bis}
    \sup_{n} G^{r_0}_{\e_n}(\widetilde{u}_n,U) \leq \big(1+\frac{1}{c_0}\big)\sup_{n} \Big(H_{\e_n}(w,U)+\|\eta_{\e_n}\|_{L^1(\R^d)}|U|+1\Big)<+\infty.
\end{equation}
Observe that by Proposition \ref{prop: interpolation}, inequality  \eqref{sup bounded G bis} also implies that  
\begin{equation*}
    S:=\sup_{n} H^{r_0}_{\e_n}(\widetilde{u}_n,U)<+\infty.
\end{equation*}

We now pass to a subsequence $\{\e_{n_k}\}_k$ such that 
\begin{equation*}
\liminf_{n\to+\infty}\m^{F_{\e_{n}}}_s(w,V)=\lim_{k\to+\infty}\m^{F_{\e_{n_k}}}_s(w,V).
\end{equation*}
Thanks to \eqref{sup bounded G bis}, we may apply Lemma \ref{lemma comp gobbino} to obtain a further subsequence, not relabelled, such that $\{\widetilde{u}_{n_k}\}_k$ converges strongly in $L^1(U;\Rm)$ to a function $u\in {\rm SBV}^2(U;\Rm)$, which also satisfies the boundary conditions $u=w$ $\Ld$-a.e.\ in $U\setminus V$.

For every $T>0$ let us consider the functional $F_{\e_n}^T$  defined by \eqref{def truncated functionals}. Thanks to Lemma \ref{lemma F<FT}, there is a constant $C^*$, depending only on $S$ and on $\|w\|_{L^\infty(\Omega;\Rm)}$, and which is therefore independent of $s$, such that for every $\delta>0$  it holds 
\begin{equation*}
    F_{\e_n}(\widetilde{u}_n,U)\leq F_{\e_n}^T(\widetilde{u}_n,U)+C^*\delta
\end{equation*}
for every $T>T^*=T^*(\delta)$ and $n$ large enough. 
Hence, given $\delta>0$ and further assuming that $ s>T^*$, recalling that $u$ is a competitor for the minimisation problem $\m^F(w,U)$,  by $\Gamma$-convergence we have
 \begin{align}\notag
     \m^F(w,U)&\leq F(u,U)\leq \liminf_{k\to+\infty}F_{\e_{n_k}}(\widetilde{u}_{n_k},U)\leq \liminf_{k\to+\infty}\Big(F_{\e_{n_k}}^T(\widetilde{u}_{n_k},U)+C^*\delta\Big)\\\notag
     & \leq \liminf_{k\to+\infty}\Big(F_{\e_{n_k}}^T({u}_{n_k},V)+ F^T_{\e_{n_k}}\big(w, \big((U\setminus V)+B_{\e_{n_k}T})\big)\cap U\big)\Big)+C^*\delta,
 \end{align}
 for every $T\in (T^*,s]$.
 Using \eqref{comparison functionals} and \eqref{upper bound heps}, from this last chain of inequalities we obtain that for every $\sigma>0$
 \begin{equation*}
      \m^F(w,U)\leq \liminf_{k\to+\infty}F_{\e_{n_k}}({u}_{n_k},V)+ 
      c_2H\big(w, \big((U\setminus V)+B_{\sigma}\big)\cap U\big)+\Lambda|((U\setminus V)+B_{\sigma})\cap U)|+C^*\delta.
 \end{equation*}
 By arbitrariness of $\sigma>0$ and by \eqref{recovery on V} we get
 \begin{align*}
     \m^F(w,U)&\leq  \liminf_{k\to+\infty}F_{\e_{n_k}}({u}_{n_k},V)+ 
      c_2H(w, U\setminus V)+\Lambda|U\setminus V|+C^*\delta\\
      &\leq  \lim_{k\to+\infty}\m^{F_{\e_{n_k}}}_s(w,V)+ 
      c_2H(w, U\setminus V)+\Lambda|U\setminus V|+C^*\delta.
 \end{align*}
 
 Taking the limit as $s\to+\infty$ and as $\delta\to 0^+$, we obtain \eqref{claim liminf minima}, concluding the proof.
\end{proof}

\begin{Remark}\label{remark: cmp supp}
Following the above proof, it is easily seen that the statement of the previous lemma can be improved upon assuming that there exists $T>0$ such that $f_{\e_n}(x,\xi,z)=0$ for a.e $x\in \Omega, \xi\in \Rd\setminus B_T, z\in \Rm$, and $n\in\N$.   In this case, we obtain that 
\begin{equation*}
         \m^F(w,U)\leq  \liminf_{n\to +\infty}\m^{F_{\e_n}}_s(w,V) + c_2H(w,U\setminus V)+\Lambda|U\setminus V|
     \end{equation*}
for all $s \geq T$.
\end{Remark}

In the following lemma we show that,  if $\{F_{\e_n}\}_n$ $\Gamma$-converges to $F$ on a regular open set $U\subset \Omega$, the minimum value of $\m^F(w,U)$ is always larger than the asymptotic value of the minimisation problems $\m^{F_{\e_n}}_s(w,U)$. 
\begin{Lemma}\label{lemma limsup minima}
       Let  $w\in {\rm SBV}^2(\Omega; \Rm)\cap L^\infty(\Omega; \Rm)$, let $s>0$, and let  $U\in\A_{\rm reg}(\Omega)$. Assume that $\{F_{\e_n}(\cdot,U)\}_n$ $\Gamma$-converges to $F(\cdot,U)$ in $L^1_{\rm loc}(\Omega;\Rm)$ as $n\to+\infty$. Then
     \begin{equation}\label{claim limsup minima}
         \limsup_{n\to +\infty}\m^{F_{\e_n}}_s(w,U)\leq \m^F(w,U).
     \end{equation}
\end{Lemma}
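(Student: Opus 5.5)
The plan is to take an almost-minimiser of $\m^F(w,U)$, build a recovery sequence for it through the $\Gamma$-convergence hypothesis, and modify that sequence near $\partial U$ so that it belongs to $\mathcal{D}_{\e_n s,w}(U)$ with asymptotically negligible extra energy. This extra cost is controlled with the cut-off (fundamental estimate) technique of Lemma \ref{fundamental estimate truncated}.

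Fix $\sigma>0$ and let $u\in{\rm SBV}^2(U;\Rm)$ with $u=w$ in a neighbourhood of $\partial U$ and $F(u,U)\le \m^F(w,U)+\sigma$. We may assume $\m^F(w,U)<+\infty$. Extend $u$ by $w$ outside $U$. By truncation ((f) of Proposition \ref{prop:compactness}, and $u=w$ near $\partial U$ with $w$ bounded) we may assume $\|u\|_{L^\infty}\le\sqrt m\|w\|_{L^\infty}$. Choose an open set $U'\subset\subset U$ such that $u=w$ on $U\setminus \overline{U'}$; fix $R<{\rm dist}(U',\Rd\setminus U)$ and take the layers $U_i=\{{\rm dist}(\cdot,U')<iR/N\}$ with cut-offs $\phi_i$ as in \eqref{properties of cutoffs}.

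By the $\Gamma$-convergence on $U$, take $u_n\to u$ in $L^1_{\rm loc}$ with $F_{\e_n}(u_n,U)\to F(u,U)$, truncated via Remark \ref{L infty recovery}. Set
\begin{equation*}
w^i_n:=\phi_iu_n+(1-\phi_i)w .
\end{equation*}
Since $\phi_i=0$ outside $U_{i+1}\subset\subset U$, for $n$ large we have $w^i_n=w$ on $\{x\in U:{\rm dist}(x,\Rd\setminus U)<\e_n s\}$, so $w^i_n\in\mathcal{D}_{\e_ns,w}(U)$ and hence $\m^{F_{\e_n}}_s(w,U)\le F_{\e_n}(w^i_n,U)$.

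To estimate $F_{\e_n}(w^i_n,U)$, first pass to $F^T_{\e_n}$ by Lemma \ref{lemma F<FT}. Its hypotheses hold because $w^i_n$ is uniformly bounded in $L^\infty$ and $F^T_{\e_n}(w^i_n,U)$ is bounded by the estimate below. This gives $F_{\e_n}(w^i_n,U)\le F^T_{\e_n}(w^i_n,U)+C^*\delta$ for $T>T^*(\delta)$ and $n$ large. Then split $F^T_{\e_n}(w^i_n,U)$ exactly as in \eqref{prima disuguaglianza subadd}:
\begin{itemize}
\item on $(U_i)_{\e_n}(\xi)$ the contribution is at most $F_{\e_n}(u_n,U)$;
\item on $(U\setminus\overline U_{i+1})_{\e_n}(\xi)$ it is at most $F_{\e_n}(w,U\setminus\overline{U'})$;
\item on the strip $S^i_{n,\xi}$, averaging over $i\in\{1,\dots,N-4\}$ with Lemma \ref{lemma: short range interactions} and Proposition \ref{prop: interpolation} (as in \eqref{ultima stima strip}) yields an index $i^*$ with error at most $C/(N-4)+C(N^2+N)\|u_n-w\|_{L^1(U_{N}\setminus \overline{U'})}$.
\end{itemize}
The bound for the strip needs $\sup_nF_{\e_n}(w,U)<\infty$, which follows from \eqref{comparison functionals} together with Theorem \ref{thm: auxiliary gamma}(ii),(iv) applied to $w\in{\rm SBV}^2\cap L^\infty$ (so $w\in BV$). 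The term $\|u_n-w\|_{L^1(U_N\setminus\overline{U'})}$ tends to $0$ because $u=w$ there.

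Letting $n\to\infty$ gives
\begin{equation*}
\limsup_n\m^{F_{\e_n}}_s(w,U)\le F(u,U)+\limsup_nF_{\e_n}(w,U\setminus\overline{U'})+C/(N-4)+C^*\delta .
\end{equation*}
The step I expect to be the main obstacle is making the middle term small. The bound \eqref{upper bound heps} gives $\limsup_n F_{\e_n}(w,U\setminus\overline{U'})\le c_2H(w,U\setminus\overline{U'})+\Lambda|U\setminus\overline{U'}|$, using that $U\setminus\overline{U'}$ can be chosen Lipschitz and Theorem \ref{thm: auxiliary gamma}(ii),(iv) gives pointwise limits for the fixed function $w$. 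The difficulty is that $U'$ must stay fixed while $n\to\infty$, since the cut-off constants depend on $R$, yet we need it to grow toward $U$. The resolution is that the near-minimiser $u$ may be chosen first and then $U'$ taken so large that $c_2H(w,U\setminus\overline{U'})+\Lambda|U\setminus\overline{U'}|<\sigma$; this remains compatible with $u=w$ on $U\setminus\overline{U'}$, because enlarging $U'$ only shrinks the region where $u$ must coincide with $w$.

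With that choice, let $N\to\infty$, then $\delta\to0$, then $\sigma\to0$ to obtain \eqref{claim limsup minima}.
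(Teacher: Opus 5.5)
Your proposal is correct and follows essentially the same route as the paper. The paper also takes a truncated near-minimiser $u$ of $\m^F(w,U)$ and a bounded recovery sequence, and glues it to the boundary datum in a layer where $u=w$ using the cut-off/averaging argument of Lemma \ref{fundamental estimate truncated}; it passes between $F^T_{\e_n}$ and $F_{\e_n}$ via Lemma \ref{lemma F<FT}, and makes the layer contribution $c_2H(w,\cdot)+\Lambda|\cdot|$ small by choosing the layer after $u$. Your differences are inessential: you glue with $w$ instead of $u$, and you keep the gluing strips compactly inside $U$, so $L^1_{\rm loc}$ convergence suffices without invoking Lemma \ref{lemma comp gobbino}.
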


\begin{proof}
    Since $w \in {\rm SBV}^2(\Omega;\Rm)$, we have that $H(w,U)<+\infty$, which by \eqref{comparison functionals} clearly implies  $\m^F(w,U)<+\infty$.
    Let us fix $\sigma>0$ and $u\in {\rm SBV}^2(U;\Rm)\cap L^1_{\rm loc}(\Omega;\Rm)$ such that $F(u,U) < \m^F(w,U) + \sigma$ and $u=w$ in a neighbourhood of $\partial U$. Observe that, since $F$ satisfies property $(f)$ of Proposition \ref{prop:compactness} on $U$, it is not restrictive to assume that $u\in L^\infty(\Omega;\Rm)$.
    
  We pass to a not relabelled subsequence such that the limsup in  \eqref{claim limsup minima} is actually a limit and consider a sequence $\{u_n\}_n\subset L^1_{\rm loc}(\Omega;\Rm)$, converging to $u$ in $L^1_{\rm loc}(\Omega;\Rm)$, such that 
    \begin{equation}\label{quasi minimo limsup}\lim_{n\to+\infty}F_{\e_n}(u_n,U)=F(u,U)<\m^F(w,U)+\sigma.
    \end{equation}
    By Lemma \ref{lemma:vertical truncations} we can also assume that $\|u_n\|_{L^\infty(\Omega;\Rm)}\leq {\sqrt{m}}\|u\|_{L^\infty(\Omega;\Rm)}$ for every $n\in\N$, so that, possibly  passing to a further subsequence, we can assume that $\{u_n\}_n$ converges to $u$ in $L^1(U;\Rm)$  by Lemma \ref{lemma comp gobbino}.

    Let $V\in\A_{\rm reg}(U)$ be obtained intersecting $U$ with a neighbourhood of $\partial U$ on which $u=w$ $\Ld$-a.e. and let $U'\subset \subset U''\subset \subset U$ be open sets such that $U\setminus U' \subset V$. Having fixed $T>0$ and $N\in\N$, we may argue as in the proof of Lemma \ref{fundamental estimate truncated} to find a sequence of cutoff functions $\{\phi_n\}_n$ compactly supported in $U''$ such that $\phi_n = 1$ in $\overline{U'}$ and, setting $v_{n,N}:= \phi_n u_n + (1-\phi_n)u$, we have 
     \begin{equation}\label{first bound limsup}
        F_{\eps_n}^T (v_{n,N}, U) \le F_{\eps_n}(u_n, U) + F_{\eps_n}(w, V) + \frac{C}{N-4}+ C(N^2+N)\|u_n - w\|_{L^1(V;\Rm)},
     \end{equation}
     for a positive constant $C>0$, independent of $n$ and $N$, and for $n$ large enough. Note that $\{v_{n,N}\}_n$ converges to $u$ in $L^1_{\rm loc}(\Omega;\Rm)$ as $n\to+\infty$ and that $\|v_{n,N}\|_{L^\infty(\Omega;\Rm)}\leq {2\sqrt{m}}\|u\|_{L^\infty(\Omega;\Rm)}$ for every $n\in\N$. Moreover, we observe that, since the supports of $\{\phi_n\}_n$  are compactly contained in $U''$, $U\setminus U'\subset V$, and $u=w$ $\Ld$-a.e.\ on $V$, we have   $v_{n,N}\in \mathcal{D}_{\e_ns,w}(U)$ for every $n$ large enough.

   Then,  by \eqref{comparison functionals} and \eqref{upper bound heps}  we get
     \begin{equation}\notag
         \limsup_{n\to +\infty} F_{\e_n}(w, V) \leq \limsup_{n \to +\infty} H_{\e_n}(w,V) + \Lambda |V| \leq c_2 H(w,V) + \Lambda|V|.
     \end{equation}
      Recalling that $\|u_n-w\|_{L^1(V;\Rm)}\to 0$ as $n\to+\infty$ and \eqref{quasi minimo limsup}, we may pass to the limsup as $n\to+\infty$ in \eqref{first bound limsup}  and obtain
     \begin{equation*}\limsup_{n\to + \infty}  F_{\eps_n}^T (v_{n,N}, U) \leq \m^F(w,U)+\sigma+ c_2H(w,V)+ \Lambda |V|+\frac{C}{N-4}\end{equation*}
     for every $N\in\N$, and by arbitrariness of $\sigma$ and $V$, we get
      \begin{equation}\label{leq inter}\limsup_{n\to + \infty}  F_{\eps_n}^T (v_{n,N}, U) \leq \m^F(w,U)+\frac{C}{N-4}.
      \end{equation}
     
    To conclude, we  show that for every $N\in\N$ we have
    \begin{equation}\label{to conclude limsup}
        \limsup_{n\to +\infty}\m^{F_{\e_n}}_{s}(w,U)\leq \sup_{T>0}\,\limsup_{n\to+ \infty} F_{\eps_n}^T (v_{n,N}, U).
    \end{equation}
To see this, we begin noting that from  
 \eqref{leq inter} and Lemma  \ref{lemma F<FT} it follows that 
\begin{equation*}
S:=\sup_{n}H^{r_0}_{\e_n}(v_{n,N},U)<+\infty
\end{equation*}
 and  that there is a positive constant $C^*$, depending only on  $S$, $N$, and $\|u\|_{L^\infty(\Omega;\Rm)}$,  such that for every  $\delta>0$  
\begin{equation}\notag 
    F_{\e_n}(v_{n,N},U)\leq F_{\e_n}^T(v_{n,N},U)+C^*\delta,
\end{equation}
 for every $T>T^*(\delta)$ and $n$ large enough.
Recalling that  $v_{n,N}\in \mathcal{D}_{\e_ns,w}(U)$, this implies that 
\begin{equation}\notag
\m^{F_{\e_n}}_s(w,U)\leq F_{\e_n}^T(v_{n,N},U)+C^*\delta
\end{equation}
for every $T>T^*(\delta)$. Passing to the limsup as $n\to+\infty$ and to the limit as $T\to+\infty$ and $\delta\to 0^+$, we obtain \eqref{to conclude limsup}.
Finally, combining \eqref{leq inter} and \eqref{to conclude limsup} and passing to the limit as $N\to+\infty$, we get \eqref{claim limsup minima}, concluding the proof.
\end{proof}

We recall the definition of the functions $f'_{\rm bulk},$ $f''_{\rm bulk}$, $f'_{\rm surf}$, and $f''_{\rm surf}$ given in Section \ref{section : functionals}. For every $x\in\Omega$, $L\in\Rmd$, $z\in\Rm$, and $\nu\in\Sd$ we let  
\begin{align}\label{f'bulk}
   f_{\rm bulk}'(x,L) & := \limsup_{r\to 0^+}\,\sup_{s>0}\,\liminf_{n\to+\infty}\frac{\m_s^{F_{\e_n}}( L, Q(x,r))}{r^d},\\
    f_{\rm bulk}''(x,L) & := \limsup_{r\to 0^+}\,\sup_{s>0}\,\limsup_{n\to+\infty}\frac{\m_s^{F_{\e_n}}( L, Q(x,r))}{r^d},\\
    f_{\rm surf}'(x,\zeta,\nu) & :=\limsup_{r\to 0^+}\,\sup_{s>0}\,\liminf_{n\to+\infty}\frac{\m_s^{F_{\e_n}}(u_{x,\zeta,\nu}, Q^\nu(x,r))}{r^{d-1}},\\\label{f''surf}
    f_{\rm surf}''(x,\zeta,\nu) & :=\limsup_{r\to 0^+}\,\sup_{s>0}\,\limsup_{n\to+\infty}\frac{\m_s^{F_{\e_n}}(u_{x,\zeta,\nu}, Q^\nu(x,r))}{r^{d-1}}.
\end{align}

The following two propositions prove Theorem \ref{theorem: minima}.

\begin{Proposition}\label{prop: necessary}
   Assume that there exists a functional $F\colon L^1_{\rm loc}(\Omega;\Rm)\times \A(\Omega)\to [0,+\infty]$ such that for every $U\in\A_{\rm reg}(\Omega)$ the sequence $\{F_{\e_n}(\cdot,U)\}_{n}$ $\Gamma$-converges to $F(\cdot,U)$ with respect the topology of $L^1_{\rm loc}(\Omega;\Rm)$ as $n\to+\infty$.
   Then the functions $f_{\rm bulk}$ and $f_{\rm surf}$ defined in \eqref{Gammabulk} and \eqref{Gammasurf}, respectively, satisfy
  \begin{gather}\label{first necessary}
    f_{\rm bulk}(x,L)=f'_{\rm bulk}(x,L)=f''_{\rm bulk}(x,L),\\\label{second necessary}
   f_{\rm surf}(x,\zeta,\nu)=f'_{\rm surf}(x,\zeta,\nu)=f''_{\rm surf}(x,\zeta,\nu),
\end{gather}
   for all $x\in\Omega$, $L\in\Rmd$, $\zeta\in\Rm$, and $\nu\in\Sd$.
\end{Proposition}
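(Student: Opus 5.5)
The plan is to squeeze $\m_s^{F_{\e_n}}$ on small cubes between two quantities built from $\m^F$, using Lemma \ref{lemma liminf minima} for the lower side and Lemma \ref{lemma limsup minima} for the upper side. We treat the bulk case in detail; the surface case is identical, with $Q^\nu(x,r)$, $u_{x,\zeta,\nu}$ and normalisation $r^{d-1}$. Since $f'_{\rm bulk}\le f''_{\rm bulk}$ trivially, it suffices to show
\[
f''_{\rm bulk}\le f_{\rm bulk}\le f'_{\rm bulk}.
\]
The affine map $L$ is not bounded on $\Omega$, but we may replace it by a bounded ${\rm SBV}^2\cap L^\infty$ function $w$ that equals $L$ on a neighbourhood of $\overline{Q(x,r)}$. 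Both $\m^F$ and $\m_s^{F_{\e_n}}$ on cubes inside that neighbourhood depend only on the values of $w$ there, because of locality (c) and the definition of $\mathcal D_{s,w}$. The same device handles $u_{x,\zeta,\nu}$, which is already bounded.

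For the upper bound, fix $x$, $r$ small and $s>0$. Lemma \ref{lemma limsup minima} applied with $U=Q(x,r)\in\A_{\rm reg}(\Omega)$ gives $\limsup_n \m_s^{F_{\e_n}}(L,Q(x,r))\le \m^F(L,Q(x,r))$. We then take the supremum over $s$, divide by $r^d$, and take $\limsup_{r\to0^+}$; the right-hand side becomes exactly \eqref{Gammabulk}. This yields $f''_{\rm bulk}\le f_{\rm bulk}$.

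For the lower bound, fix $0<t<1$ and apply Lemma \ref{lemma liminf minima} with $U=Q(x,r)$ and $V=Q(x,tr)$:
\[
\m^F(L,Q(x,r))\le \sup_{s}\liminf_n \m_s^{F_{\e_n}}(L,Q(x,tr)) + c_2H(L,Q(x,r)\setminus \overline{Q(x,tr)})+\Lambda r^d(1-t^d).
\]
The error terms are explicit. For the affine map, $H(L,\cdot)\le C(|L|^2+|L|)\,r^d(1-t^d)$, so after dividing by $r^d$ the total error is $C_L(1-t^d)$. For the surface function, $H$ on the frame equals $c(\mu_2+\kappa|\zeta|)\,\Hd(\text{jump in frame})\le C_\zeta r^{d-1}(1-t)$. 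Dividing by $r^d$ and writing $(tr)^d=t^d r^d$, we obtain
\[
\frac{\m^F(L,Q(x,r))}{r^d}\le t^d\,\frac{\sup_s\liminf_n \m_s^{F_{\e_n}}(L,Q(x,tr))}{(tr)^d}+C_L(1-t^d).
\]
Taking $\limsup_{r\to0^+}$, the first term on the right is bounded by $t^d f'_{\rm bulk}(x,L)$, because $tr\to0$ as well. Letting $t\to1^-$ then gives $f_{\rm bulk}\le f'_{\rm bulk}$.

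The main obstacle is this lower-bound step. Lemma \ref{lemma liminf minima} only compares across a nested pair $V\subset\subset U$, so the boundary layer $Q(x,r)\setminus Q(x,tr)$ must be shown to cost $o(r^d)$ (resp. $o(r^{d-1})$) uniformly as $t\to1$. For the surface case, one must also check that the cubes $Q^\nu(x,tr)$ and $Q^\nu(x,r)$ are concentric with the same orientation, so that the jump hyperplane of $u_{x,\zeta,\nu}$ passes through both. In addition, $f'_{\rm surf}$ is a $\limsup$ in $r$, so the passage from the scale $r$ to the scale $tr$ must be done with the $\limsup$ and not with a limit, as in the display above.
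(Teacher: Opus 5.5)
Your argument is correct and is essentially the paper's: $f''_{\rm bulk}\le f_{\rm bulk}$ (resp.\ $f''_{\rm surf}\le f_{\rm surf}$) comes from Lemma~\ref{lemma limsup minima}, and $f_{\rm bulk}\le f'_{\rm bulk}$ (resp.\ $f_{\rm surf}\le f'_{\rm surf}$) comes from Lemma~\ref{lemma liminf minima} on nested concentric cubes. The only difference is cosmetic: the paper takes $V=Q(x,r)\subset\subset U=Q(x,r+r^2)$ so the side ratio tends to $1$ automatically, whereas you fix the ratio $t$ and let $t\to1^-$ at the end; also, since $\Omega$ is bounded, $L$ already lies in ${\rm SBV}^2(\Omega;\Rm)\cap L^\infty(\Omega;\Rm)$, so your replacement of $L$ by a bounded $w$ is unnecessary, though harmless.
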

\begin{proof}
Let us fix $x\in\Omega$, $L\in\Rmd$, $\zeta\in\Rm$, and $\nu\in\Sd$. We begin proving \eqref{first necessary}. Given $r>0$, we set $\sigma:=r+r^2$ and apply Lemma \ref{lemma liminf minima} with $w=L$, $U=Q(x,\sigma)$, and $V=Q(x,r)$ to get 
\begin{equation}\notag
    \frac{\m^{F}(L,Q(x,\sigma))}{\sigma^d}\leq \sup_{s>0}\,\liminf_{n\to +\infty} \frac{\m^{F_{\e_n}}_s(L,Q(x,r))}{\sigma^d}+[c_2(\lambda_2|L|^2+\kappa|L|)+\Lambda]\Big(1-\frac{r^{d}}{\sigma^{d}}\Big).
\end{equation}
 Since $\sigma/r\to 1$ as $r\to 0^+$, from the previous inequality we get $f_{\rm bulk}(x,L)\leq f'_{\rm bulk}(x,L)$. The inequality $f''_{\rm bulk}(x,L)\leq f_{\rm bulk}(x,L)$ is obtained applying Lemma \ref{lemma limsup minima}.

To prove \eqref{second necessary}, we apply again Lemma \ref{lemma liminf minima} with $w=u_{x,\zeta,\nu}$, $U=Q^\nu(x,\sigma)$, and $V=Q^\nu(x,r)$ to get 
\begin{equation}\notag
    \frac{\m^{F}(u_{x,\zeta,\nu},Q^\nu(x,\sigma))}{\sigma^{d-1}}\leq \sup_{s>0}\, \liminf_{n\to +\infty} \frac{\m^{F_{\e_n}}_s(u_{x,\zeta,\nu},Q^\nu(x,r))}{\sigma^{d-1}} + c_2(\mu_2+\kappa|\zeta|) \Big(1-\frac{r^{d-1}}{\sigma^{d-1}}\Big) + \Lambda \Big(\frac{\sigma^d-r^{d}}{\sigma^{d-1}}\Big),
\end{equation}
Since $\sigma/r\to 1$ as $r\to 0^+$, from the previous  inequality we get $f_{\rm surf}(x,\zeta,\nu)\leq f'_{\rm surf}(x,\zeta,\nu)$. The inequality $f''_{\rm surf}(x,\zeta,\nu)\leq f_{\rm surf}(x,\zeta,\nu)$ follows from an application of Lemma \ref{lemma limsup minima}.
\end{proof}

\begin{Proposition}\label{prop: sufficiency}
 Assume that there exist functions $\hat{f}_{\rm bulk}\colon\Omega\times \R^{m\times d}\to [0,+\infty)$ and $\hat{f}_{\rm surf}\colon\Omega\times \Rm\times \Sd\to[0,+\infty)$ such that 
\begin{equation} \label{sufficient bulk}
    \hat{f}_{\rm bulk}(x,L)=f'_{\rm bulk}(x,L)=f''_{\rm bulk}(x,L),
\end{equation}
for $\Ld$-a.e.\ $x\in\Omega$ and for all $L\in\Rd$ and
\begin{equation}\label{sufficient surf}
    \hat{f}_{\rm surf}(x,\zeta,\nu)=f'_{\rm surf}(x,\zeta,\nu)=f''_{\rm surf}(x,\zeta,\nu)
\end{equation}
 for $\Hd$-a.e.\ $x\in\Omega$, for all $\zeta\in\Rm$ and $\nu\in\Sd$.
Then for every $U\in\A_{\rm reg}(\Omega)$ the sequence $\{F_{\e_n}(\cdot,U)\}_n$ $\Gamma$-converges  with respect to the topology of  $L^1_{\rm loc}(\Omega;\Rm)$ to the functional $F(\cdot,U)$ defined  by 
\begin{equation}\notag 
   \hspace{-0.15 cm} F(u,U):=\begin{cases}\displaystyle\int_U \!\!\hat{f}_{{\rm bulk}}(x,\nabla u){\rm d}x+\!\!\int_{J_u\cap U} \!\!\!\!\!\hat{f}_{{\rm surf}}(x, [u],\nu_u){\rm d}\Hd \!\!&\text{if }u\in L^1_{\text{\rm loc}}(\Omega;\Rm) {\cap}\, {\rm GSBV}^2(U;\Rm),\\
     +\infty \!\!&\text{if }u\in L^1_{\text{\rm loc}}(\Omega;\Rm) {\setminus}  {\rm GSBV}^2(U;\Rm). 
     \end{cases}
\end{equation}
\end{Proposition}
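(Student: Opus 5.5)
The plan is a standard Urysohn-type argument: compactness of $\Gamma$-convergence plus uniqueness of the limit, read off through the characterisation of the densities already obtained.

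Fix $U\in\A_{\rm reg}(\Omega)$. It suffices to show that every subsequence of $\{\e_n\}_n$ has a further subsequence along which $F_{\e_n}(\cdot,U)$ $\Gamma$-converges to the functional $F(\cdot,U)$ defined in the statement. Indeed, if the full sequence did not $\Gamma$-converge to $F(\cdot,U)$, there would be $u$ and a subsequence along which either the $\Gamma$-$\liminf$ is strictly below $F(u,U)$ or the $\Gamma$-$\limsup$ is strictly above it. This property is inherited by every further subsequence, which contradicts the claim.

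\textbf{Extracting a representable limit.} Take an arbitrary subsequence $\{\e_{n_j}\}_j$. By Theorem \ref{thm:compactness} (that is, Propositions \ref{prop:compactness} and \ref{prop: representation}) there is a further subsequence $\{\e_{n_{j_k}}\}_k$ and a functional $\widetilde F$ such that $F_{\e_{n_{j_k}}}(\cdot,V)$ $\Gamma$-converges to $\widetilde F(\cdot,V)$ for every $V\in\A_{\rm reg}(\Omega)$. Moreover, on ${\rm GSBV}^2$ we have
\[
\widetilde F(u,V)=\int_V \tilde f_{\rm bulk}(x,\nabla u)\,\dx x+\int_{J_u\cap V}\tilde f_{\rm surf}(x,[u],\nu_u)\,\dx\Hd ,
\]
and $\widetilde F(u,V)=+\infty$ otherwise, where $\tilde f_{\rm bulk}$ and $\tilde f_{\rm surf}$ are given by \eqref{Gammabulk} and \eqref{Gammasurf} with $F$ replaced by $\widetilde F$.

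\textbf{Identifying the densities.} Proposition \ref{prop: necessary} applies to the subsequence $\{\e_{n_{j_k}}\}_k$ and gives
\[
\tilde f_{\rm bulk}=f'_{\rm bulk,sub}=f''_{\rm bulk,sub},\qquad \tilde f_{\rm surf}=f'_{\rm surf,sub}=f''_{\rm surf,sub}
\]
everywhere, where the subscript ``sub'' indicates that \eqref{f'bulk}--\eqref{f''surf} are computed along the subsequence. Since inner $\liminf$ and $\limsup$ can only move inward when passing to a subsequence, for each fixed $r$ and $s$ the subsequence $\liminf$ is at least the full-sequence $\liminf$, and the subsequence $\limsup$ is at most the full-sequence $\limsup$. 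These inequalities are preserved by $\sup_{s>0}$ and by $\limsup_{r\to0^+}$, so
\[
f'_{\rm bulk}\le f'_{\rm bulk,sub}\le f''_{\rm bulk,sub}\le f''_{\rm bulk}.
\]
By \eqref{sufficient bulk} the two outer terms coincide with $\hat f_{\rm bulk}$ for $\Ld$-a.e.\ $x$. Hence $\tilde f_{\rm bulk}(x,\cdot)=\hat f_{\rm bulk}(x,\cdot)$ for $\Ld$-a.e.\ $x$, and in the same way $\tilde f_{\rm surf}=\hat f_{\rm surf}$ for $\Hd$-a.e.\ $x$ and all $(\zeta,\nu)$.

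\textbf{Concluding.} For $u\in{\rm GSBV}^2(U;\Rm)$, the bulk integral charges only $\Ld$-measure and the surface integral lives on $J_u$, which is $\Hd$-$\sigma$-finite. Therefore changing the densities on $\Ld$-null, respectively $\Hd$-null, sets of $x$ does not change the integrals, and $\widetilde F(\cdot,U)=F(\cdot,U)$. Outside ${\rm GSBV}^2(U;\Rm)$ both functionals equal $+\infty$.

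\textbf{Main obstacle.} The delicate step is this last one: the exceptional $\Hd$-null set for the surface density must be independent of $(\zeta,\nu)$, and it is, since the hypothesis \eqref{sufficient surf} is stated for $\Hd$-a.e.\ $x$ uniformly in $\zeta,\nu$. One must also check that the integrals are well defined. Measurability of $\hat f$ follows from its a.e.\ agreement with the Borel densities $\tilde f$, so no extra argument is needed beyond noting this.
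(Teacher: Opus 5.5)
Your proof is essentially the paper's: you extract a $\Gamma$-convergent subsequence via Theorem~\ref{thm:compactness}, squeeze its densities between the full-sequence $f'$ and $f''$ using Proposition~\ref{prop: necessary}, and conclude by the Urysohn property of $\Gamma$-convergence, which the paper simply cites as \cite[Proposition~8.3]{DalBook}. The one slip is in your justification of that property: along subsequences the $\Gamma$-$\liminf$ can only increase and the $\Gamma$-$\limsup$ can only decrease, so ``$\Gamma$-$\liminf<F(u,U)$'' or ``$\Gamma$-$\limsup>F(u,U)$'' is not inherited by further subsequences; the neighbourhood characterisation of $\Gamma$-limits in the metrisable space $L^1_{\rm loc}(\Omega;\Rm)$ instead gives a subsequence along which the $\Gamma$-$\limsup$ lies below, or the $\Gamma$-$\liminf$ lies above, $F(u,U)$ (which is inherited), so either make that correction or just cite the result.
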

\begin{proof}
       Thanks to Theorem \ref{thm:compactness}, there exists a subsequence $\{\e_{n_k}\}_n$ such that 
for every open set $U\in\A_{\rm reg}(\Omega)$ the sequence $\{F_{\e_{n_k}}(\cdot,U)\}_k$ $\Gamma$-converges  in the $L^1_{\rm loc}(\Omega;\Rm)$ topology as $k\to+\infty$  to the functional  $F(\cdot,U)$  defined for $u\in L^1_{\rm loc}(\Omega;\Rm)$ by
\begin{equation}\notag 
   \hspace{-0.15 cm} F(u,U):=\begin{cases}\displaystyle\int_U \!\!{f}_{{\rm bulk}}(x,\nabla u){\rm d}x+\!\!\int_{J_u\cap U} \!\!\!\!\!{f}_{{\rm surf}}(x, [u],\nu_u){\rm d}\Hd \!\!&\text{if }u\in L^1_{\text{\rm loc}}(\Omega;\Rm) {\cap}\, {\rm GSBV}^2(U;\Rm),\\
     +\infty \!\!&\text{if }u\in L^1_{\text{\rm loc}}(\Omega;\Rm) {\setminus}  {\rm GSBV}^2(U;\Rm). 
     \end{cases}
\end{equation}
where $f_{\rm bulk}$ and $f_{\rm surf}$ are given by \eqref{Gammabulk} and \eqref{Gammasurf}, respectively.

By Proposition \ref{prop: necessary}, we note that 
\begin{align*}
    f_{{\rm  bulk}}(x,L) & = \limsup_{r\to 0^+}\,\sup_{s>0}\,\liminf_{k\to+\infty}\frac{\m_s^{F_{\e_{n_k}}}( L, Q(x,r))}{r^d} \\
    & \geq \limsup_{r\to 0^+}\,\sup_{s>0}\,\liminf_{n\to+\infty}\frac{\m_s^{F_{\e_n}}( L, Q(x,r))}{r^d} = \hat{f}_{\rm bulk}(x,L);
\end{align*}
and similarly
\begin{align*}
    f_{{\rm  bulk}}(x,L) & = \limsup_{r\to 0^+}\,\sup_{s>0}\,\limsup_{k\to+\infty}\frac{\m_s^{F_{\e_{n_k}}}( L, Q(x,r))}{r^d} \\
    & \leq \limsup_{r\to 0^+}\,\sup_{s>0}\,\limsup_{n\to+\infty}\frac{\m_s^{F_{\e_n}}( L, Q(x,r))}{r^d} = \hat{f}_{\rm bulk}(x,L),
\end{align*}
which implies $f_{{\rm  bulk}}(x,L)=\hat{f}_{\rm bulk}(x,L)$ for $\mathcal{L}^d$-a.e.\ $x\in\Omega$ and for every $L\in \Rmd$. A similar argument proves that $f_{{\rm  bulk}}(x,\zeta,\nu)=\hat{f}_{\rm surf}(x,\zeta,\nu)$ for $\mathcal{H}^{d-1}$-a.e.\ $x\in\Omega$ and for every $\zeta\in \Rm$ and $\nu\in \Sd$.

 Finally, since the functions $\hat{f}_{\rm bulk}$ and $\hat{f}_{\rm surf}$ are  by \eqref{sufficient bulk} and \eqref{sufficient surf} independent of the chosen subsequence, by  the Urysohn property of $\Gamma$-convergence (see \cite[Proposition~8.3]{DalBook}) the original sequence $\{F_{\e_n}(\cdot,U)\}$ $\Gamma$-converges to $F(\cdot,U)$ for every $U\in\A_{\rm reg}(\Omega)$, concluding the proof.
\end{proof}

\begin{Remark}\label{rmk: truncated}
 In view of Remark \ref{remark: cmp supp}, if for some $T>0$ it holds $f_{\e_n}(x,\xi,z) = 0$ for $\Ld$-a.e.\ $x\in \Omega,\, \xi \in \R^d\setminus B_T$ and for all $z \in \Rm$ and $n\in\N$, then in definitions \eqref{f'bulk}-\eqref{f''surf} we may neglect the supremum over $s>0$ and simply consider a fixed $s\geq T$, and have Proposition \ref{prop: necessary} and Proposition \ref{prop: sufficiency} hold unaltered.
\end{Remark}

We conclude mentioning a further characterization of the energy densities in the $\Gamma$-limit that involves functionals taking into account finite-range interactions.

\begin{Proposition}
    Let $\{T_j\}_j$ be a monotonically increasing sequence such that $T_j\to+\infty$ as $j\to+\infty$, and assume that for every $j\in\N$ and for every $U\in\A_{\rm reg}(\Omega)$ the sequence $\{F^{T_j}_{\e_n}(\cdot,U)\}_{n}$ $\Gamma$-converges  with respect the topology of $L^1_{\rm loc}(\Omega;\Rm)$ to the functional $F^{T_j}(\cdot,U)$ defined by 
\begin{equation*}\notag 
   \hspace{-0.15 cm} F^{T_j}(u,U):=\begin{cases}\displaystyle\!\int_U \!f^{T_j}_{{\rm bulk}}(x,\nabla u){\rm d}x+\!\!\int_{J_u\cap U} \!\!f^{T_j}_{{\rm surf}}(x, [u],\nu_u){\rm d}\Hd \!\!&\text{if }u\in L^1_{\text{\rm loc}}(\Omega;\Rm) {\cap}\, {\rm GSBV}^2(U;\Rm),\\
     +\infty \!\!&\text{if }u\in L^1_{\text{\rm loc}}(\Omega;\Rm) {\setminus}  {\rm GSBV}^2(U;\Rm)
     \end{cases}
\end{equation*}  
as $n\to+\infty$. Then, for every $U\in\A_{\rm reg}(\Omega)$ the sequence $\{F_{\e_n}(\cdot,U)\}_{n}$ $\Gamma$-converges with respect the topology of $L^1_{\rm loc}(\Omega;\Rm)$ to 
  \begin{equation*}\notag 
   \hspace{-0.15 cm} F(u,U):=\begin{cases}\displaystyle\int_U \!\!f_{{\rm bulk}}(x,\nabla u){\rm d}x+\!\!\int_{J_u\cap U} \!\!\!\!\!f_{{\rm surf}}(x, [u],\nu_u){\rm d}\Hd \!\!&\text{if }u\in L^1_{\text{\rm loc}}(\Omega;\Rm) {\cap}\, {\rm GSBV}^2(U;\Rm),\\
     +\infty \!\!&\text{if }u\in L^1_{\text{\rm loc}}(\Omega;\Rm) {\setminus}  {\rm GSBV}^2(U;\Rm)
     \end{cases}
\end{equation*}  
 as $n\to+\infty$.
 
 Moreover, it holds that
 \begin{align*}
     f_{{\rm bulk}}(x,L) & = \lim_{j\to+\infty} \limsup_{r\to 0^+}\liminf_{n\to+\infty}\frac{\m_{T_j}^{F^{T_j}_{\e_n}}( L, Q(x,r))}{r^d} \\
     & = \lim_{j\to+\infty} \limsup_{r\to 0^+}\limsup_{n\to+\infty}\frac{\m_{T_j}^{F^{T_j}_{\e_n}}( L, Q(x,r))}{r^d}
 \end{align*} 
 and 
 \begin{align*}
     f_{\rm surf}(x,\zeta,\nu) & =\lim_{j\to+\infty}\limsup_{r\to 0^+}\liminf_{n\to+\infty}\frac{\m_{T_j}^{F^{T_j}_{\e_n}}(u_{x,\zeta,\nu}, Q^\nu(x,r))}{r^{d-1}} \\
     & = \lim_{j\to+\infty}\limsup_{r\to 0^+}\limsup_{n\to+\infty}\frac{\m_{T_j}^{F^{T_j}_{\e_n}}(u_{x,\zeta,\nu}, Q^\nu(x,r))}{r^{d-1}}.
 \end{align*}
for all $x\in\Omega$, $L\in\Rmd$, $\zeta\in\Rm$, and $\nu\in\Sd$.
\end{Proposition}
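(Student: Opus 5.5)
The plan is to reduce everything to Proposition \ref{proposition: truncated functionals} together with the integral representation and the minimum-problem characterisation, applied to the truncated families $\{F^{T_j}_{\e_n}\}_n$.

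\emph{Step 1: the $\Gamma$-limit of $\{F_{\e_n}\}_n$.} By hypothesis, for each $j$ and each $U\in\A_{\rm reg}(\Omega)$ the $\Gamma$-limit $F^{T_j}(\cdot,U)$ exists. Proposition \ref{proposition: truncated functionals} then gives that $\{F_{\e_n}(\cdot,U)\}_n$ $\Gamma$-converges to $\lim_j F^{T_j}(\cdot,U)$. This limit exists and equals $\sup_j$, since $F^{T}_{\e}$ is increasing in $T$ and the $T_j$ are monotone. Since the full sequence converges, Theorem \ref{thm:compactness} applied along any subsequence identifies this limit $F(\cdot,U)$ with the integral functional whose densities are given by \eqref{Gammabulk} and \eqref{Gammasurf}. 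This yields the first claim.

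\emph{Step 2: the densities.} Each truncated integrand $f_{\e_n}\chi_{B_{T_j}}(\xi)$ satisfies the standing assumptions with $\rho_i\chi_{B_{T_j}}$, $\psi_\e\chi_{B_{T_j}}$, $\eta_\e\chi_{B_{T_j}}$. The one exception is \eqref{psi momento}, which becomes $\le 1$; this is harmless, since only upper bounds on $\psi_\e$ are used. By Theorem \ref{thm:compactness} and Proposition \ref{prop: necessary}, the densities $f^{T_j}_{\rm bulk}$ and $f^{T_j}_{\rm surf}$ equal the corresponding $f'$ and $f''$ built from $F^{T_j}_{\e_n}$. Remark \ref{rmk: truncated} allows replacing $\sup_{s>0}$ by $s=T_j$, which gives
\[
f^{T_j}_{\rm bulk}(x,L)=\limsup_{r\to0^+}\liminf_{n\to+\infty}\frac{\m_{T_j}^{F^{T_j}_{\e_n}}(L,Q(x,r))}{r^d},
\]
and the same formula holds with $\limsup_n$, and analogously for the surface density.

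It then remains to show $f_{\rm bulk}=\lim_j f^{T_j}_{\rm bulk}$ pointwise, and likewise for the surface density.

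\emph{Step 3: pointwise convergence of densities, the main obstacle.} Monotonicity in $j$ is immediate: $\m^{F^{T_j}}\le \m^{F^{T_{j+1}}}\le \m^F$ for every competitor, so $\lim_j f^{T_j}_{\rm bulk}$ exists and is at most $f_{\rm bulk}$. The difficulty is the reverse inequality, because it requires interchanging $\lim_j$ with $\limsup_{r}$ and the infimum.

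I would proceed as follows. For fixed $x$ and $L$, let $u_j$ be an almost minimiser for $\m^{F^{T_j}}(L(\cdot-x),Q(x,r))$ extended by $L(\cdot-x)$. Truncate it to obtain an $L^\infty$ bound of order $|L|r$, use the monotonicity property $(f)$, and note the uniform energy bound coming from the lower bound $c_1{\rm MS}_1\le F^{T_j}$. By GSBV compactness, $u_j$ converges to some $u$ with the same boundary datum. Lower semicontinuity, together with $F^{T_j}\le F^{T_k}$ for $j\ge k$, gives $F^{T_k}(u,Q)\le\liminf_j \m^{F^{T_j}}$ for every $k$, and Proposition \ref{proposition: truncated functionals} then yields $F(u,Q)\le \lim_j \m^{F^{T_j}}(L,Q(x,r))$. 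Hence $\m^F=\lim_j \m^{F^{T_j}}$ for each cube.

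To pass through $\limsup_r$, I would make this quantitative. Lemma \ref{lemma F<FT} gives $F_{\e_n}\le F^{T}_{\e_n}+C^*\delta$ on bounded competitors. Its constant depends on $\|u\|_\infty$, which is of order $r$, and on $|Q(x,r)|$, so it should scale like $r^d$ for bulk cubes and like $r^{d-1}$ for surface cubes. The resulting estimate $\m^F(Q(x,r))\le \m^{F^{T}}(Q(x,r))+C\delta r^d$, uniform in $r$ for $T>T^*(\delta)$, then gives $f_{\rm bulk}\le f^{T}_{\rm bulk}+C\delta$. The surface case is analogous. The step I would check most carefully is this scaling of $C^*$ in $r$.
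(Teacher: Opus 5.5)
Your Steps 1 and 2 are the paper's own argument: Proposition \ref{proposition: truncated functionals} gives the $\Gamma$-convergence, and Proposition \ref{prop: necessary} with Remark \ref{rmk: truncated} gives the cell formulas for $f^{T_j}_{\rm bulk}$ and $f^{T_j}_{\rm surf}$. The paper then obtains $f_{\rm bulk}=\lim_j f^{T_j}_{\rm bulk}$ and $f_{\rm surf}=\lim_j f^{T_j}_{\rm surf}$ in one line, from monotonicity in $j$.

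You are right that a pointwise identification requires exchanging $\lim_j$ with $\limsup_{r\to0^+}$. However, your quantitative Step 3 fails for the surface density, exactly at the scaling you flagged.

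\textbf{The gap.} The constant $C^*$ in Lemma \ref{lemma F<FT} is not controlled by $\|u_n\|_{L^\infty}$ and $|U|$ alone. Its proof passes through Corollary \ref{corollary : short range}, whose constant $C_3(r_0,U)$ depends on $U$ and multiplies $\|u_n\|^2_{L^2(U;\Rm)}$. This term comes from the cut-offs $\phi_i$ of Theorem \ref{extension theorem}. On a cube of side $r$ they satisfy $|\nabla\phi_i|\sim 1/r$, so the term contributes roughly $\delta\, r^{-2}\|u_n\|^2_{L^2(Q;\Rm)}$.
\begin{itemize}
\item On bulk cubes $\|u_n\|_{L^\infty}\lesssim|L|r$, so this is $O(\delta r^d)$. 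Your bulk argument works once the constants in front of $\|u\|^p_{L^p}$ and of $A^r_\e$ are tracked separately and $|\Omega|$ is replaced by $|U|$.
\item On surface cubes $\|u_n\|_{L^\infty}$ is of order $|\zeta|$, not $r$. The term is then in general of order $\delta|\zeta|^2r^{d-2}$. For instance, competitors close to $u_{x,\zeta,\nu}$ differ from every constant by $\sim|\zeta|$ on a set of measure $\sim r^d$, so subtracting a constant does not help.
\end{itemize}
Dividing by $r^{d-1}$ leaves an error $C\delta|\zeta|^2/r$, which blows up as $r\to0^+$ at fixed $\delta$. If instead $\delta$ depends on $r$, then $T^*(\delta)$, and hence $j$, depends on $r$. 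That produces a diagonal limit, not $\lim_j\limsup_r$. So the surface identity is not established.

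\textbf{A fix.} On cubes, replace Corollary \ref{corollary : short range} by a cut-off-free, scale-invariant bound. Let $Q$ be a cube of side $\ell$, $\widehat Q$ the concentric cube of side $3\ell$, and $\Pi\colon\widehat Q\to Q$ the folding map given by reflections across the faces of $Q$. Then $\Pi$ is $1$-Lipschitz and an isometry on each of the $3^d$ subcubes, so $A^{r_0}_\e(u\circ\Pi,\widehat Q)\le 3^{2d}A^{r_0}_\e(u,Q)$. Apply Lemma \ref{lemma: short range interactions} with $W=\widehat Q$, $V=Q_\e(\xi)$, $v=u\circ\Pi$, noting that $Q_\e(\xi)+(0,\e)\xi\subset Q$ by convexity. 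This gives
\[
\int_{Q_\e(\xi)} a_\e\Bigl(\frac{u(x+\e\xi)-u(x)}{\e}\Bigr)\,\dx x\le 3^{2d}C_2(r_0)\,(|\xi|^p+1)\,A^{r_0}_\e(u,Q)\qquad\text{whenever } \e r_0<\ell,
\]
with no $L^p$ term and a constant independent of $\ell$.

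Insert this bound in the proof of Lemma \ref{lemma F<FT}. Use also $c_0G^{r_0}_\e\le F^{T}_\e$ for $T>r_0$ and Proposition \ref{prop: interpolation}. A truncated recovery sequence for $F^{T_j}(u,Q)$ then yields, by locality,
\[
F(u,Q)\le F^{T_j}(u,Q)+C\delta\Bigl[\bigl(1+\|u\|_{L^\infty(Q;\Rm)}\bigr)F^{T_j}(u,Q)+\bigl(|Q|\,F^{T_j}(u,Q)\bigr)^{1/2}+|Q|\Bigr]
\]
for $T_j>T^*(\delta)$, with $C$ and $T^*(\delta)$ independent of $Q$.

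Test this with truncated almost minimisers of $\m^{F^{T_j}}$:
\begin{itemize}
\item on $Q(x,r)$, where $F^{T_j}(u,Q)\lesssim r^d$ and $\|u\|_{L^\infty}\lesssim|L|r$;
\item on $Q^\nu(x,r)$, where $F^{T_j}(u,Q)\lesssim r^{d-1}$ and $\|u\|_{L^\infty}\lesssim|\zeta|$.
\end{itemize}
You get $f_{\rm bulk}\le f^{T_j}_{\rm bulk}+C\delta$ and $f_{\rm surf}\le f^{T_j}_{\rm surf}+C\delta$ for all large $j$, which completes Step 3.
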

\begin{proof}
    The first part of the statement follows by Proposition \ref{proposition: truncated functionals}. Since $\{f^{T_j}_{\rm bulk}\}_j$ and $\{f^{T_j}_{\rm surf}\}_j$ are sequences of monotonically increasing functions, we have that
    \begin{equation}\label{eq: auxiliary infima 1}
       f_{\rm bulk} = \lim_{j\to+\infty} f^{T_j}_{\rm bulk}, \qquad f_{\rm surf} = \lim_{j\to+\infty} f^{T_j}_{\rm surf}.
    \end{equation}
    Moreover, applying Proposition \ref{prop: necessary} and Remark \ref{rmk: truncated}, we obtain
     \begin{equation}\label{eq: auxiliary infima 2}
     f^{T_j}_{{\rm bulk}}(x,L) = \limsup_{r\to 0^+}\liminf_{n\to+\infty}\frac{\m_{T_j}^{F^{T_j}_{\e_n}}( L, Q(x,r))}{r^d} =  \limsup_{r\to 0^+}\limsup_{n\to+\infty}\frac{\m_{T_j}^{F^{T_j}_{\e_n}}( L, Q(x,r))}{r^d}
 \end{equation} 
  and 
 \begin{equation}\label{eq: auxiliary infima 3}
     f_{\rm surf}^{T_j}(x,\zeta,\nu) =\limsup_{r\to 0^+}\liminf_{n\to+\infty}\frac{\m_{T_j}^{F^{T_j}_{\e_n}}(u_{x,\zeta,\nu}, Q^\nu(x,r))}{r^{d-1}} = \limsup_{r\to 0^+}\limsup_{n\to+\infty}\frac{\m_{T_j}^{F^{T_j}_{\e_n}}(u_{x,\zeta,\nu}, Q^\nu(x,r))}{r^{d-1}}
 \end{equation}
for all $x\in\Omega$, $L\in\Rmd$, $\zeta\in\Rm$, and $\nu\in\Sd$. Therefore, the thesis follows by \eqref{eq: auxiliary infima 1}, \eqref{eq: auxiliary infima 2}, and \eqref{eq: auxiliary infima 3}.
\end{proof}

\section*{Acknowledgements}

The authors wish to thank Andrea Braides for having proposed the problem during his course {\em \lq\lq Concentration problems for nonlocal energies\rq\rq} held at SISSA.  The authors are members of Gruppo Nazionale per l'Analisi Matematica, la Probabilità e le loro Applicazioni (GNAMPA) of Istituto Nazionale di Alta Matematica (INdAM).  Davide Donati gratefully acknowledges the support of the GNAMPA Project ``Asymptotic analysis of nonlocal variational problems'' funded by INdAM, and thanks Roberto Alicandro and Chiara Leone for useful discussions on the topic of this paper.

\bibliographystyle{siam}
\bibliography{bibliography}
\end{document}